\newcommand{\LeftEqNo}{\let\veqno\@@leqno}
\newtheoremstyle{break}
  {}
  {}
  {\itshape}
  {}
  {\bfseries}
  {.}
  {\newline}
  {}
\theoremstyle{break}
\newtheorem{The}{Theorem}%
[section]
\theoremstyle{plain}
\newtheorem{Pro}[The]{Proposition}
\newtheorem{Cor}[The]{Corollary}
\newtheorem{Lem}[The]{Lemma}
\theoremstyle{definition}
\newcommand{\aut}[1]{\operatorname{Aut}\left({#1}\right)}
\newcommand{\frattini}{\Phi}
\newcommand{\Z}{\operatorname{Z}}
\def\setof#1#2{\{#1\,\mid\,#2\}}
\newcommand{\disp}{displacement}
\newcommand{\leftm}{left multiplication}
\newcommand{\lmlt}[1]{\mathrm{LMlt}(#1)}
\newcommand{\dis}{\mathrm{Dis}}
\def\c#1{\mathrm{con}_{#1}}
\def\Q{\mathcal{Q}_{\mathrm{Hom}}}
\def\aff#1{\mathrm{Aff}#1}
\newtheorem{example}[The]{Example}
\newtheorem{remark}[The]{Remark}
\newcommand{\Lm}[1]{L_{#1}}
\def\Z{Z(\dis(Q))}
\def\ldiv{\backslash}
\def\inv{^{-1}}
\newcommand{\BlocS}[2]{\dis(#1)_{[#2]}}
\begin{document}

\title{On connected quandles of prime power order}

\author{Giuliano Bianco, Marco Bonatto}
\date{v\today}



\begin{abstract}
We develop some general ideas to study connected quandles of prime power size and we classify non-affine connected quandles of size $p^3$ for $p>3$, using a combination of group theoretical and universal algebraic tools. As a byproduct we obtain a classification of Bruck loops of the same size and that they are in one-to-one correspondence with commutative automorphic loops.
\end{abstract}

\maketitle

\section*{Introduction}

Quandles are binary algebras arising in the context of knot theory \cite{Joyce1, Mat}, Hopf algebras \cite{AG2003} and the set theoretic solution to the Yang-Baxter equation \cite{EGS,ESS}.

Some families of finite connected quandles have been classified: connected quandles of size $p$ \cite{EGS}, of size $p^2$ \cite{Grana_p2}, affine quandles of size up to $p^4$ \cite{Hou} and quandles with doubly transitive left multiplication group \cite{Vendra}. Moreover a list of connected quandles of size less or equal to $47$ can be found in the RIG library on GAP \cite{RIG}. The classification strategies used in these papers make use either of the strong interplay between connected quandles and transitive groups, in particular using the minimal {\it coset representation} over the \disp\, group \cite{hsv}  (i.e. a representation of the quandle parametrized by a pair $(G,f)$ where $G$ is a group and $f\in \aut{G}$, see Example \ref{coset quandles}), or the representation of affine quandles as modules over the Laurent series with integer coefficients.
We will integrate this two approaches with a new viewpoint coming from universal algebra, which reveals another link between the structure of a quandle and the structure of its \disp\, group, namely the existence of a Galois connection between the congruence lattice of a quandle and certain subgroups of the \disp\, group (see Section \ref{Sec:Galois}). We are also going to use the commutator theory for quandles developed in \cite{CP}, in which commutator theory for general algebras started by J.D. Smith, Freese and McKenzie \cite{comm} is adapted to the framework of quandles (see Section \ref{Sec:commutator}). In \cite{CP} the definition of nilpotent quandles is provided wich, as it is customary with groups, can be described in terms of the existence of a lower central series of congruences
\begin{displaymath}
1_Q=\gamma_0\leq \gamma_1(Q)\leq\ldots \leq \gamma_n(Q)=0_Q
\end{displaymath} 
where $0_Q$ and $1_Q$ are respectively the smallest and the biggest congruence of $Q$ and $\gamma_{i+1}(Q)=[\gamma_{i}(Q),1_Q]$, where $[\cdot,\cdot]$ denotes the commutator of congruences defined in \cite{comm}. In such case $Q$ is said to be {\it nilpotent of length $n$}. According to this characterization, nilpotence of a quandle is reflected by nilpotence of its displacement group, see \cite[Section 6]{CP}. With a nice analogy to what happens with groups, it is proved that prime power size connected quandles are nilpotent as a byproduct of Corollary 5.2 of \cite{GiuThe}. Moreover, according to Proposition 6.5 of \cite{CP}, nilpotent latin quandles are direct product of quandles of prime power size, so one can focus on the prime power size case. 

In Section \ref{preliminary} we collect some basic facts about quandles, we recall the contents of \cite{CP} useful in the present work and we prove a characterization of the congruence $\gamma_1(Q)$ (Proposition \ref{gamma_1}). In Section \ref{derived} we focus on the role of some particular subgroups of the \disp\, group as the first and second element of the lower central series and the Frattini subgroup and how they are related to the structure of the quandle. In Section \ref{central} we investigate central congruences and nilpotent quandles of length $2$. In Section \ref{p-racks} we apply our tools to re-obtain a characterization of connected quandles of size $p$ and $p^2$ in the contest of the techniques here developed.

In the Section \ref{cube order} we obtain a classification of non-affine connected quandles of size $p^3$ with $p> 3$, using the following strategy: first we show that they are nilpotent of length $2$ and then we find a bound on the size of the \disp\, group in order to exploit the minimal coset representation. Then we give necessary conditions for pairs $(G,f)$ to give the desired minimal representation and we show that isomorphism classes of quandles over a fixed group $G$ correspond to conjugacy classes of its automorphisms. Finally, taking advantage of the classification of groups of size up to $p^n$ with $n\leq 4$ and the characterization of their automorphisms available in \cite{tedesco} we get a list of representatives of isomorphism classes collected in tables \ref{Tab2} and \ref{Tab1}. For all the (technical) computations of the relevant conjugacy classes we refer to the Appendix \ref{appendix}.

We finish this section with an application to Loop theory. Indeed, as a byproduct, we obtain also a classification of Bruck Loops of order $p^3$, since they correspond to involutory latin quandles of the same size. Using the enumeration of commutative automorphic loops of size $p^3$ we have that they are in one-to-one correspondence with Bruck Loops of the same size (providing an answer to Problem 8.1 in \cite{Petr2} for every prime $p$ and $k=3$).

In Section \ref{bound for latin} we compute a bound for the size of the \disp \, group of latin quandles of arbitrary prime power size as a first step of the same strategy we used in the present work applied to a more general case.

\section{Preliminary results}\label{preliminary}

A \textit{rack} is a pair $(Q,\ast)$ where $Q$ is a set and $\ast$ is a binary operation and
\begin{itemize}
    \item[(i)] the left multiplication $\Lm{a} : Q \to Q,$ mapping $ b \mapsto a \ast b$ is a bijection for every $a\in Q$;
    \item[(ii)] $a\ast (b\ast c)=(a\ast b)\ast (a\ast c)$ for every $a,b,c\in Q$.
\end{itemize}
A \textit{quandle} is an idempotent rack, i.e. $a* a=a$ for every $a\in Q$. Equivalently a rack is a left distributive left quasigroup, i.e. a binary algebra $(Q,*,\ldiv)$ such that $a*(a\ldiv b)=a\ldiv (a*b)=b$ for every $a\in Q$ and the left multiplication mappings are automorphisms.

\begin{example}
Let $G$ be a group and $H\subseteq G$ be closed under conjugation. Then $(H,*)$ with $h*g=h g h \inv$ is a quandle.
\end{example}

\begin{example}\label{coset quandles}
Let $G$ be a group, $g\in \operatorname{Aut}{G}$ and $H\leq Fix(f)=\setof{g\in G}{f(g)=g}$, the set $G/H$ endowed with the operation:
\begin{displaymath}
aH\ast bH=af(a^{-1}b)H,
\end{displaymath}
is a quandle and it is denoted by $\Q(G,H,f)$ and called \textit{coset quandle}. If $H=1$, then $Q$ is called \textit{principal} and denoted by $Q=\Q(G,f)$. If $G$ is Abelian, $Q$ is called \textit{affine} and denoted by $\aff(G,f)$ and the quandle operation becomes
\begin{displaymath}
a\ast b=(1-f)(a)+f(b)
\end{displaymath}
for every $a,b\in Q$.
\end{example}

$\Lm{a}$ and $R_a$ will be the left and right multiplications, respectively, for an element $a$ and the group generated by the left multiplications is denoted by $\lmlt{Q}$ and called 
\emph{\leftm} group. The group generated by $\{\Lm{a}\Lm{b}^{-1}, \, a,b\in Q\}$ is denoted by $\dis(Q)$ and called \emph{\disp} group (by some authors these two groups are called the Inner Automorphisms group and the Transvection group respectively). If $\aut{Q}$ is transitive $Q$ is called \textit{homogeneous}, if $\lmlt{Q}$ is transitive $Q$ is called \textit{connected}.
The construction defined in Example \ref{coset quandles} characterizes homogeneous quandles \cite[Theorem 7.1]{Joyce1}.  
In particular the \disp\, group provides a minimal coset representation for connected quandles as 
\begin{equation}\label{min rep}
Q \cong \Q(\dis(Q),\dis(Q)_a,\widehat{\Lm{a}}),
\end{equation}
where $\dis(Q)_a$ is the point-wise stabilizer of $a$ in $\dis(Q)$ and $\widehat{\Lm{a}}$ is the inner automorphism of $\Lm{a}$ \cite[Theorem 4.1]{hsv}. 

An equivalence relation on the elements of a rack is a \textit{congruence} if it is compatible with the algebraic structure of the rack. More precisely, if $Q$ is a rack, an equivalence relation $\alpha$ is a congruence of $Q$ whenever $a \, \alpha \, b$ and $c\, \alpha \, d$ implies $ \left(a\ast c \right) \, \alpha \, \left( b\ast d\right)$ and $ \left(a\ldiv c \right) \, \alpha \, \left( b\ldiv d\right)$. We denote by $[a]_{\alpha}$ the class of $a$ (we usually drop the subscript $\alpha$ whenever there is no risk of confusion) and by $Q/\alpha$ the quotient set. Congruences are in one-to-one correspondence with kernel of homomorphisms, where the kernel of a map $f:Q\to R$ is the equivalence relation given by $ker(f)=\setof{(a,b)\in Q\times Q}{f(a)=f(b)}$. The congruences of a quandle $Q$ form a lattice with largest element $1_Q =Q\times Q$ and smallest element $0_Q=\{(a,a);\;a\in Q\}$. This lattice will be denoted by $Con(Q)$. The congruence lattice of $Q/\alpha$ is given by $\setof{\beta/\alpha}{\alpha\leq \beta\in Con(Q)}$.

Let $Q$ be a rack and $\alpha$ a congruence of $Q$. The mapping

\begin{displaymath}
\pi_{\alpha}:\lmlt {Q}\longrightarrow \lmlt{Q/\alpha},\quad \Lm{a_1}\ldots \Lm{a_n} \mapsto \Lm{[a_1]}\ldots \Lm{[a_n]}
\end{displaymath}
is well defined and is a surjective homomorphism of groups (see \cite[Lemma 1.8]{AG2003}). The restriction of $\pi_\alpha$ to $\dis (Q)$ gives a surjective homomorphism $\dis (Q)\to\dis(Q/\alpha)$. Its kernel will be denoted by $\dis^\alpha$ and it coincides with the set 
\begin{equation}\label{kernel}
\dis^\alpha=\setof{h\in \dis(Q)}{h([a])=[a], \text{ for every }a\in Q}.
\end{equation}

Another subgroup of the \disp\; group relevant to our discussion is the following: 
\begin{equation}\label{relative dis}
\dis_\alpha=\langle \setof{L_aL_b^{-1}}{ a\, \alpha\, b}\rangle.
\end{equation}
which can be somehow be described as the \disp\; group relative to $\alpha$.

The set-wise block stabilizer
\begin{equation}\label{block_stab}
\dis(Q)_{[a]_\alpha}=\setof{ h\in \dis(Q)} { h([a]_\alpha)=[a]_\alpha}
\end{equation}
provides the following useful criterion for connectedness where $\lmlt{Q}_{[a]_\alpha}$ is defined analogically to 
\eqref{block_stab} (a similar statement is implicit in \cite{AG2003} at page 196): 
\begin{Pro}\label{connected ext}
Let $Q$ be a rack (resp. a quandle), $\alpha$ be its congruence and $a\in Q$. Then $Q$ is connected if and only if $Q/\alpha$ is connected and $\lmlt{Q}_{[a]_\alpha}$ (resp. $\dis(Q)_{[a]_\alpha}$) is transitive on $[a]_{\alpha}$.
\end{Pro}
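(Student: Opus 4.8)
The plan is to prove both implications directly, by pushing elements of $\lmlt Q$ (resp.\ of $\dis(Q)$) back and forth between $Q$ and $Q/\alpha$, using two elementary facts already available above. First, every $g\in\lmlt Q$ permutes the $\alpha$-classes via $g\cdot[x]_\alpha=[g(x)]_\alpha$: this is precisely the well-definedness of the homomorphism $\pi_\alpha\colon\lmlt Q\to\lmlt{Q/\alpha}$ (Lemma~1.8 of \cite{AG2003}), and it restricts to $\dis(Q)$. Second, in the quandle case, $Q$ is connected if and only if $\dis(Q)$ is transitive: indeed $\lmlt Q=\langle\dis(Q),L_a\rangle$ with $\dis(Q)$ normal, and since $L_a(a)=a$ by idempotency, the $\lmlt Q$-orbit of $a$ and the $\dis(Q)$-orbit of $a$ coincide. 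Below I write the argument once, reading $\lmlt Q$ in the rack case and $\dis(Q)$ in the quandle case.

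For the forward implication, assume $Q$ is connected (so $\dis(Q)$ is transitive, in the quandle case). Connectedness of $Q/\alpha$ is then immediate, since $\pi_\alpha$ is surjective and intertwines the two actions, and a surjective equivariant image of a transitive action is transitive. For the block stabilizer: given $b,c\in[a]_\alpha$, choose $g$ with $g(b)=c$; since $g$ permutes $\alpha$-classes and sends the class of $b$ to the class of $c$, and both these classes equal $[a]_\alpha$, we get $g([a]_\alpha)=[a]_\alpha$, i.e.\ $g\in\lmlt Q_{[a]_\alpha}$ (resp.\ $g\in\dis(Q)_{[a]_\alpha}$), so this $g$ witnesses transitivity on $[a]_\alpha$.

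For the converse, assume $Q/\alpha$ is connected and the relevant block stabilizer is transitive on $[a]_\alpha$; fix an arbitrary $b\in Q$. Since $Q/\alpha$ is connected (so $\lmlt{Q/\alpha}$, resp.\ $\dis(Q/\alpha)$, is transitive) and $\pi_\alpha$ restricted to $\lmlt Q$ (resp.\ to $\dis(Q)$) surjects onto $\lmlt{Q/\alpha}$ (resp.\ onto $\dis(Q/\alpha)$), I can pick $h\in\lmlt Q$ (resp.\ $h\in\dis(Q)$) with $[h(a)]_\alpha=[b]_\alpha$, equivalently $h([a]_\alpha)=[b]_\alpha$, hence $h^{-1}(b)\in[a]_\alpha$. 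By the transitivity hypothesis inside $[a]_\alpha$ there is $k\in\lmlt Q_{[a]_\alpha}$ (resp.\ $k\in\dis(Q)_{[a]_\alpha}$) with $k(a)=h^{-1}(b)$; then $hk$ lies in $\lmlt Q$ (resp.\ in $\dis(Q)$) and $hk(a)=b$. As $b$ was arbitrary, $Q$ is connected.

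I do not anticipate a real obstacle: the proof is a short element chase. The only points that need care are the bookkeeping that inner automorphisms respect the congruence $\alpha$ (so that $\lmlt Q_{[a]_\alpha}$ is meaningful and $h^{-1}$ carries $[b]_\alpha$ back to $[a]_\alpha$), and, for the $\dis$-version, keeping straight the equivalence between connectedness and transitivity of $\dis$ for quandles and the surjectivity of $\pi_\alpha|_{\dis(Q)}$ onto $\dis(Q/\alpha)$ — all of which are recalled in the material above. If one prefers, the quandle case can also be deduced formally from the rack case once the $\dis$-vs-$\lmlt$ dictionary is in place.
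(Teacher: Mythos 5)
Your proof is correct and follows essentially the same element-chasing argument as the paper: the forward direction uses that left multiplications permute $\alpha$-classes so any $g$ moving one point of $[a]_\alpha$ to another already stabilizes the block, and the converse composes a lift $h$ of an element of $\lmlt{Q/\alpha}$ with an element $k$ of the block stabilizer to reach an arbitrary $b$. The reduction of the quandle case to transitivity of $\dis(Q)$ via coincidence of orbits is also exactly the paper's closing remark.
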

\begin{proof}
If $Q$ is a connected rack then $Q/\alpha$ is connected \cite[page 184]{AG2003} and for every $b\in [a]$ there $h\in \lmlt{Q}$ such that $b=h(a)$. The blocks of $\alpha$ are blocks with respect to the action of $\lmlt{Q}$, then whenever $c\, \alpha\, a$ then $h(c)\,\alpha\, h(a)=b\, \alpha \, a$. Therefore $h\in \lmlt{Q}_{[a]}$ and the block stabilizer is indeed transitive on the block $[a]$. On the other hand if $Q/\alpha$ is connected then for every $b\in Q$ there exists $\pi_\alpha(h)\in \lmlt{Q/\alpha}$ such that $[b]=\pi_\alpha(h)([a])$. Then $	h(a)\, \alpha	\, b$ and $a\, \alpha\, h^{-1}\left( b \right) $. Since the block stabilizer is transitive on $[a]$ we have that $h^{-1}\left( b \right)=g(a)$ for some $g\in \lmlt{Q}_{[a]}$ which implies $b=hg\left( a \right)$. Hence $Q$ is connected. The statement for quandles follows from the fact that in their case $\dis(Q)$ and $\lmlt{Q}$ have the same orbits.
\end{proof}

The stabilizer of $a\in Q$ in $\dis(Q)$ is a subgroup of the block stabilizer \eqref{block_stab}. Indeed, $h\in \dis(Q)_a$, then $h(b) \, \alpha \, h(a)=a$ whenever $b\, \alpha \, a$. So 
\begin{equation}\label{stabiliser in block stabiliser}
\dis(Q)_a\leq \BlocS{Q}{a}. 
\end{equation}
Moreover we have:
\begin{equation}\label{i sottogruppi}
\dis_\alpha\leq \dis^\alpha\leq \dis(Q)_{[a]}.
\end{equation}

Note that $L(Q)=\setof{\Lm{a}}{a\in Q}$ is a subset of $\lmlt{Q}$ closed under conjugation, this follows from the fact that $\widehat{\Lm{a}}\left(\Lm{b}\right)=\Lm{a} \Lm{b} \Lm{a}^{-1}=L_{a\ast b}$, and as such it has a natural quandle structure. The mapping $L_Q:Q\to L(Q)$ is an epimorphism of quandles with kernel denoted by $\lambda_Q$, i.e. $a\,\lambda_Q\, b$ if and only if $\Lm{a}= \Lm{b}$. A quandle is called {\it faithful} if $\lambda_Q=0_Q$ i.e. $a = b$ if and only if $\Lm{a}= \Lm{b}$, in this case $\dis(Q)_a=Fix(\widehat{\Lm{a}})$ for every $a\in Q$. A quandle is called {\it latin} if right-multiplications $R_a:b\mapsto b\ast a$ are bijections.  Note that latin quandles are connected and faithful.

\begin{Lem}\label{facts}
The following easy and well know facts will be freely used in the rest of the paper:
\begin{itemize}
\item[(i)] If $Q$ is a finite connected quandle then $|Q|=|Q/\alpha||[a]|$ for every congruence $\alpha$ and every $a\in Q$ \cite[Proposition 2.5]{CP}.
\item[(ii)] A connected quandle $Q$ is principal if and only if $\dis(Q)_a=1$ for every $a\in Q$ and a finite principal quandle $\Q(G,f)$ is latin if and only if $Fix(f)=1$, \cite[Section 2.1]{Principal}. Finite affine connected quandles are latin \cite[Section 2]{hsv}.
\end{itemize}
\end{Lem}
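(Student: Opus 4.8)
The plan is to prove the three assertions separately; all are elementary once one combines the minimal coset representation \eqref{min rep} with a direct computation of left and right multiplications in coset quandles, so the argument stays short (and in any event the statements already appear in \cite{CP}, \cite{Principal} and \cite{hsv}).

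For (i) I would argue purely group-theoretically. Each $\Lm{a}$ is an automorphism of $Q$, so $\lmlt{Q}$ preserves the congruence $\alpha$ and permutes its classes; since distinct classes are disjoint, the partition of $Q$ into $\alpha$-classes is a system of imprimitivity for the action of $\lmlt{Q}$ on $Q$. As $Q$ is connected this action is transitive, hence also transitive on the set of $\alpha$-classes, so all classes have a common cardinality $m$. Counting points then gives $|Q|=|Q/\alpha|\cdot m=|Q/\alpha|\cdot|[a]_\alpha|$ for every $a$, which is \cite[Proposition 2.5]{CP}.

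For the first part of (ii), the implication ``$\dis(Q)_a=1$ for every $a$ implies $Q$ principal'' is immediate, since \eqref{min rep} then reads $Q\cong\Q(\dis(Q),1,\widehat{\Lm{a}})=\Q(\dis(Q),\widehat{\Lm{a}})$. For the converse I would work inside a coset quandle $\Q(G,f)$, writing $\mu_g$ for the left translation $x\mapsto gx$ of $G$: one checks that $\Lm{a}=\mu_a f\mu_a^{-1}$ as permutations of $G$, whence $\Lm{a}\Lm{b}^{-1}=\mu_{af(a^{-1}b)b^{-1}}$, so $\dis(\Q(G,f))$ is contained in the regular group of left translations of $G$, acts freely, and has trivial point stabilizers; transporting this through an isomorphism $Q\cong\Q(G,f)$ settles the converse (the two quantifier conventions agree because the stabilizers $\dis(Q)_a$ of a connected quandle are all conjugate in $\dis(Q)$). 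For the second part, still in $\Q(G,f)$, I would note that $R_a(b)=R_a(c)$ is equivalent, after cancelling $f(a)$ on the right, to $bf(b)^{-1}=cf(c)^{-1}$, i.e. to $c^{-1}b\in Fix(f)$; finiteness of $G$ then makes every $R_a$ bijective exactly when $Fix(f)=1$. Finally, for affine connected quandles I would observe that in $\aff(G,f)$ each generator $\Lm{a}\Lm{b}^{-1}$ is the translation by $(1-f)(a-b)$, so $\dis(\aff(G,f))$ is the group of translations by the subgroup $(1-f)(G)$; since $\dis$ and $\lmlt{Q}$ have the same orbits, connectedness forces $(1-f)(G)=G$, that is $1-f$ bijective and $Fix(f)=\Ker(1-f)=1$, so that $\aff(G,f)$ is latin by the previous paragraph.

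The computations are all routine; the only step with any real content is the identification of $\dis(\Q(G,f))$ with a group of translations of $G$, which underlies the ``principal implies free action'' implication, and keeping track of the ``for every $a$'' quantifiers, which the conjugacy remark handles. I therefore do not anticipate a genuine obstacle.
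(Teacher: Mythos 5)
Your proofs are correct. The paper itself offers no argument for this lemma (it only cites \cite{CP}, \cite{Principal} and \cite{hsv}), and what you supply is exactly the standard reasoning behind those citations: the $\alpha$-classes as a block system of the transitive group $\lmlt{Q}$ for (i), the identification $\Lm{a}=\mu_a f\mu_a^{-1}$ showing $\dis(\Q(G,f))$ sits inside the regular left-translation group for the principality criterion, and injectivity of $\partial f$ (equivalently $Fix(f)=1$, equivalently $1-f$ bijective in the affine case) for latinness.
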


For further definitions and basic results on quandles we refer the reader to \cite{AG2003, Joyce1, hsv}. 

\subsection*{Congruences and subgroups of the \disp\, group}\label{Sec:Galois}
There is a close relationship between the congruence lattice of a rack $Q$ and the lattice of normal subgroups of its left multiplication group which are contained in the displacement subgroup (we denote this last lattice of subgroups by $Norm(Q)$). It is a well known fact that $\lmlt{Q}=\dis(Q)\langle L_a\rangle$ for every $a\in Q$ and so it is easy to see that the elements of $Norm(Q)$ are the normal subgroups of $\dis(Q)$ invariant under $\widehat{\Lm{a}}$.

For every $N \in Norm(Q)$ we can build the congruence 
$$\c N=\setof{(a,b)}{\Lm{a}\Lm{b}^{-1}\in N }\in Con(Q).$$
On the other hand to every $\alpha\in Con(Q)$ we can associate the subgroup $\dis_\alpha\in Norm(Q)$ defined in \eqref{relative dis}. The pair of mappings $(\dis,\c{})$ is a monotone Galois connection (see \cite[Section 3.3]{CP} for more details). 

Another congruence relevant to the present work is the orbit decomposition of $Q$ with respect to the action of $N\in Norm(Q)$ as
\begin{displaymath}
a\,\mathcal{O}_N b\,\text{ if and only if there exists }\, n \in N \, \text{such that }\, n\left(a\right)=b.
\end{displaymath}
Such equivalence relation is indeed a congruence and it will be denoted by $\mathcal{O}_N$ \cite[Section 2.3]{CP}.

The interplay between the two lattices $Con(Q)$ and $Norm(Q)$ will be useful to extract information about the structure of the displacement group, given information about the congruence lattice and vice versa. 

For the reader's convenience let's state some simple facts about $Con(Q)$ and $Norm(Q)$ that will be freely used in developing the arguments. Let $\alpha\in Con(Q)$ and $N\in Norm(Q)$, then:

\begin{eqnarray}
& &\mathcal{O}_N \leq \c{N}, \\
& & \dis_{\c N} \leq	N\leq \dis^{\mathcal{O}_N}\leq \dis^{\c N},\label{N in O_N}\\
&& \mathcal{O}_{\dis^\alpha}\leq \alpha .\label{orbits of kernel}
\end{eqnarray}


\subsection*{Commutator theory for racks}\label{Sec:commutator}
Congruences of groups are in one to one correspondence with normal subgroups, and one can define the notion of commutator between two (normal) subgroups which can be understood as a binary operation on the lattice of (normal) subgroups. In analogy, one can define the commutator of two congruences (denoted by brackets $[\cdot,\cdot]$) and consequently the notion of abelianness and centrality of congruences for general algebras (in the sense of universal algebra) \cite{comm}. We will state such definitions directly in the setting of quandles.
%

Let $Q$ be a quandle. A congruence $\alpha$ is abelian if $[\alpha,\alpha]=0_Q$ and $Q$ is called \emph{abelian} if the congruence $1_Q$ is abelian, i.e. $[1_Q,1_Q]=0_Q$.

A congruence $\alpha$ is \emph{central} if $[\alpha,1_Q]=0_Q$ and the \emph{center} of $Q$, denoted by $\zeta_Q$, is the largest central congruence of $Q$. Equivalently, a quandle is abelian if $\zeta_Q=1_Q$. The congruence $[1_Q,1_Q]$ is the smallest congruence $\delta$ such that $Q/\delta$ is abelian. 

Once you have the notion of a commutator you can define the lower central series in analogy with groups as 
\begin{displaymath}
    \gamma_{0}(Q)=1_Q,\qquad \gamma_{i+1}(Q)=[\gamma_{i}(Q),1_Q].
\end{displaymath}
A quandle $Q$ is called \emph{nilpotent of length $n$}, if $\gamma_{n}(Q)=0_Q$ for some $n$.

In \cite{CP} commutator theory for racks in the sense of \cite{comm} has been studied. The centrality of congruences of racks is completely determined by the properties of $\dis(Q)$ and its subgroups.
\begin{The}[{\cite[Theorem 1.1]{CP}}]
\label{central cong}Let $Q$ be a rack and $\alpha$ be a congruence. The following are equivalent:
\begin{enumerate}
\item[(i)] $\alpha$ is central;
\item[(ii)] $\dis_\alpha $ is central in $\dis(Q)$ and $\dis(Q)_a=\dis(Q)_b$ whenever $a\, \alpha\, b$.
\end{enumerate} 
In particular if $Q$ is connected, $Q$ is abelian if and only if $\dis(Q)$ is abelian.
\end{The}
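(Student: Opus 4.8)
The plan is to read off centrality directly from the term condition of \cite{comm}: $\alpha$ is central precisely when $C(\alpha,1_Q;0_Q)$ holds, i.e.\ for every term (or polynomial) operation $t(\vec w,\vec y)$ of $Q$, every $\alpha$-related pair of tuples $\vec a\mathrel{\alpha}\vec b$, and all $\vec u,\vec v$,
\[
t(\vec a,\vec u)=t(\vec a,\vec v)\ \Longrightarrow\ t(\vec b,\vec u)=t(\vec b,\vec v)
\]
(constants are harmless: a constant may be replaced by a fresh variable in the $1_Q$-block and specialised on all four evaluations). The preliminary step is the normal form for rack terms: using left distributivity together with $\Lm{a\ast b}=\Lm a\Lm b\Lm a\inv$ and $\Lm{a\ldiv b}=\Lm a\inv\Lm b\Lm a$, every term of $Q$ is, as an operation, a word $W$ in the $\Lm{x_i}^{\pm1}$ applied to a single ``bottom'' variable $x_j$ (whose letters may also involve $x_j$). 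I will freely use $\dis(Q)\le\lmlt Q\le\aut Q$, so that $h\Lm ch\inv=\Lm{h(c)}$, that $\dis_\alpha$ is normal in $\dis(Q)$, and that $\dis(Q)=\langle\Lm p\Lm{p'}\inv\rangle$.

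For (i)$\Rightarrow$(ii), each of the two assertions is witnessed by one operation. For the stabiliser condition, given $a\mathrel{\alpha}b$ and $h=\Lm{c_1}\Lm{c_2}\inv\cdots\Lm{c_{2k-1}}\Lm{c_{2k}}\inv\in\dis(Q)_a$, apply the term condition to $t(w;y_1,\dots,y_{2k})=\Lm{y_1}\Lm{y_2}\inv\cdots\Lm{y_{2k}}\inv(w)$ with $\alpha$-block $\{w\}$: at $w=a$ the substitutions $\vec y=(c_1,\dots,c_{2k})$ and $\vec y=(a,\dots,a)$ both yield $a$ (the first since $h$ fixes $a$, the second since the word collapses), so the same must hold at $w=b$, i.e.\ $h(b)=b$; by symmetry $\dis(Q)_a=\dis(Q)_b$. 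For the centrality of $\dis_\alpha$, it suffices that each of its generators $g=\Lm a\Lm b\inv$ (with $a\mathrel{\alpha}b$) commute with every $\Lm p\Lm{p'}\inv$; apply the term condition to
\[
t(w;p,x)=(p\ldiv a)\ast\bigl((p\ldiv w)\ldiv x\bigr)=\Lm p\inv\Lm a\Lm w\inv\Lm p(x),
\]
$\alpha$-block $\{w\}$ with values $a,b$. At $w=a$ one gets $t(a;p,x)=x$, independent of $p$; hence $t(b;p,x)=t(b;p',x)$ for all $p,p'$, i.e.\ $\Lm p\inv g\Lm p=\Lm{p'}\inv g\Lm{p'}$, which rearranges to say $g$ commutes with every $\Lm{p'}\Lm p\inv$, hence with all of $\dis(Q)$.

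For the converse (ii)$\Rightarrow$(i), I would check the term condition on a normal-form term $t=W(x_j)$. Passing from the $\vec a$- to the $\vec b$-substitution replaces certain letters $\Lm{a_i}$ of $W$ by $\Lm{b_i}$; since each $\Lm{a_i}\inv\Lm{b_i}$ is a conjugate of $\Lm{b_i}\Lm{a_i}\inv\in\dis_\alpha\le Z(\dis(Q))$, one slides all these corrections to one end to obtain $W_{\vec b}=W_{\vec a}\cdot z$ for a fixed central $z$ (the same for every substitution into the $1_Q$-block). If $x_j$ lies outside the $\alpha$-block this already gives $t(\vec b,\vec u)=z(t(\vec a,\vec u))$, so the implication follows because $z$ is a bijection; if $x_j$ is in the $\alpha$-block with values $a_j\mathrel{\alpha}b_j$, then $t(\vec a,\vec u)=t(\vec a,\vec v)$ means $W_{\vec a,\vec v}\inv W_{\vec a,\vec u}$ (which lies in $\dis(Q)$, the two words having equal exponent sum) fixes $a_j$, hence fixes $b_j$ by the stabiliser hypothesis, hence fixes $z(b_j)$ because $z$ is central, and that is exactly $t(\vec b,\vec u)=t(\vec b,\vec v)$. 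The ``in particular'' is the case $\alpha=1_Q$: then $\dis_\alpha=\dis(Q)$ and (ii) reads ``$\dis(Q)$ abelian and all its point stabilisers equal'', and for connected $Q$ the group $\dis(Q)$ is transitive, so an abelian $\dis(Q)$ has all point stabilisers equal automatically (and conversely); thus $Q$ is abelian iff $\dis(Q)$ is abelian.

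I expect the main obstacle to be the normal-form reduction and the bookkeeping it enables — namely, that the term condition need only be tested on the two explicit (polynomial) operations above in the forward direction and on words applied to a single variable in the converse, with the genuinely delicate case being when the ``bottom'' variable lies in the $\alpha$-block, where one must simultaneously use normality of $\dis_\alpha$, its centrality, the stabiliser condition, and the exponent-sum fact that the relevant ratios land in $\dis(Q)$. Everything past that is formal manipulation inside $\dis(Q)$.
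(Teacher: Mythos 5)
The paper does not prove this theorem: it is imported verbatim from \cite[Theorem~1.1]{CP}, so there is no internal proof to compare against. Your argument is a genuine self-contained proof from the universal-algebraic definition of centrality via the term condition $C(\alpha,1_Q;0_Q)$, and its core is correct and in the spirit of the source: the canonical form of rack terms as a group word in the $\Lm{x_i}^{\pm1}$ applied to one bottom variable, the two witnessing polynomials for (i)$\Rightarrow$(ii), and the sliding argument for (ii)$\Rightarrow$(i) all work. The one step you assert rather than prove is the decisive one in (ii)$\Rightarrow$(i): that the central correction $z$ in $W_{\vec b}=z\,W_{\vec a}$ can be chosen \emph{independently of the substitution into the $1_Q$-block}. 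This is true, but it needs the sentence you only hint at: the conjugators produced when sliding the corrections $d_k=\Lm{a_{i_k}}^{-\epsilon_k}\Lm{b_{i_k}}^{\epsilon_k}\in\dis_\alpha$ to one end are evaluations of the same prefix word at $\vec u$ and at $\vec v$, hence differ by an exponent-sum-zero element of $\lmlt{Q}$, i.e.\ by an element of $\dis(Q)$, which commutes with $d_k$ since $\dis_\alpha\le Z(\dis(Q))$. Without that, the parenthetical ``(the same for every substitution)'' is precisely the point at issue.

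The one genuine error is in the ``in particular'' clause. You justify ``$\dis(Q)$ abelian $\Rightarrow$ all point stabilizers coincide'' by asserting that $\dis(Q)$ is transitive for connected $Q$. That holds for quandles (where $\dis(Q)$ and $\lmlt{Q}$ have the same orbits) but is false for racks, which is the stated generality: the permutation rack $a\ast b=\sigma(b)$ with $\sigma$ an $n$-cycle is connected while $\dis(Q)=1$. The conclusion survives by a different route: any $h\in\aut{Q}$ fixing $a$ satisfies $h\Lm{a}h^{-1}=\Lm{a}$, and since $\Lm{a\ast a}=\Lm{a}$ one gets $\dis(Q)_a=\dis(Q)_{a\ast a}=\dis(Q)_{\Lm{a}^k(a)}$ for all $k$; combining this with $\lmlt{Q}=\dis(Q)\langle\Lm{a}\rangle$ and the fact that an abelian $\dis(Q)$ gives equal stabilizers within each of its orbits, all $\dis(Q)_b$ coincide. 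With that repair the proof is complete.
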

A characterization of the congruence $\gamma_1(Q)$ is the following:

\begin{Pro}\label{gamma_1}
Let $Q$ be a connected quandle. Then $\gamma_1(Q)=\mathcal{O}_{\gamma_1(\dis(Q))}$.
\end{Pro}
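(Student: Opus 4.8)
The plan is to describe $\gamma_1(Q)$ intrinsically as the least congruence with abelian quotient --- which is exactly the characterization of $[1_Q,1_Q]$ recalled above --- then to translate abelianness of the quotient into a condition on the displacement group of the quotient, and finally to match that condition against the orbit congruence $\mathcal{O}_{\gamma_1(\dis(Q))}$ using only the elementary Galois-type inequalities \eqref{N in O_N} and \eqref{orbits of kernel}.

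First I would record that $N:=\gamma_1(\dis(Q))=[\dis(Q),\dis(Q)]$ is a characteristic subgroup of $\dis(Q)$; since $\dis(Q)\trianglelefteq\lmlt{Q}$, this makes $N$ a normal subgroup of $\lmlt{Q}$ contained in $\dis(Q)$, i.e.\ $N\in Norm(Q)$, so the orbit congruence $\mathcal{O}_N$ is defined. Next, for an arbitrary congruence $\delta$ of $Q$: as $Q$ is connected so is $Q/\delta$, hence by the last assertion of Theorem \ref{central cong} the quotient $Q/\delta$ is abelian if and only if $\dis(Q/\delta)$ is abelian. Using the surjection $\dis(Q)\to\dis(Q/\delta)$ with kernel $\dis^\delta$, we have $\dis(Q/\delta)\cong\dis(Q)/\dis^\delta$, and this group is abelian precisely when $[\dis(Q),\dis(Q)]\leq\dis^\delta$, i.e.\ when $N\leq\dis^\delta$. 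Consequently $\gamma_1(Q)=[1_Q,1_Q]$ is exactly the smallest congruence $\delta$ satisfying $N\leq\dis^\delta$.

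It remains to identify this least congruence with $\mathcal{O}_N$. On one hand, \eqref{N in O_N} gives $N\leq\dis^{\mathcal{O}_N}$, so $\mathcal{O}_N$ belongs to the family of congruences just described and therefore $\gamma_1(Q)\leq\mathcal{O}_N$. On the other hand, let $\delta$ be any congruence with $N\leq\dis^\delta$; since a smaller normal subgroup has smaller orbits, $\mathcal{O}_N\leq\mathcal{O}_{\dis^\delta}$, and $\mathcal{O}_{\dis^\delta}\leq\delta$ by \eqref{orbits of kernel}, so $\mathcal{O}_N\leq\delta$. Taking $\delta=\gamma_1(Q)$ yields $\mathcal{O}_N\leq\gamma_1(Q)$, and the two inclusions give the claimed equality.

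The step requiring the most care is the equivalence ``$Q/\delta$ abelian $\iff N\leq\dis^\delta$'': it hinges on the connectedness of $Q/\delta$ (so that the ``in particular'' clause of Theorem \ref{central cong} applies) together with the identification $\dis(Q/\delta)\cong\dis(Q)/\dis^\delta$. Once this dictionary between abelian quotients of $Q$ and subgroups of $\dis(Q)$ containing its derived subgroup is established, everything else is a formal manipulation of the monotone inequalities relating $\mathcal{O}_{(-)}$ and $\dis^{(-)}$.
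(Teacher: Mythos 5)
Your proof is correct and follows essentially the same route as the paper's: both characterize $\gamma_1(Q)$ as the least congruence with abelian quotient, translate abelianness of $Q/\delta$ into $\gamma_1(\dis(Q))\leq\dis^\delta$ via the ``in particular'' clause of Theorem \ref{central cong} and the isomorphism $\dis(Q/\delta)\cong\dis(Q)/\dis^\delta$, and then conclude with the inequalities \eqref{N in O_N} and \eqref{orbits of kernel}. The only cosmetic difference is that the paper verifies condition (ii) of Theorem \ref{central cong} directly for $1_{Q/\alpha}$ (using regularity of the abelian transitive group), whereas you invoke the stated equivalence for connected quandles; these are interchangeable.
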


\begin{proof}
Let $G=\dis(Q)$ and $\alpha=\mathcal{O}_{\gamma_1(G)}$. We have $\gamma_1(G)\leq \dis^\alpha$ \eqref{N in O_N}, hence $G/\dis^\alpha$ is abelian and regular since $Q/\alpha$ is connected. Then applying Theorem \ref{central cong} to the congruence $1_{Q/\alpha}$ it can be inferred that $Q/\alpha$ is abelian. Let $\beta\in Con(Q)$ such that $Q/\beta$ is abelian. Then $\dis(Q/\beta)$ is abelian and then $\gamma_1(G)\leq \dis^\beta$ and so the orbits of $\gamma_1(G)$ are contained in the orbits of $\dis^\beta $ i.e. $\alpha\leq \mathcal{O}_{\dis^\beta}\leq \beta$ where the last inclusion follows by \eqref{orbits of kernel}. Hence $\alpha=\gamma_1(Q)$. 
\end{proof}
 A description of the center of a rack can also be obtained via the Galois connection as $\zeta_Q=\c{Z(\dis(Q))}\bigcap \sigma_Q$, where $a\, \sigma_Q\, b $ whenever $\dis(Q)_a=\dis(Q)_b$ (Proposition 5.9 \cite{CP}). Note that if $Q$ is a principal quandle then $\sigma_Q=1$, if instead $Q$ is faithful then $\zeta_Q\leq \sigma_Q$ \cite[Section 5.2]{CP}. So in both cases we have 
\begin{equation}\label{zeta_Q_Con_zeta_Q}.
 \zeta_Q=\c{Z(\dis(Q))}.
\end{equation}



\section{Derived Subgroup Lemma}\label{derived}

Let $G$ be a group and $f$ one of its automorphisms. We now prove a useful group theoretical Lemma involving the mapping 
\[\partial f:G\longrightarrow G,\quad g\mapsto g f(g)^{-1}\]
and the subgroup $[G,f]=\langle Im(\partial f)\rangle=\langle gf(g)^{-1}, \, g\in G\rangle$. We are interested in this subgroup because for a coset quandle $Q=\Q(G,H,f)$, the action of the a generators of the \disp\; group of $Q$ is given by 
$$L_{gH}L_H^{-1}(aH)=\partial f(g) aH.$$
So the orbit of $aH$ under $\dis(Q)$ is equal to the orbit of the left action of $[G,f]$ on it and $\dis(Q)\cong [G,f]/ Core_G(H)$ \cite[Lemma 3.11]{GiuThe}. Accordingly we have the following:

\begin{Lem}\label{commutator_with_trans_is_trans}
If a quandle $Q$ is connected then $[\dis(Q),\widehat{\Lm{a}}]\cong\dis(Q)$ for every $a\in Q$. 
\end{Lem}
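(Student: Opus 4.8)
The statement to prove is that for a connected quandle $Q$, the subgroup $[\dis(Q),\widehat{\Lm a}]$ — the group generated by the commutators $h\,\widehat{\Lm a}(h)^{-1} = h\,\Lm a h^{-1}\Lm a^{-1}$ with $h\in\dis(Q)$ — equals (or at least is isomorphic to) $\dis(Q)$ itself. Writing $G=\dis(Q)$ and $f=\widehat{\Lm a}$, the relevant object is the subgroup $[G,f]=\langle g f(g)^{-1} : g\in G\rangle$ discussed just above the Lemma, and the remark there tells us that $\dis(Q)\cong [G,f]/\Core_G(H)$ where $H=\dis(Q)_a$ in the minimal coset representation \eqref{min rep}. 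The cleanest route is therefore \emph{not} to argue abstractly about arbitrary group automorphisms, but to exploit this concrete identification: I want to show $[G,f]=G$ on the nose when $Q$ is connected, and then the isomorphism is immediate (and in fact $[\dis(Q),\widehat{\Lm a}]=\dis(Q)$, a stronger equality).

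First I would recall, from the discussion preceding the Lemma, that for a coset quandle $Q=\Q(G,H,f)$ the generator $\Lm{gH}\Lm{H}^{-1}$ of $\dis(Q)$ acts on $Q=G/H$ as left translation by $\partial f(g)=gf(g)^{-1}$. Now apply this with $(G,H,f)=(\dis(Q),\dis(Q)_a,\widehat{\Lm a})$, which is a valid presentation of $Q$ by \eqref{min rep}. Since $Q$ is connected, $\dis(Q)$ acts transitively on $Q$; but the action of $\dis(Q)$ on $Q\cong G/H$ in this representation factors through left translation by elements of $[G,f]=\langle\partial f(g):g\in G\rangle$. Hence $[G,f]$ already acts transitively on $G/H$. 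Combined with $[G,f]\le G$ and the fact that $\dis(Q)\cong[G,f]/\Core_G(H)$ (so $[G,f]$ surjects onto $\dis(Q)\cong G/\Core_G(H)$), a counting/index argument forces $[G,f]=G$: indeed $[G,f]$ contains a full set of coset representatives for $H$ in $G$ by transitivity, and it is normal in $G$ (it is visibly $f$-invariant and, being generated by commutators $[g,f]$ with $f$ running over $\lmlt Q = \dis(Q)\langle\Lm a\rangle$-conjugation, it is normal in $\dis(Q)$), so $G=[G,f]\cdot H$ with $[G,f]\trianglelefteq G$, and then $G/[G,f]$ is a quotient of $H/(H\cap[G,f])$; chasing this through the isomorphism $\dis(Q)\cong[G,f]/\Core_G(H)$ gives $G/[G,f]=1$.

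Alternatively — and perhaps more transparently — I would argue directly on $Q$ without invoking the coset representation. Set $N=[\dis(Q),\widehat{\Lm a}]$. One checks $N$ is normal in $\lmlt Q$: it is the normal closure of $\{\Lm a h\Lm a^{-1}h^{-1}\}$, and conjugating by $\Lm b\Lm a^{-1}\in\dis(Q)$ or by $\Lm a$ stays inside $N$ because $\widehat{\Lm a}$ permutes the generators of $\dis(Q)$. So $N\in Norm(Q)$ and we may form the congruence $\c N$ and the quotient $Q/\c N$, whose displacement group is $\dis(Q)/(\dis(Q)\cap\text{something})$; the point is that in $\dis(Q/\c N)$ the automorphism induced by $\widehat{\Lm a}$ becomes trivial on the image of $\dis(Q)$, forcing that quotient group to be abelian and acting regularly (connectedness passes to the quotient by Proposition \ref{connected ext} / \cite[page 184]{AG2003}), hence by Theorem \ref{central cong} $Q/\c N$ is an abelian connected quandle with $\dis(Q/\c N)$ abelian. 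But an abelian connected quandle of the form $\Q(A,g)$ with $A$ abelian has displacement group $[A,g]=\mathrm{Im}(1-g)$, and transitivity of the action forces $1-g$ to be surjective, i.e. the quotient group is $\mathrm{Im}(1-g)$ again — pushing this back up, one gets $\dis(Q)=N\cdot\dis(Q)_a$ with $N$ normal, and then $\dis(Q)/N$ embeds in an abelian quotient on which $\widehat{\Lm a}$ acts trivially while $\dis(Q)/N$ is generated by elements $\overline{h}\cdot\widehat{\Lm a}(\overline h)^{-1}=\overline 1$, so $\dis(Q)/N=1$.

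The main obstacle, in either approach, is the bookkeeping that turns ``$[G,f]$ acts transitively'' into ``$[G,f]=G$'': transitivity of a subgroup action does not by itself give the whole group, and one genuinely needs the normality of $[G,f]$ together with the precise description $\dis(Q)\cong[G,f]/\Core_G(H)$ from \cite[Lemma 3.11]{GiuThe} (equivalently, that in the minimal representation $H=\dis(Q)_a$ contains no nontrivial subgroup normal in $\dis(Q)$). Once that is in hand, the equality $\dis(Q)=[G,f]$ — and \emph{a fortiori} the claimed isomorphism $[\dis(Q),\widehat{\Lm a}]\cong\dis(Q)$ — drops out. I would also double-check the normality claim for $N=[\dis(Q),\widehat{\Lm a}]$ carefully, since it is the one place a sign or a conjugation could go wrong, but it follows from $\widehat{\Lm a}$ normalizing $\dis(Q)$ and permuting its standard generators $\Lm x\Lm y^{-1}$.
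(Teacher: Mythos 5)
Your first argument is correct and is essentially the paper's own proof: both apply the identification $\dis(Q)\cong[G,f]/\operatorname{Core}_G(H)$ from the discussion preceding the Lemma to the minimal coset representation $Q\cong\Q(\dis(Q),\dis(Q)_a,\widehat{\Lm{a}})$ and conclude from the triviality of the core of a point stabilizer in a transitive permutation group. The additional transitivity/counting bookkeeping and the sketchier second route are not needed once that identification is in hand.
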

\begin{proof}
It follows from the previous observation applied to the coset representation $Q\cong \mathcal{Q}(\dis(Q),\dis(Q)_a,\widehat{L_a})$ and the fact that the core of the point stabilizer in a transitive permutation group is trivial. 
\end{proof}




In the following we denote by $\gamma_i(G)$ the $i$-th element of the lower central series of a group $G$.
\begin{Lem}\label{derived subgroup lemma}
Let $G$ be a finite group and $f\in Aut(G)$. If $G=[G,f]$ then $Fix(f)\leq \gamma_1(G)$.
\end{Lem}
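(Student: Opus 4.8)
The plan is to work modulo the derived subgroup $\gamma_1(G)=[G,G]$ and exploit that $f$ acts trivially there. Write $\bar G = G/\gamma_1(G)$, an abelian group, and let $\bar f$ be the induced automorphism. Since $G=[G,f]$ is generated by elements $gf(g)^{-1}$, passing to $\bar G$ gives $\bar G = \langle \bar g - \bar f(\bar g) : \bar g \in \bar G\rangle = \operatorname{Im}(1-\bar f)$, i.e.\ $1-\bar f$ is surjective on the finite abelian group $\bar G$, hence an automorphism of $\bar G$. Equivalently, $1$ is not an eigenvalue of $\bar f$ "on the nose": $\operatorname{Fix}(\bar f) = \ker(1-\bar f) = 0$ in $\bar G$.

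Now take $x \in \operatorname{Fix}(f)$; I want $x \in \gamma_1(G)$, i.e.\ $\bar x = \bar 1$ in $\bar G$. From $f(x)=x$ we get $\bar f(\bar x) = \bar x$, so $\bar x \in \operatorname{Fix}(\bar f)$. By the previous paragraph $\operatorname{Fix}(\bar f)$ is trivial, so $\bar x$ is trivial and $x \in \gamma_1(G)$. This is exactly the claim.

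The only subtlety — and the one place the argument genuinely uses finiteness — is the step where surjectivity of $1-\bar f$ on $\bar G$ is upgraded to triviality of its kernel: a surjective endomorphism of a finite set is injective. Without finiteness this fails (e.g.\ shift-type automorphisms of infinite abelian groups), so I would make sure to invoke $|G|<\infty$ precisely here; everything else is formal. A secondary point to state carefully is that $\operatorname{Im}(\partial f)$ generating $G$ really does pass to "$\operatorname{Im}(1-\bar f)$ generates $\bar G$" — this is immediate because the quotient map is a surjective homomorphism carrying the generating set $\{gf(g)^{-1}\}$ onto $\{\bar g - \bar f(\bar g)\}$ (writing $\bar G$ additively), and in an abelian group a subgroup containing all of $\operatorname{Im}(1-\bar f)$ is all of $\bar G$ once it is a generating set. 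No results beyond elementary group theory are needed; in particular Lemma~\ref{commutator_with_trans_is_trans} is not used here, it is the quandle-side companion to this purely group-theoretic statement.
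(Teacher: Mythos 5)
Your proof is correct and is essentially the same as the paper's: the paper also passes to the abelianization $A=G/\gamma_1(G)$, observes that $\partial f$ induces a (surjective, hence by finiteness bijective) endomorphism $\partial f_A$ of $A$ — which is exactly your $1-\bar f$ in additive notation — and concludes that $\operatorname{Fix}(f)$ lands in its trivial kernel. Your explicit remarks on where finiteness is used and why the image of $1-\bar f$ is already a subgroup are the same points the paper leaves implicit.
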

\begin{proof}
Let $A=G/\gamma_1(G)$, the abelianization of $G$. Since $\gamma_1(G)$ is characteristic, the map $\partial f_A(a\gamma_1(G))=\partial f(a)\gamma_1(G)$ is well defined. Since $G=[G,f]$, $\partial f_A$ is surjective and then it is an automorphism of $A$. We have also that if $a\in Fix(f)$, then
\[\partial f_A(a\gamma_1(G))=\gamma_1(G).\]
Therefore $a\gamma_1(G)$ is in the kernel of $\partial f_A$ which is trivial and $a\in \gamma_1(G)$.
\end{proof}

\begin{The}[Derived subgroup Lemma]\label{stabilizer in derived}
Let $Q$ be a finite connected quandle. Then $\dis(Q)_a\leq \gamma_1(\dis(Q))$ for every $a\in Q$.
\end{The}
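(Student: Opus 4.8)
The plan is to reduce the statement about $\dis(Q)_a$ to the purely group-theoretic Lemma \ref{derived subgroup lemma} via the minimal coset representation. First I would set $G=\dis(Q)$ and $f=\widehat{\Lm{a}}$, so that by \eqref{min rep} we have $Q\cong \Q(G,G_a,f)$. The key point is that $G_a=\dis(Q)_a$ embeds (in fact equals, after quotienting by the trivial core) into the relevant group, and that the displacement group of a connected quandle is "self-generated" by the commutator construction: by Lemma \ref{commutator_with_trans_is_trans} we have $[G,f]=[\dis(Q),\widehat{\Lm{a}}]\cong\dis(Q)=G$. More precisely, from the observation preceding Lemma \ref{commutator_with_trans_is_trans}, $[G,f]/\Core_G(G_a)\cong\dis(Q)$, and since $Q$ is connected the point stabilizer has trivial core, so $[G,f]=G$ outright (using that $[G,f]\leq G$ together with the isomorphism of equal finite orders, or directly that the orbit of the basepoint under $[G,f]$ is all of $Q$ and $[G,f]$ contains the stabilizer).

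Once $G=[G,f]$ is established, I would apply Lemma \ref{derived subgroup lemma} directly: since $G$ is finite and $f\in\Aut(G)$ with $G=[G,f]$, the lemma gives $Fix(f)\leq\gamma_1(G)$. It then remains to identify $\dis(Q)_a$ with a subgroup of $Fix(f)=Fix(\widehat{\Lm{a}})$. This is immediate from the coset-quandle requirement: in $\Q(G,H,f)$ one always demands $H\leq Fix(f)$ (see Example \ref{coset quandles}), and in the minimal representation $H=\dis(Q)_a$ and $f=\widehat{\Lm{a}}$, so $\dis(Q)_a\leq Fix(\widehat{\Lm{a}})\leq\gamma_1(\dis(Q))$, which is exactly the claim. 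Since connected quandles have all point stabilizers conjugate in $\dis(Q)$ and $\gamma_1(\dis(Q))$ is characteristic, it suffices to prove this for one $a\in Q$, but in fact the argument works verbatim for every $a$.

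The main obstacle is making sure the chain of identifications is airtight: specifically, verifying that $G=[G,f]$ (not merely $[G,f]/\Core_G(G_a)\cong G$) and that $Fix(\widehat{\Lm{a}})$ genuinely contains $\dis(Q)_a$ rather than just being related to it. The first point is handled by Lemma \ref{commutator_with_trans_is_trans} together with the triviality of $\Core_G(G_a)$ for a transitive action; one should note $[G,f]\trianglelefteq G$ and that the quotient being isomorphic to $G$ forces equality in the finite case, or simply observe that $[G,f]$ acts transitively on $Q$ in the coset model and already contains the stabilizer $G_a$ (which consists of products of generators $\partial f(g)$ fixing the basepoint), whence $[G,f]=G$. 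The second point is the defining inclusion $H\leq Fix(f)$ built into the coset-quandle construction, which holds automatically in the minimal representation. Everything else is bookkeeping, and no nontrivial computation is required.
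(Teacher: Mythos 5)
Your proof is correct and follows essentially the same route as the paper's: establish $[\dis(Q),\widehat{L_a}]=\dis(Q)$ via Lemma \ref{commutator_with_trans_is_trans} together with the triviality of the core of a point stabilizer in a transitive group, apply Lemma \ref{derived subgroup lemma} to $G=\dis(Q)$, $f=\widehat{L_a}$, and conclude from $\dis(Q)_a\leq Fix(\widehat{L_a})$. The only minor caveat is that your parenthetical alternative (``$[G,f]$ already contains the stabilizer'') is not justified as stated, but your primary argument --- that $[G,f]\leq G$ is a subgroup of a finite group isomorphic to the whole group, hence equal to it --- is sound and is exactly what the paper intends.
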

\begin{proof}
Let $a\in Q$, then $[\dis(Q),\widehat{\Lm{a}}]= \dis(Q)$ by Lemma \ref{commutator_with_trans_is_trans}. Now we apply Lemma \ref{derived subgroup lemma} to $G=\dis(Q)$, $f=\widehat{\Lm{a}}$, and conclude using the fact that $\dis(Q)_a\leq Fix(\widehat{\Lm{a}})$. 
\end{proof}
\begin{Cor}\label{on 2-step nilpotency}
Let $Q$ be a finite connected quandle. If $\dis(Q)$ is nilpotent of class $2$ then $Q$ is principal. 
\end{Cor}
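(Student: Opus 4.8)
The plan is to exploit the Derived Subgroup Lemma together with the minimal coset representation. Recall that a connected quandle $Q$ is principal precisely when $\dis(Q)_a = 1$ for every $a\in Q$ (Lemma \ref{facts}(ii)). So the goal reduces to showing that the point stabilizer in the displacement group is trivial whenever $G := \dis(Q)$ is nilpotent of class $2$.

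First I would apply Theorem \ref{stabilizer in derived} to get $\dis(Q)_a \leq \gamma_1(G)$. Now the nilpotency-of-class-$2$ hypothesis means exactly that $\gamma_1(G) = [G,G]$ is central in $G$, i.e. $\gamma_1(G) \leq Z(G)$ (equivalently $\gamma_2(G) = 1$). Combining these, $\dis(Q)_a \leq Z(G)$. The key point is now that a central subgroup is normalized by everything, in particular by $\widehat{\Lm{a}}$, so $\dis(Q)_a$ is contained in a normal subgroup of $\lmlt{Q}$ contained in $\dis(Q)$; but more directly, since $\dis(Q)_a$ is central in $G$ and $G$ acts transitively on $Q$, for any $g\in G$ the stabilizer of $g(a)$ is $g\,\dis(Q)_a\,g^{-1} = \dis(Q)_a$. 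Hence $\dis(Q)_a$ fixes every point of $Q$, which forces $\dis(Q)_a = 1$ (the action of $\dis(Q)$ on $Q$ is faithful by construction, since $\dis(Q)\leq \Sym{Q}$). Therefore $Q$ is principal.

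An equally clean way to phrase the last step, staying within the tools of the excerpt: $\dis(Q)_a \leq Z(G)$ together with transitivity gives that $\dis(Q)_a = \bigcap_{b\in Q}\dis(Q)_b = \Core_{G}{\dis(Q)_a}$, and the core of a point stabilizer in a transitive faithful permutation group is trivial (this is exactly the fact already invoked in the proof of Lemma \ref{commutator_with_trans_is_trans}). Either route closes the argument in a couple of lines.

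I do not expect a genuine obstacle here; the statement is essentially a corollary in the literal sense. The only thing to be a little careful about is making sure the implication ``$\dis(Q)$ nilpotent of class $2$'' $\Rightarrow$ ``$\gamma_1(\dis(Q))$ central'' is used correctly — this is just the definition of the lower central series ($\gamma_2(G)=[\gamma_1(G),G]=1$) — and that one is entitled to conclude triviality from centrality via transitivity of the $\dis(Q)$-action. If $Q$ is the one-element quandle the statement is vacuous, so no degenerate case needs separate treatment.
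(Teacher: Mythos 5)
Your argument is correct and is essentially identical to the paper's proof: both apply Theorem \ref{stabilizer in derived} to get $\dis(Q)_a\leq\gamma_1(\dis(Q))\leq Z(\dis(Q))$, then use that a central point stabilizer of a transitive (faithful) action is trivial, and conclude via Lemma \ref{facts}(ii). No gaps; you have merely spelled out the conjugacy-of-stabilizers step that the paper states in one line.
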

\begin{proof}
Since $\dis(Q)$ is nilpotent of class $2$ we have that $\dis(Q)_{a}\leq \gamma_1(\dis(Q))\leq Z(\dis(Q))$. The stabilizers of a transitive action are conjugate, hence if the stabilizer of a point is a central subgroup, then it is trivial. Thus, $\dis(Q)_{a}=1$ and so $Q$ is principal (cfr. Lemma \ref{facts}(ii)).
%
\end{proof}
\begin{remark}
Let $Q$ be a connected quandle, $\dis(Q)=G$ and $\alpha=\mathcal{O}_{\gamma_2(G)}$. Then $\gamma_2(G)\leq \dis^\alpha$ and so $\dis(Q/\alpha)$ is nilpotent of class 2, thus $Q/\alpha$ is principal. We can therefore slightly improve the Derived subgroup Lemma. Indeed since $\alpha\leq \mathcal{O}_{\gamma_1(G)}$ then $\dis(Q)_a\leq \dis^\alpha\leq \gamma_1(G)$ for every $a\in Q$. In the \cite{RIG} library the quandles denoted by SmallQuandle(24,20) and SmallQuandle(24,21) provide examples of quandles in which $\dis^\alpha\lneqq \gamma_1(G)$.\end{remark}

The following application of Theorem \ref{stabilizer in derived} is the analogous of abelianization of groups for connected quandles. 
\begin{Pro}\label{ker of [1,1]}
Let $Q$ be a finite connected quandle and $G=\dis(Q)$. Then $\dis^{\gamma_1(Q)}=\gamma_1(G)$ and $Q/\gamma_1(Q)$ is affine over $G/\gamma_1(G)$.
\end{Pro}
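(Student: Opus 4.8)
The plan is to deduce both claims from Proposition~\ref{gamma_1} together with the Derived subgroup Lemma (Theorem~\ref{stabilizer in derived}) and the minimal coset representation \eqref{min rep}. Write $G=\dis(Q)$ and $\alpha=\gamma_1(Q)$. By Proposition~\ref{gamma_1} we have $\alpha=\mathcal{O}_{\gamma_1(G)}$, so by \eqref{N in O_N} we already know $\gamma_1(G)\leq \dis^{\alpha}$. For the reverse inclusion, the key point is that $\dis(Q/\alpha)\cong G/\dis^{\alpha}$ is abelian (since $Q/\alpha=Q/\gamma_1(Q)$ is abelian and connected, so its displacement group is abelian by the last sentence of Theorem~\ref{central cong}); hence $\gamma_1(G)$ is contained in the kernel of the surjection $G\to \dis(Q/\alpha)$, which is exactly $\dis^{\alpha}$. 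But that gives $\dis^{\alpha}\geq \gamma_1(G)$ the wrong way — so instead I argue: $G/\gamma_1(G)$ is abelian, therefore the congruence $\mathcal{O}_{\gamma_1(G)}=\alpha$ has abelian quotient, and $\dis^{\alpha}$ is the smallest normal subgroup of $G$ (in $Norm(Q)$) with abelian quotient acting regularly, so $\dis^{\alpha}\leq \gamma_1(G)$; combined with $\gamma_1(G)\leq\dis^{\alpha}$ this yields $\dis^{\gamma_1(Q)}=\gamma_1(G)$.

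More carefully, for the inclusion $\dis^{\alpha}\leq\gamma_1(G)$ I would use the Derived subgroup Lemma. Apply the minimal coset representation \eqref{min rep}: $Q\cong\mathcal{Q}(G,G_a,\widehat{L_a})$, where $G_a=\dis(Q)_a$. By Theorem~\ref{stabilizer in derived}, $G_a\leq\gamma_1(G)$. Now $\dis^{\alpha}$ is generated, modulo $\Core_G(G_a)=1$, by the displacement group relative to $\alpha$ pushed into $G$; concretely, since $\alpha$ is the orbit congruence of $\gamma_1(G)$, the block $[a]_\alpha$ is the $\gamma_1(G)$-orbit of $a$, and $\dis^{\alpha}$ fixes every such block setwise. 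The quotient $G/\gamma_1(G)$ acts on $Q/\alpha$ and this action is the (regular, since $Q/\alpha$ is connected and its displacement group is abelian hence acts regularly) action of $\dis(Q/\alpha)$; the kernel of $G\to\dis(Q/\alpha)$ is $\dis^{\alpha}$, and since $G/\gamma_1(G)$ already surjects onto this abelian regular quotient with the stabilizer of $[a]_\alpha$ mapping to $G_a\gamma_1(G)/\gamma_1(G)$ which is trivial by Theorem~\ref{stabilizer in derived}, we get $\dis^{\alpha}\subseteq\gamma_1(G)$. Hence $\dis^{\gamma_1(Q)}=\gamma_1(G)$.

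For the second claim, $Q/\gamma_1(Q)$ is a connected abelian quandle, hence (by Theorem~\ref{central cong}, since connected abelian $\iff$ $\dis$ abelian, and using that a connected quandle with abelian displacement group has trivial point stabilizers, i.e. is principal by Lemma~\ref{facts}(ii), and a principal quandle over an abelian group is affine) it is affine. Its displacement group is $\dis(Q/\gamma_1(Q))\cong G/\dis^{\gamma_1(Q)}=G/\gamma_1(G)$ by the first part, so $Q/\gamma_1(Q)$ is affine over $G/\gamma_1(G)$ via the representation $\mathcal{Q}(G/\gamma_1(G),\widehat{L_{[a]}})$. I expect the main obstacle to be bookkeeping the two Galois-connection inclusions in \eqref{N in O_N} and \eqref{orbits of kernel} in the correct direction — it is easy to conflate $\dis^{\alpha}$, $\dis_{\alpha}$, and $\mathcal{O}_{\gamma_1(G)}$ — and making sure the identification $\dis(Q/\alpha)\cong G/\dis^{\alpha}$ is invoked cleanly so that "abelian quotient" forces the containment in $\gamma_1(G)$ rather than the reverse.
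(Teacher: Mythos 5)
Your proposal is correct and is essentially the paper's own argument: both reduce the statement to Proposition \ref{gamma_1} (which gives $\gamma_1(G)\leq\dis^{\gamma_1(Q)}$ and identifies the blocks of $\gamma_1(Q)$ with the $\gamma_1(G)$-orbits) combined with the Derived Subgroup Lemma $\dis(Q)_a\leq\gamma_1(G)$, the only difference being that the paper closes the gap by the order count $|[a]|=|\gamma_1(G)|/|\dis(Q)_a|=|\dis^{\gamma_1(Q)}|/|\dis(Q)_a|$ while you note that the setwise block stabilizer equals $\gamma_1(G)\,\dis(Q)_a=\gamma_1(G)$. (Your first paragraph's appeal to $\dis^{\alpha}$ being the ``smallest normal subgroup with abelian regular quotient'' is unjustified, but the subsequent ``more careful'' argument correctly supersedes it.)
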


\begin{proof}
Let $G=\dis(Q)$. From the definition of $\dis^{\gamma_1(Q)}$ given in \eqref{kernel}, it follows that the orbits of $\dis^{\gamma_1(Q)}$ and the orbits of $\gamma_1(G)$ acting on $Q$ coincide as by Proposition \ref{gamma_1} $\gamma_1(Q)=\mathcal{O}_{\gamma_1(G)}$. Let $a\in Q$, by virtue of Theorem \ref{stabilizer in derived}, $\dis(Q)_a\leq \gamma_1(G)\leq \dis^{\gamma_1(Q)}$ and 
\[|[a]|=\frac{|\gamma_1(G)|}{|\dis(Q)_a|}=\frac{|\dis^{\gamma_1(Q)}|}{|\dis(Q)_a|}.\]
Therefore $\gamma_1(G)=\dis^{\gamma_1(Q)}$ and so the factor $Q/\gamma_1(Q)$ is affine over $\dis(Q/\gamma_1(Q))\cong G/\gamma_1(G)$.
\end{proof}

Connectedness of a coset quandle over a nilpotent group $G$ can be established by looking at the induced structure of affine quandle over $G/\Phi(G)$ where $\Phi(G)$ is the Frattini subgroup of $G$. 
From now on we will use the following notation: if $G$ is a group, $f\in \aut{G}$ and $H\unlhd G$ such that $f(H)=H$,  $f_H$ will be the induced automorphism on $G/H$, i.e.:
\begin{equation}\label{induced_morphism}
f_H:G/H\longrightarrow G/H,\quad aH\mapsto f(a)H.
\end{equation}
\begin{Pro}\label{Q conn iff Q/[1,1,] conn} 
Let $G$ be a nilpotent group, let $Q=\Q(G,H,f)$ and $P=\aff(G/\Phi(G),f_{\Phi(G)})$. 
\begin{itemize}
\item[(i)] If $P$ is connected then $Q$ is connected and $\dis(Q)\cong G/Core_G(H)$.
\item[(ii)] If $P$ is connected and $G/\Phi(G)$ is cyclic then $Q$ is connected and affine over a cyclic group. 
\end{itemize}
\end{Pro}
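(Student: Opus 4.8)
The plan is to reduce both connectedness hypotheses to statements about the subgroup $[G,f]$ inside $G$, and then bring in the defining property of the Frattini subgroup. First I would recall, from the discussion opening this section, that for $Q=\Q(G,H,f)$ the generators $L_{gH}L_H^{-1}$ of $\dis(Q)$ act on $G/H$ as left translation by $\partial f(g)$; hence $\dis(Q)$ acts on $G/H$ through left translations by $[G,f]$, and since $\dis(Q)$ and $\lmlt{Q}$ have the same orbits, $Q$ is connected if and only if the orbit of the coset $H$ is all of $G/H$, i.e. $[G,f]H=G$. On the affine side, since $G$ is nilpotent we have $[G,G]\leq\Phi(G)$, so $G/\Phi(G)$ is abelian and $P=\aff(G/\Phi(G),f_{\Phi(G)})$ is genuinely an affine quandle; it is connected if and only if $1-f_{\Phi(G)}$ is surjective on $G/\Phi(G)$, equivalently $[G/\Phi(G),f_{\Phi(G)}]=G/\Phi(G)$, which is the standard connectedness criterion for affine quandles, immediate from the formula $a\ast b=(1-f)(a)+f(b)$ in Example~\ref{coset quandles}.

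For (i): the image of $[G,f]=\langle gf(g)^{-1}\rangle$ under the projection $G\to G/\Phi(G)$ is exactly $\langle \bar g\,f_{\Phi(G)}(\bar g)^{-1}:\bar g\in G/\Phi(G)\rangle=[G/\Phi(G),f_{\Phi(G)}]$, using that the projection is surjective and intertwines $f$ with $f_{\Phi(G)}$. So if $P$ is connected, this image is all of $G/\Phi(G)$, i.e. $[G,f]\Phi(G)=G$; since $\Phi(G)$ consists of non-generators of $G$, this forces $[G,f]=G$. In particular $[G,f]H=G$, so $Q$ is connected by the first paragraph, and the isomorphism $\dis(Q)\cong[G,f]/\Core{G}{H}$ of \cite[Lemma~3.11]{GiuThe} becomes $\dis(Q)\cong G/\Core{G}{H}$.

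For (ii): if in addition $G/\Phi(G)$ is cyclic, the non-generator property of $\Phi(G)$ makes $G$ itself cyclic (a preimage of a generator of $G/\Phi(G)$ generates $G$), hence abelian; then $\Core{G}{H}=H$, so by (i) $\dis(Q)\cong G/H$ is cyclic, in particular abelian, whence $\gamma_1(\dis(Q))=1$. By the Derived subgroup Lemma (Theorem~\ref{stabilizer in derived}), $\dis(Q)_a\leq\gamma_1(\dis(Q))=1$ for every $a\in Q$, so $Q$ is principal by Lemma~\ref{facts}(ii); thus $Q\cong\Q(\dis(Q),\widehat{\Lm{a}})$ is principal over the abelian group $\dis(Q)\cong G/H$, and hence affine over that cyclic group.

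I expect the only real content to be the Frattini step in (i): recognising that connectedness of the quotient affine quandle $P$ is equivalent to $[G,f]\Phi(G)=G$, and then deducing $[G,f]=G$ because Frattini elements are non-generators. Everything after that is assembling results already available — the orbit description from the start of Section~\ref{derived}, the coset-quandle/displacement-group isomorphism, the Derived subgroup Lemma and Lemma~\ref{facts}(ii). The only auxiliary facts I would take a moment to verify are the two elementary statements about affine quandles used above (connectedness $\Leftrightarrow$ surjectivity of $1-f_{\Phi(G)}$, and that a principal quandle over an abelian group is affine), both of which follow directly from the affine operation formula.
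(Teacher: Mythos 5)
Your proof is correct and follows essentially the same route as the paper's: the heart of both arguments is that connectedness of $P$ forces $[G/\Phi(G),f_{\Phi(G)}]=G/\Phi(G)$, which lifts to $[G,f]=G$ by the non-generator property of $\Phi(G)$, after which the orbit description of $\dis(Q)$ and \cite[Lemma 3.11]{GiuThe} finish part (i). The only divergence is in (ii), where you detour through the Derived subgroup Lemma (which assumes finiteness) to get principality, whereas one can simply note that since $G$ is cyclic, hence abelian, the coset quandle $\Q(G,H,f)$ is already affine over the cyclic group $G/H$ directly from the operation formula.
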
 
\begin{proof}
(i) The quandle $P$ is affine and connected and $\dis(P)$ is a regular subgroup of $G/\Phi(G)$. By Lemma \ref{commutator_with_trans_is_trans} $\dis(P)\cong [G/\Phi(G),f_{\Phi(G)}]$, therefore $[G/\Phi(G),f_{\Phi(G)}]=G/\Phi(G)$. The set $\setof{af(a)^{-1}\Phi(G)}{a\in G}$ generates $G/\Phi(G)$ and so $\setof{af(a)^{-1}}{a\in G}$ generates $G$. Therefore, $G=[G,f]$, and since the action of $[G,f]$ is the same of $\dis(Q)$,  $Q$ is connected and $\dis(Q)\cong G/Core_G(H)$.

(ii) By (i), $G=[G,f]$ and $G$ is cyclic, so $Q$ is connected and affine. 


\end{proof}

\begin{Cor}\label{nilpotent case}
Let $G$ be a finite $p$-group, $f\in \aut{G}$ and $H\leq Fix(f)$. If $f_{\Phi(G)}$ has no eigenvalues equal to $1$, then $Q=\Q(G,H,f)$ is connected.
\end{Cor}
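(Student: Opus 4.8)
The plan is to reduce Corollary~\ref{nilpotent case} to Proposition~\ref{Q conn iff Q/[1,1,] conn}(i) by showing that the hypothesis ``$f_{\Phi(G)}$ has no eigenvalue $1$'' forces the affine quandle $P=\aff(G/\Phi(G),f_{\Phi(G)})$ to be connected. First I would record that $G$, being a finite $p$-group, is nilpotent, so Proposition~\ref{Q conn iff Q/[1,1,] conn} applies; moreover $\Phi(G)$ is characteristic, hence $f(\Phi(G))=\Phi(G)$ and the induced automorphism $f_{\Phi(G)}$ of the elementary abelian group $V:=G/\Phi(G)$ is well defined. The group $V\cong(\mathbb{Z}/p)^d$ is a vector space over the field $\mathbb{F}_p$, and $f_{\Phi(G)}$ is an $\mathbb{F}_p$-linear automorphism of $V$; the phrase ``no eigenvalues equal to $1$'' means precisely that $1$ is not a root of its characteristic polynomial over $\mathbb{F}_p$ (equivalently over $\overline{\mathbb{F}_p}$), i.e.\ that $1-f_{\Phi(G)}$ is invertible on $V$.

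The central step is then: for an affine quandle $\aff(V,g)$ over a finite abelian group $V$, connectedness is equivalent to $1-g$ being surjective (hence bijective, $V$ being finite). This is essentially the statement that the displacement group acts transitively, since, as recalled in Section~\ref{derived}, the generators $L_{x}L_{0}^{-1}$ act on $\aff(V,g)$ by translation by $\partial g(x)=x-g(x)=(1-g)(x)$; thus $\dis(\aff(V,g))$ is the translation subgroup by $\langle \mathrm{Im}(1-g)\rangle=[V,g]$, and this is all of $V$ exactly when $1-g$ is onto. Equivalently one may invoke Lemma~\ref{facts}(ii) via Proposition~\ref{Q conn iff Q/[1,1,] conn}: $[V,g]=V$ iff $\{x-g(x):x\in V\}$ generates $V$, and for a linear map on a vector space the image of $1-g$ is already a subspace, so it generates iff it equals $V$ iff $1-g$ is invertible iff $1$ is not an eigenvalue. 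Applying this with $V=G/\Phi(G)$ and $g=f_{\Phi(G)}$ gives that $P$ is connected.

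With $P$ connected, Proposition~\ref{Q conn iff Q/[1,1,] conn}(i) immediately yields that $Q=\Q(G,H,f)$ is connected (and, as a bonus that we need not state, $\dis(Q)\cong G/\Core_G(H)$). I do not expect any genuine obstacle here: the only point that needs care is the translation between the universal-algebra language (``$P$ connected'') and the linear-algebra condition (``$1-f_{\Phi(G)}$ invertible''), together with the harmless observation that over $\mathbb{F}_p$ the image of a linear endomorphism is a subgroup, so ``generates $V$'' and ``equals $V$'' coincide — this is what lets us pass from the generating condition $[V,g]=V$ in Proposition~\ref{Q conn iff Q/[1,1,] conn} to the eigenvalue condition. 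Everything else is a direct citation of the preceding results.
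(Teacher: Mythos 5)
Your proposal is correct and follows essentially the same route as the paper: reduce to the connectedness of $P=\aff(G/\Phi(G),f_{\Phi(G)})$ and then apply Proposition~\ref{Q conn iff Q/[1,1,] conn}(i). The only cosmetic difference is that you certify $P$ is connected via surjectivity of $1-f_{\Phi(G)}$ (the image side), whereas the paper uses ``connected iff latin iff $Fix(f_{\Phi(G)})=1$'' (the kernel side); on a finite-dimensional $\mathbb{F}_p$-space these are the same condition by rank--nullity, and you even note the kernel-side variant yourself.
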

\begin{proof}
Let $P=\aff(G/\Phi(G),f_{\Phi(G)})$. The group $G/\Phi(G)$ is an elementary abelian $p$-group, $Fix(f_{\Phi(G)})$ is the eigenspace relative to $1$. The affine quandle $P$ is connected if and only if it is latin, i.e. $Fix(f_{\Phi(G)})=1$ (see \ref{facts}(ii)). Thus, by Proposition \ref{Q conn iff Q/[1,1,] conn}(i), $Q$ is connected.
\end{proof} Corollary \ref{nilpotent case} can be extended to finite nilpotent groups by looking at each $p$-component in the factor with respect to the Frattini subgroup.

\section{Central congruences of faithful quandles}\label{central}

The main feature we are going to use to investigate nilpotent quandles, of which prime power size connected quandles are an instance (see Section \ref{p-racks}), is that central subgroups of transitive groups are semiregular. This fact becomes especially useful under the additional hypothesis that the quandle is faithful. 
\begin{Pro}\label{semiregul groups}
Let $Q$ be a finite faithful quandle, $a\in Q$ and let $N\in Norm(Q)$ be semiregular. Then:
\begin{itemize}
\item[(i)] $N=\dis_{\c{N}}=\{L_b L_a^{-1}, b \in [a]_{\c{N}}\} \cong \dis([a]_{\c{N}})$.

\item[(ii)] $\c{N}=\mathcal{O}_N$ and $[a]_{\c{N}}$ is a connected subquandle.

\item[(iii)]  
$\dis(Q)_{[a]_{\c{N}}} \cong \dis_{\c{N}}\rtimes \dis(Q)_ a$ and $\dis^{\c{N}}\cong \dis_{\c{N}}\rtimes \dis^{\c{N}}_a$.
\end{itemize}
\end{Pro}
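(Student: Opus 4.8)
The plan is to use faithfulness of $Q$ to identify the semiregular normal subgroup $N$ with a concrete set of displacement elements based at $a$, and then to read off (i)--(iii) from the resulting regular action of $N$ on the block $B:=[a]_{\c N}$, which is a subquandle since it is a congruence class.

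I would begin with two size comparisons. Since $N$ is semiregular, it acts \emph{regularly} on each of its orbits, so every $\mathcal{O}_N$-class has exactly $|N|$ elements; in particular $|[a]_{\mathcal{O}_N}|=|N|$. Since $Q$ is faithful, $b\mapsto L_b$ is injective, hence so is $b\mapsto L_bL_a^{-1}$, and this last map sends $[a]_{\c N}$ into $N$ by the very definition of $\c N$; thus $|[a]_{\c N}|\leq|N|$. As $\mathcal{O}_N\leq\c N$ forces $[a]_{\mathcal{O}_N}\subseteq[a]_{\c N}$, both estimates must be equalities, so $\c N=\mathcal{O}_N$, $|[a]_{\c N}|=|N|$, and $b\mapsto L_bL_a^{-1}$ is a bijection from $[a]_{\c N}$ onto $N$. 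Together with the chain $\{L_bL_a^{-1}:b\in[a]_{\c N}\}\subseteq\dis_{\c N}\subseteq N$ coming from \eqref{N in O_N}, this gives $N=\dis_{\c N}=\{L_bL_a^{-1}:b\in[a]_{\c N}\}$, which is most of (i) along with the equality $\c N=\mathcal{O}_N$ of (ii).

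For the isomorphism $N\cong\dis(B)$ I would pass to the restricted action. As $N\leq\dis^{\c N}$ (again \eqref{N in O_N}), $N$ stabilizes $B$ setwise, so restriction defines a homomorphism $\rho\colon N\to\Sym{B}$; its kernel fixes $a$ and is therefore trivial by semiregularity, so $\rho$ is injective. Next, $N=\langle L_xL_y^{-1}:x,y\in B\rangle$, for this subgroup lies in $\dis_{\c N}=N$ and already contains $\{L_xL_a^{-1}:x\in B\}=N$. Since $B$ is a subquandle, each $L_x|_B$ with $x\in B$ is a permutation of $B$ and $\rho(L_xL_y^{-1})=L_x|_B\,L_y|_B^{-1}$ for $x,y\in B$; hence $\rho(N)=\langle L_x|_B\,L_y|_B^{-1}:x,y\in B\rangle=\dis(B)$. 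Therefore $N\cong\dis(B)$, and as $N$ is transitive on $B$ (which is the $N$-orbit of $a$, now that $\c N=\mathcal{O}_N$) so is $\dis(B)$; thus $B$ is a connected subquandle, completing (i) and (ii).

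Finally, (iii) is an orbit--stabilizer splitting. Since $N\in Norm(Q)$ it is normal in $\dis(Q)$, hence normal in $\dis(Q)_{[a]_{\c N}}$ and in $\dis^{\c N}$, and $\dis(Q)_a\leq\dis(Q)_{[a]_{\c N}}$ by \eqref{stabiliser in block stabiliser}. Given $g\in\dis(Q)_{[a]_{\c N}}$ one has $g(a)\in B$, so transitivity of $N$ on $B$ yields $n\in N$ with $n(a)=g(a)$, whence $n^{-1}g\in\dis(Q)_a$ and $\dis(Q)_{[a]_{\c N}}=N\,\dis(Q)_a$; semiregularity gives $N\cap\dis(Q)_a=1$, so the product is semidirect and $\dis(Q)_{[a]_{\c N}}\cong\dis_{\c N}\rtimes\dis(Q)_a$. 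Running the same argument inside $\dis^{\c N}$ (which also stabilizes $B$ setwise, and inside which $n^{-1}g$ again lies, since $N\leq\dis^{\c N}$) gives $\dis^{\c N}\cong\dis_{\c N}\rtimes\dis^{\c N}_a$. I expect the only delicate point to be the pincer argument of the second paragraph, where the two hypotheses (faithful, semiregular) are consumed simultaneously; once $\c N=\mathcal{O}_N$ and $N=\{L_bL_a^{-1}:b\in B\}$ are secured, everything after that is formal.
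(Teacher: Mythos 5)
Your proof is correct and follows essentially the same route as the paper: the pincer $|N|=|[a]_{\mathcal{O}_N}|\leq|[a]_{\c N}|\leq|N|$ is just a counting repackaging of the paper's bijection $n\mapsto L_{n(a)}L_a^{-1}$ from $N$ to $\{L_bL_a^{-1}: b\in[a]_{\c N}\}$, consuming faithfulness and semiregularity in the same way, and your factorization argument for (iii) is the same orbit--stabilizer splitting the paper performs by an index count. Your second paragraph is in fact somewhat more explicit than the paper's terse ``the last isomorphism follows from $N$ being semiregular,'' which is a small plus but not a different method.
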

\begin{proof}
Let $[a]$ be the block of $a$ in $\c{N}$.

(i) The mapping:
\begin{equation}
\varphi :N\longrightarrow N, \quad n\mapsto n L_a n^{-1} L_a^{-1}= L_{n(a)} L_{a}^{-1},
\end{equation}
is bijective. Indeed, since $Q$ is faithful $\varphi(n)=\varphi(m)$ if and only if $n(a)=m(a)$, and since $N$ is semiregular then $n=m$. Therefore $N=\setof{L_{n(a)} L_a^{-1}}{n\in N} \leq \dis_{\c{N}}  $ for every $a\in Q$. Since it is always the case that 
 $ \dis_{\c{N}} \leq N$, we can conclude that $N=\dis_{\c{N}} \cong \dis([a])$, where the last isomorphism follows from $N$ being semiregular.

(ii) By 
(i) if $b\, \c{N}\, a$ (i.e. $L_b L_a^{-1}\in N$) then $b=n(a)$ for some $n\in N$. Thus the orbit of $a$ under $N$ coincides with $[a]$. Therefore the group $N \cong \dis([a])$ is transitive on $[a]$, and the block of $a$ is a connected subquandle.

(iii) By 
(ii) $N=\dis_{\c{N}}$ is regular on $[a]$, i.e. $|[a]|=|\dis_{\c {N}}|$,  and since $\dis_{\c{N}}$ is included in $\BlocS{Q}{a}$ we have that $\BlocS{Q}{a}$ is transitive on $[a]$.  The index of $\dis(Q)_a$ in $\BlocS{Q}{a}$ is then $|[a]|$, $\dis_{\c{N}}$ is normal and $\dis_{\c{N}}\bigcap \dis(Q)_a=1$. We can conclude that $\dis_{\c{N}}$ and $\dis(Q)_a$ generate the whole block stabilizer which splits as a semidirect product of the two subgroups. The same argument shows that a similar decomposition holds for $\dis^{\c{N}}$.
\end{proof}

\begin{Cor}\label{strucure_of_K_N}
Let $Q$ be a finite faithful connected quandle, $a\in Q$ and $\alpha$ be a central congruence. Then: 
\begin{itemize}
\item[(i)] $\alpha=\c{\dis_\alpha}$ and $[a]_{\alpha}$ is an affine latin quandle.
\item[(ii)] $\BlocS{Q}{a}\cong \dis_\alpha\times \dis(Q)_a$ and $\dis^\alpha \cong \dis_\alpha\times \dis^\alpha_a$.
\end{itemize}
\end{Cor}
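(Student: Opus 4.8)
The plan is to deduce everything from Proposition \ref{semiregul groups} applied to the normal subgroup $N=\dis_\alpha\in Norm(Q)$, once we know this subgroup is semiregular. Since $\alpha$ is central, Theorem \ref{central cong} tells us that $\dis_\alpha$ is central in $\dis(Q)$ (in particular abelian) and that $\dis(Q)_a=\dis(Q)_b$ whenever $a\,\alpha\,b$. As $Q$ is connected, $\dis(Q)$ is transitive on $Q$, and a central subgroup of a transitive permutation group is semiregular; hence $N=\dis_\alpha$ is semiregular and Proposition \ref{semiregul groups} applies with this $N$. (We also use throughout that $Q$ is faithful, which is a standing hypothesis needed for Proposition \ref{semiregul groups}.)

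For part (i) I would first check that $\alpha=\c{\dis_\alpha}$. The inclusion $\alpha\leq\c{\dis_\alpha}$ is immediate from the definitions, since $a\,\alpha\,b$ forces $\Lm{a}\Lm{b}^{-1}\in\dis_\alpha$. Conversely, Proposition \ref{semiregul groups}(ii) gives $\c{\dis_\alpha}=\mathcal{O}_{\dis_\alpha}$, and from $\dis_\alpha\leq\dis^\alpha$ (see \eqref{i sottogruppi}) together with \eqref{orbits of kernel} we obtain $\mathcal{O}_{\dis_\alpha}\leq\mathcal{O}_{\dis^\alpha}\leq\alpha$. Thus $\alpha=\c{\dis_\alpha}$, and in particular $[a]_\alpha=[a]_{\c{\dis_\alpha}}$. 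By Proposition \ref{semiregul groups}(i)--(ii) this block is a connected subquandle whose displacement group $\dis([a]_\alpha)\cong\dis_\alpha$ is both semiregular and transitive on $[a]_\alpha$, hence regular; by Lemma \ref{facts}(ii) $[a]_\alpha$ is then principal, and since $\dis_\alpha$ is abelian a principal quandle over it is affine. Finally a finite connected affine quandle is latin (again Lemma \ref{facts}(ii)), so $[a]_\alpha$ is an affine latin quandle.

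For part (ii), Proposition \ref{semiregul groups}(iii), read through the identification $\c{\dis_\alpha}=\alpha$, yields the internal semidirect product decompositions $\BlocS{Q}{a}\cong\dis_\alpha\rtimes\dis(Q)_a$ and $\dis^\alpha\cong\dis_\alpha\rtimes\dis^\alpha_a$. In both cases the normal factor $\dis_\alpha$ lies in the center of $\dis(Q)$, hence is centralised by the complementing subgroup (which is contained in $\dis(Q)$), so the semidirect products are in fact direct, giving $\BlocS{Q}{a}\cong\dis_\alpha\times\dis(Q)_a$ and $\dis^\alpha\cong\dis_\alpha\times\dis^\alpha_a$. The proof is essentially an assembly of the preceding results with no deep difficulty; the one point requiring care is the identification $\alpha=\c{\dis_\alpha}$, since the Galois connection $(\dis,\c{})$ is merely monotone in general, and it is precisely the semiregularity of $\dis_\alpha$ (via Proposition \ref{semiregul groups}(ii)) that forces $\alpha$ to be a closed element of the connection.
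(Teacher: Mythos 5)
Your proof is correct and follows essentially the same route as the paper's: centrality gives semiregularity of $\dis_\alpha$, one identifies $\alpha$ with $\c{\dis_\alpha}$ via $\c{\dis_\alpha}=\mathcal{O}_{\dis_\alpha}\leq\alpha$, and then Proposition \ref{semiregul groups} does the rest. The only cosmetic difference is in (ii), where you note that the central normal factor commutes with its complement, whereas the paper instead uses $\dis(Q)_a=\dis(Q)_b$ (from Theorem \ref{central cong}) to show the complement is normal; both turn the semidirect product into a direct one in a single line.
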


\begin{proof}
Let $[a]$ be the block of $a$ in $\alpha$. The subgroup $\dis_\alpha$ is central (Theorem \ref{central cong}) and therefore semiregular. According to Proposition \ref{semiregul groups}(ii), $\dis_\alpha$ is regular on each block of $\c{\dis_\alpha}$. Therefore, $\c{\dis_\alpha}=\mathcal{O}_{\dis_\alpha}\leq \alpha$, hence $\alpha$ coincides with $\c{\dis_\alpha}$ and so we we can apply Proposition \ref{semiregul groups} to $\alpha$.

(i) The block $[a]$ is an affine latin quandle since it is connected and $\dis_\alpha \cong \dis([a])$ is abelian by Proposition \ref{semiregul groups} (ii).

(ii) The congruence $\alpha$ is central and then by Theorem \ref{central cong} $\dis(Q)_a=\dis(Q)_b$ whenever $a\,\alpha\, b$. Using this, if $h\in \BlocS{Q}{a}$, then $h\dis(Q)_a h^{-1}=\dis(Q)_{h(a)}=\dis(Q)_a$ and so $\dis(Q)_a$ is normal in $\BlocS{Q}{a}$. We can now apply Proposition \ref{semiregul groups}(iii) and conclude that the product is direct. The same argument holds for $\dis^\alpha$.
%
\end{proof}

\begin{Cor}\label{embedding of ker}
Let $Q$ be a finite faithful connected quandle and $\alpha$ be a central congruence. Then $\dis^\alpha$ embeds into $\dis([a]_\alpha)^{n}$, where $n$ is the number of generators of $Q/\alpha$. In particular, $\dis^\alpha$ is abelian and $|\dis^\alpha|$ divides $|[a]_\alpha|^n$.
\end{Cor}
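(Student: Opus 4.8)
The plan is to produce an injective group homomorphism $\dis^\alpha\hookrightarrow\dis([a]_\alpha)^{\,n}$. Granting this, the rest of the statement is immediate: by Corollary~\ref{strucure_of_K_N}(i) the block $[a]_\alpha$ is an affine latin quandle, so $\dis([a]_\alpha)$ is abelian and acts regularly on $[a]_\alpha$, whence $|\dis([a]_\alpha)|=|[a]_\alpha|$; therefore $\dis^\alpha$ is abelian and $|\dis^\alpha|$ divides $|[a]_\alpha|^{\,n}$. Write $G=\dis(Q)$ and $D=\dis_\alpha$. Since $\alpha$ is central, $D$ is central in $G$ (Theorem~\ref{central cong}) and, by Corollary~\ref{strucure_of_K_N}, $\alpha=\c D$, the blocks of $\alpha$ are exactly the $D$-orbits, $D$ acts regularly on each of them, $D\cong\dis([a]_\alpha)$, and for every $b\in Q$ one has the internal direct product decomposition $\dis^\alpha=D\times\dis^\alpha_b$.

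First I would fix $b_1,\dots,b_n\in Q$ whose classes $[b_1]_\alpha,\dots,[b_n]_\alpha$ generate the quandle $Q/\alpha$. For each $i$, the decomposition $\dis^\alpha=D\times\dis^\alpha_{b_i}$ gives the projection homomorphism $p_i\colon\dis^\alpha\to D$ onto the first factor, with $\ker p_i=\dis^\alpha_{b_i}$. Setting $\Phi=(p_1,\dots,p_n)\colon\dis^\alpha\to D^{\,n}$, this is a homomorphism with $\ker\Phi=\bigcap_{i=1}^n\dis^\alpha_{b_i}$, so the whole proof reduces to showing $\bigcap_{i=1}^n\dis^\alpha_{b_i}=1$.

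I would establish this in two steps. Step one: every $k\in\dis^\alpha_{b_i}$ fixes the block $[b_i]_\alpha$ pointwise. Indeed, for $b'\in[b_i]_\alpha$ pick $g\in D$ with $g(b_i)=b'$ (possible since $D$ is regular on the block); as $g$ is central in $G$, $k(b')=kg(b_i)=gk(b_i)=g(b_i)=b'$. Step two: let $k\in\bigcap_{i=1}^n\dis^\alpha_{b_i}$. Then $k$ fixes each $b_i$, so $kL_{b_i}k^{-1}=L_{k(b_i)}=L_{b_i}$, i.e.\ $k$ commutes with $H=\langle L_{b_1},\dots,L_{b_n}\rangle\le\lmlt{Q}$, and by step one $k$ fixes $[b_1]_\alpha\cup\dots\cup[b_n]_\alpha$ pointwise. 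Now let $c\in Q$ be arbitrary. Since $[b_1]_\alpha,\dots,[b_n]_\alpha$ generate $Q/\alpha$, the standard description of a generated subquandle gives $[c]_\alpha=\overline w\bigl([b_j]_\alpha\bigr)$ for some $j$ and some word $\overline w$ in the maps $L_{[b_i]_\alpha}^{\pm1}$; lifting $\overline w$ coordinatewise to the word $w\in H$ in the $L_{b_i}^{\pm1}$ and using $\pi_\alpha(L_{b_i})=L_{[b_i]_\alpha}$, we get $[w(b_j)]_\alpha=[c]_\alpha$, hence $c=g(c')$ with $c'=w(b_j)$ and $g\in D$. Then
\[
k(c)=k\bigl(g\,w(b_j)\bigr)=g\,k\,w(b_j)=g\,w\,k(b_j)=g\,w(b_j)=c,
\]
using that $g$ is central, that $k$ commutes with $w\in H$, and that $k(b_j)=b_j$. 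Hence $k=1$, $\Phi$ is injective, and $\dis^\alpha$ embeds into $D^{\,n}\cong\dis([a]_\alpha)^{\,n}$, as desired.

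I expect step two to be the crux. The delicate point is the passage from ``the classes $[b_i]_\alpha$ generate $Q/\alpha$'' to a concrete factorization $c=g\cdot w(b_j)$ of an arbitrary $c\in Q$: one must apply the description of the subquandle generated by the $[b_i]_\alpha$ in $Q/\alpha$, transport the resulting word along $\pi_\alpha$, and then correct the lift inside a single $\alpha$-block by an element of $D$. This is precisely where the hypotheses enter essentially --- that $Q$ is faithful and $\alpha$ is central is what makes $D$ central in $G$ and regular on the blocks, both of which are used in the final computation.
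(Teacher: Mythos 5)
Your proof is correct and takes essentially the same route as the paper: the paper sends $h\in\dis^\alpha$ to the tuple of its restrictions to the $n$ generating blocks (which land in $\dis([a_i])\cong\dis_\alpha$ via the decomposition $\dis^\alpha=\dis_\alpha\times\dis^\alpha_{a_i}$ of Corollary~\ref{strucure_of_K_N}(ii)), and your projections $p_i$ are the same homomorphism in disguise, with the same kernel $\bigcap_i\dis^\alpha_{b_i}$. Your step two simply makes explicit the point the paper asserts without detail --- that an element fixing the generating blocks pointwise is trivial because $Q$ is generated by their union --- so it is a welcome elaboration rather than a different argument.
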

\begin{proof}
The mapping
\begin{equation}\label{map_1}
\dis^\alpha\longrightarrow \prod_{[a]\in Q/\alpha} \aut{[a]}, \quad h\mapsto \setof{h|_{[a]}}{[a]\in Q/\alpha}
\end{equation}
is a group homomorphism, and according to Proposition \ref{strucure_of_K_N}(ii) $\dis^\alpha\cong \dis_\alpha\times \dis^\alpha_{a}$. Therefore $\dis^\alpha|_{[a]}=\dis_\alpha|_{[a]}\cong  \dis([a])$, so the image of \eqref{map_1} is contained in $\prod_{[a]\in Q/\alpha} \dis([a])$. If $Q/\alpha$ is generated by $[a_1],\ldots,[a_n]$, then $Q$ is generated by $\bigcup_{i=1}^n [a_i]$. So if $h|_{[a]_i}=1$ for every $i=1,\ldots,n$, then $h=1$. Therefore the mapping in \eqref{map_1} is injective. The blocks of $\alpha$ are affine and connected, so $|[a]|=|\dis([a])|$, therefore $|\dis^\alpha|$ divides $|[a]|^n$.
\end{proof}
Corollary \ref{embedding of ker} does not hold for abelian congruences, e.g. for SmallQuandle(12,10) in the \cite{RIG} library.

In \cite[Proposition 7.8]{CP} we extended a result on representation of Mal'cev algebras in terms of their factor with respect to the center. We say that $Q$ is a {\it central extension} of $Q/\alpha$ if there exists an abelian group $A$, $\psi\in \aut{A}$ and $\theta:Q/\alpha\times Q/\alpha\longrightarrow A$ a mapping satisfying some additional conditions, such that $Q\cong (Q/\alpha\times A,\ast)$, where 
\begin{equation}\label{central ext}
(a,s)\ast (b,t)=(a\ast b, (1-\psi)(s)+\psi(t)+\theta_{a,b}),
\end{equation}
for every $a,b\in Q/\alpha$ and every $s,t\in A$.\\
This construction applies to finite faithful connected quandles. 

\begin{Lem}\label{faithful implies latin}
Let $Q$ be a finite faithful connected quandle and $\alpha$ be a central congruence. Then $Q$ is a central extension of $Q/\alpha$. If $Q/\alpha$ is latin, then $Q$ is latin.
\end{Lem}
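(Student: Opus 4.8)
\emph{Plan.} The first assertion is the central extension representation recalled just above: applying \cite[Proposition 7.8]{CP} (in the form, stated there, valid for an arbitrary central congruence of a finite faithful connected quandle) to $Q$ and $\alpha$, one obtains an abelian group $A$, an automorphism $\psi\in\aut{A}$ and a map $\theta\colon Q/\alpha\times Q/\alpha\to A$ with $Q\cong(Q/\alpha\times A,\ast)$, the operation $\ast$ being given by \eqref{central ext}. From now on I would identify $Q$ with this model, and prove the second assertion in it.

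So assume $Q/\alpha$ is latin. Since the left multiplications of a rack are always bijective, to show $Q$ is latin it suffices to show that each right multiplication $R_{(b,t)}$ is bijective, and as the underlying set $Q/\alpha\times A$ is finite it is enough to show $R_{(b,t)}$ is injective. By \eqref{central ext},
\[
R_{(b,t)}(a,s)=\bigl(a\ast b,\ (1-\psi)(s)+\psi(t)+\theta_{a,b}\bigr).
\]
If $R_{(b,t)}(a_1,s_1)=R_{(b,t)}(a_2,s_2)$, then comparing first coordinates gives $a_1\ast b=a_2\ast b$ in $Q/\alpha$, whence $a_1=a_2$ because $R_b$ is injective on the latin quandle $Q/\alpha$; then $\theta_{a_1,b}=\theta_{a_2,b}$, and comparing second coordinates yields $(1-\psi)(s_1)=(1-\psi)(s_2)$. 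Thus the whole argument reduces to checking that $1-\psi$ is injective on $A$.

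To get this, I would use the block structure of the central congruence $\alpha$. Putting $b=a$ (the same class) in \eqref{central ext} and imposing idempotency $(a,s)\ast(a,s)=(a,s)$ of $Q$ forces $\theta_{a,a}=0$, so the block $[a]_\alpha=\{a\}\times A$ is, via the second coordinate, isomorphic to $\aff(A,\psi)$. By Corollary \ref{strucure_of_K_N}(i) the blocks of a central congruence are affine and latin, hence $\aff(A,\psi)$ is latin, which by Lemma \ref{facts}(ii) means $Fix(\psi)=1$; equivalently $1-\psi$ has trivial kernel and so, $A$ being finite, $1-\psi\in\aut{A}$. Therefore $s_1=s_2$ above, $R_{(b,t)}$ is injective, and $Q$ is latin.

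The only genuinely delicate point is the identification of the abstract data $(A,\psi,\theta)$ coming out of the central extension formula \eqref{central ext} with the concrete affine structure on the blocks $[a]_\alpha$ furnished by Corollary \ref{strucure_of_K_N}; once the normalization $\theta_{a,a}=0$ is extracted from idempotency, the remainder is bookkeeping with \eqref{central ext} together with the elementary one–variable fact that $\aff(A,\psi)$ is latin exactly when $1-\psi$ is bijective.
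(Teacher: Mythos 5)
Your proposal is correct and follows essentially the same route as the paper: invoke Proposition 7.8 of \cite{CP} to obtain the central extension model \eqref{central ext}, observe via Corollary \ref{strucure_of_K_N} that the blocks are affine latin so that $1-\psi\in\aut{A}$, and conclude that each $R_{(b,t)}$ is bijective when $Q/\alpha$ is latin. You merely spell out two points the paper leaves implicit (the normalization $\theta_{a,a}=0$ identifying a block with $\aff(A,\psi)$, and the coordinatewise injectivity check), which is fine.
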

\begin{proof}
By Lemma \ref{strucure_of_K_N}, $[a]_\alpha$ is a latin quandle and $\dis_\alpha$ is regular on each block of $\alpha$. Since $Q$ is connected, $Q/\alpha$ is connected. So we can apply Proposition 7.8 of \cite{CP} to deduce that $Q$ is a central extension of $Q/\alpha$ as in \eqref{central ext}. Since $[a]_{\alpha}$ is a latin quandle, $1-\psi\in \aut{A}$. Hence if $Q/\alpha$ is latin, then 
$$R_{(b,t)}(a,s)=(R_b(a),(1-\psi)(s)+\psi(t)+\theta_{a,b}),$$
is bijective for every $(b,t)\in Q/\alpha\times A$, and then $Q$ is latin.
\end{proof}

\begin{Pro}\label{trans of 2step nil}
Let $Q$ be a finite faithful connected quandle of nilpotency length $2$. Then $Q$ is latin and $\dis^{\zeta_Q}\cong Z(\dis(Q))\times Fix(\widehat{L}_a)$ for every $a\in Q$.
 \end{Pro}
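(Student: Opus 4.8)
\emph{Plan.} I would split the argument into two essentially independent parts: first that $Q$ is latin, and then the identification of the two factors of $\dis^{\zeta_Q}$, both of which rest on the single structural input that $\gamma_1(Q)$ is central. Since $Q$ has nilpotency length $2$ we have $\gamma_2(Q)=[\gamma_1(Q),1_Q]=0_Q$, so $\gamma_1(Q)$ is a central congruence and in particular $\gamma_1(Q)\le\zeta_Q$. By Proposition \ref{ker of [1,1]} the quotient $Q/\gamma_1(Q)$ is affine, and being a quotient of a connected quandle it is connected, hence latin by Lemma \ref{facts}(ii). Applying Lemma \ref{faithful implies latin} to the central congruence $\alpha=\gamma_1(Q)$ then gives that $Q$ itself is latin. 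From now on write $G=\dis(Q)$; as $Q$ is faithful, \eqref{zeta_Q_Con_zeta_Q} gives $\zeta_Q=\c{Z(G)}$ and $\dis(Q)_a=Fix(\widehat{L}_a)$ for every $a\in Q$.

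Next I would identify $\dis_{\zeta_Q}$. The subgroup $Z(G)$ is centralized by $G=\dis(Q)$ and preserved by the automorphism $\widehat{L}_a$ of $G$, hence it is normal in $\lmlt{Q}=G\langle L_a\rangle$ and contained in $G$, so $Z(G)\in Norm(Q)$. Moreover $G$ acts transitively on $Q$ because $Q$ is connected, and a central subgroup of a transitive group is semiregular, so $Z(G)$ is semiregular. Proposition \ref{semiregul groups}(i) applied to $N=Z(G)$, together with $\c{N}=\zeta_Q$, yields $\dis_{\zeta_Q}=Z(G)$.

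It remains to pin down the stabilizer factor. Since $\gamma_1(Q)\le\zeta_Q$, every $\zeta_Q$-block is a union of $\gamma_1(Q)$-blocks, so any permutation fixing each $\gamma_1(Q)$-block fixes each $\zeta_Q$-block; that is, $\dis^{\gamma_1(Q)}\le\dis^{\zeta_Q}$. By Proposition \ref{ker of [1,1]} the left-hand side equals $\gamma_1(G)$, so $\gamma_1(G)\le\dis^{\zeta_Q}$. The Derived Subgroup Lemma (Theorem \ref{stabilizer in derived}) then gives $\dis(Q)_a\le\gamma_1(G)\le\dis^{\zeta_Q}$, whence $\dis^{\zeta_Q}_a=\dis^{\zeta_Q}\cap\dis(Q)_a=\dis(Q)_a=Fix(\widehat{L}_a)$. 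Finally $\zeta_Q$ is a central congruence of the faithful connected quandle $Q$, so Corollary \ref{strucure_of_K_N}(ii) gives $\dis^{\zeta_Q}\cong\dis_{\zeta_Q}\times\dis^{\zeta_Q}_a\cong Z(\dis(Q))\times Fix(\widehat{L}_a)$, as claimed.

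The two load-bearing points are that $Z(\dis(Q))$ is semiregular, which is exactly what lets Proposition \ref{semiregul groups}(i) identify $\dis_{\zeta_Q}$ with the \emph{whole} of $Z(\dis(Q))$ rather than merely a subgroup, and the inclusion $\dis(Q)_a\le\dis^{\zeta_Q}$; the latter is where the nilpotency-length-$2$ hypothesis genuinely enters, through $\gamma_1(Q)\le\zeta_Q$ combined with the Derived Subgroup Lemma. Everything else is assembling the cited structural results, with one small routine verification — that $\beta\mapsto\dis^\beta$ is monotone — that I would state explicitly but not belabour.
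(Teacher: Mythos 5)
Your proof is correct, and its overall skeleton is the paper's: establish latinity via Lemma \ref{faithful implies latin} applied to a central congruence with latin quotient, identify $\dis_{\zeta_Q}$ with $Z(\dis(Q))$ via semiregularity and Proposition \ref{semiregul groups}, and conclude with Corollary \ref{strucure_of_K_N}(ii). There are two points where you diverge in the details. First, for latinity you apply Lemma \ref{faithful implies latin} to $\gamma_1(Q)$ (central because the nilpotency length is $2$, with affine hence latin quotient by Proposition \ref{ker of [1,1]}), whereas the paper applies it to $\zeta_Q$ after observing that $Q/\zeta_Q$ is abelian, hence affine and latin; these are interchangeable. Second, and more substantively, to identify the stabilizer factor the paper notes that latinity of $Q/\zeta_Q$ forces $\BlocS{Q}{a}=\pi^{-1}_{\zeta_Q}(\dis(Q/\zeta_Q)_{[a]})=\dis^{\zeta_Q}$ and then reads off $\dis^{\zeta_Q}\cong\dis_{\zeta_Q}\times\dis(Q)_a$ from the block-stabilizer decomposition, while you instead prove the inclusion $\dis(Q)_a\le\gamma_1(\dis(Q))=\dis^{\gamma_1(Q)}\le\dis^{\zeta_Q}$ using the Derived Subgroup Lemma (Theorem \ref{stabilizer in derived}), Proposition \ref{ker of [1,1]}, and monotonicity of $\beta\mapsto\dis^\beta$, so that $\dis^{\zeta_Q}_a=\dis(Q)_a$. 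Your route leans on a heavier result already available in the paper and is slightly longer; the paper's is the more direct use of the machinery just built, but both are sound and yield the same conclusion.
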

 \begin{proof} The center of $Q$ is $\c {Z(\dis(Q))}$ by \eqref{zeta_Q_Con_zeta_Q}. The factor $Q/\zeta_Q$ is finite, abelian and connected, hence  $\dis(Q/\zeta_Q)$ is abelian (Theorem \ref{central cong}). In particular $Q/\zeta_Q$ is affine and latin \cite[Theorem 7.3]{hsv}. According to Proposition \ref{faithful implies latin}, $Q$ is latin. 
 The stabilizer of the block $[a]$ coincides with the kernel $\dis^{\zeta_Q}$, indeed
 \begin{equation}\label{block_stab=kernel}
\BlocS{Q}{a}=\pi^{-1}_{\zeta_Q}(\dis(Q/\zeta_Q)_{[a]})=\pi^{-1}_{\zeta_Q}(1)=\dis^{\zeta_Q}.
\end{equation}
By Corollary \ref{semiregul groups} then $\dis_{\zeta_Q}=Z(\dis(Q)$ and now we can conclude by Corollary \ref{strucure_of_K_N}(ii) that $\dis^{\zeta_Q}\cong Z(\dis(Q))\times \dis(Q)_a=Z(\dis(Q))\times Fix(\widehat{L}_a)$. 
 \end{proof}
\section{Quandles of prime power order}\label{p-racks}
\subsection*{The \disp\text{} group and its center}
We now turn our attention to connected quandles of order a power of a prime and we call them {\it p-quandles}. They have a minimal coset representation as $\Q(\dis(Q),\dis(Q)_a,\widehat{\Lm{a}})$ (see \eqref{min rep}). In this case $\dis(Q)$ is a $p$-group according to \cite[Corollary 5.2]{GiuThe} and $Q$ is nilpotent \cite[Corollary 6.6]{CP}. 

%
We will exploit the fact that every p-group has a non-trivial center. The center, $Z(\dis(Q))$, is a non trivial semiregular subgroup which is normal in $\lmlt{Q}$, hence its order must divide $|Q|$ and hence $| Z(\dis(Q)) |=p^i$ with $1 \leq i \leq n$. Our aim is to find some bound on the order of $\dis(Q)$ by discussing the possible orders of $Z(\dis(Q))$. In turn any bound found can be translated in properties of the quandle via its minimal representation.\\ 
\noindent Let us start by seeing what happens when $| Z(\dis(Q)) | = | Q |$.
\begin{Pro}
\label{order_of_Z=Q_iff_Q_affine} Let $Q$ a finite connected quandle. Then $Q$ is affine if and only if $| \Z | = | Q |$.
\end{Pro}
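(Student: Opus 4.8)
The plan is to prove both implications using the minimal coset representation $Q \cong \Q(\dis(Q), \dis(Q)_a, \widehat{L_a})$ together with the Derived Subgroup Lemma (Theorem \ref{stabilizer in derived}) and the characterization of the center from \eqref{zeta_Q_Con_zeta_Q}.

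\medskip

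\emph{The easy direction: $Q$ affine $\Rightarrow |Z(\dis(Q))| = |Q|$.} If $Q$ is affine and connected, then $Q$ is principal and latin by Lemma \ref{facts}(ii), so $\dis(Q)_a = 1$ for every $a \in Q$, and the minimal coset representation collapses to $Q \cong \Q(\dis(Q), \widehat{L_a})$ with $\dis(Q)$ acting regularly on $Q$; in particular $|\dis(Q)| = |Q|$. Moreover, since $Q$ is connected and affine, $\dis(Q)$ is abelian by Theorem \ref{central cong}, so $Z(\dis(Q)) = \dis(Q)$ and $|Z(\dis(Q))| = |\dis(Q)| = |Q|$.

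\medskip

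\emph{The converse: $|Z(\dis(Q))| = |Q| \Rightarrow Q$ affine.} Set $G = \dis(Q)$ and $Z = Z(G)$. Since $Z$ is semiregular on $Q$ (being central in the transitive group $\lmlt{Q}$), the hypothesis $|Z| = |Q|$ forces $Z$ to act regularly on $Q$; in particular $Z$ is transitive, hence so is $G$, and therefore $G = G_a Z$ with $G_a \cap Z = Z_a = 1$. Counting gives $|G| = |G_a|\,|Z| = |G_a|\,|Q|$. On the other hand, because $G$ acts transitively on $Q$ we have $|Q| = |G|/|G_a| = |Z|$, which is consistent; the point is now to show $G_a = 1$. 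By the Derived Subgroup Lemma, $G_a \leq \gamma_1(G)$. But $Z$ is central of index $|G_a|$ in $G$, and a group with central subgroup of finite index $m$ has derived subgroup of order dividing... more directly: $G/Z$ has order $|G_a|$ and is abelian if we can show $G$ is nilpotent of class $\leq 2$ — actually the cleanest route is to observe that since $Z$ has index $|G_a|$ in $G$ and $Z$ is central, $G/Z$ is abelian, so $\gamma_1(G) = [G,G] \leq Z$; combined with $G_a \leq \gamma_1(G) \leq Z$ we get $G_a \leq Z$, so $G_a = Z_a = 1$. Hence $\dis(Q)_a = 1$, so $Q$ is principal by Lemma \ref{facts}(ii), and since $G_a = 1$ with $G$ abelian (because $\gamma_1(G) \leq Z$ and $\dis(Q)$ is then nilpotent of class $\le 2$ with trivial point stabilizer — apply Corollary \ref{on 2-step nilpotency}, or directly: $G = GZ/\!\sim$... ), one concludes $G$ is abelian, so $Q = \Q(G, \widehat{L_a})$ is affine.

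\medskip

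The main obstacle is the final step of forcing $\dis(Q)$ itself to be abelian rather than merely nilpotent of class $2$; I expect the clean way around it is precisely Corollary \ref{on 2-step nilpotency}: once $\gamma_1(G) \leq Z(G)$ we know $G$ is nilpotent of class at most $2$, so $Q$ is principal, and then $\dis(Q)_a = 1$ together with $\zeta_Q = \c{Z(\dis(Q))} = \c{\dis(Q)} = 1_Q$ (using \eqref{zeta_Q_Con_zeta_Q} and $|Z(G)| = |Q|$) says $Q$ is abelian, whence $\dis(Q)$ is abelian by Theorem \ref{central cong} and $Q$ is affine. I would double-check that $\c{Z(\dis(Q))}$ really equals $1_Q$ when $Z(\dis(Q))$ is regular, which follows since $L_aL_b^{-1} \in Z(\dis(Q))$ for all $a,b$ forces $Z(\dis(Q)) \supseteq \dis(Q)$, i.e. $G$ abelian — closing the loop.
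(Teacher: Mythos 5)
Your forward direction is fine. The converse, however, hinges on the claim that $G/Z(G)$ is abelian, and the justification you give --- ``since $Z$ has index $|G_a|$ in $G$ and $Z$ is central, $G/Z$ is abelian'' --- is a non sequitur: a central subgroup of finite index does not force the quotient to be abelian (any centerless nonabelian group already refutes the general principle), and here $G/Z\cong G_a$, whose commutativity is precisely what you do not yet know. You yourself observe that the claim amounts to $G$ being nilpotent of class $\le 2$, so asserting it outright is circular; the Derived Subgroup Lemma only gives $G_a\le\gamma_1(G)$, which combines with nothing until $\gamma_1(G)\le Z(G)$ is actually established. As written, the main chain of the converse does not close.

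That said, your final ``closing the loop'' sentence contains the correct proof, and it is essentially the paper's. Since $Z=Z(\dis(Q))$ is central in the transitive group $\lmlt{Q}$ it is semiregular, and it lies in $Norm(Q)$, so the hypothesis $|Z|=|Q|$ makes it regular, i.e.\ $\mathcal{O}_Z=1_Q$. The standard inclusion $\mathcal{O}_N\le \c{N}$ (for $b=n(a)$ one has $L_bL_a^{-1}=n\,\widehat{L_a}(n)^{-1}\in N$, using $\widehat{L_a}$-invariance of $N$) then yields $L_aL_b^{-1}\in Z$ for all $a,b$, hence $\dis(Q)\le Z(\dis(Q))$, so $\dis(Q)$ is abelian and regular and $Q\cong\aff(\dis(Q),\widehat{L_a})$ is affine. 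This two-line argument is what the paper's proof means by ``a regular central subgroup provides an affine representation,'' and it makes the decomposition $G=G_aZ$, Theorem \ref{stabilizer in derived} and Corollary \ref{on 2-step nilpotency} unnecessary. If you restructure the converse around that observation and delete the unjustified $G/Z$ step, the proof is complete.
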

\begin{proof}
If $Q$ is affine, then $\dis(Q)=Z(\dis(Q))$ and it is regular on $Q$, i.e. $|Q|=|Z(\dis(Q)|$. On the other hand, if $|Z(\dis(Q))|=|Q|$, then $Z(\dis(Q))$ is regular on $Q$ and then it provides an affine representation for $Q$.
%
%
%
\end{proof}
It is a well known fact that if a group is cyclic over its center then it is abelian. Using the following Proposition we get a similar result for faithful connected nilpotent quandles.
\begin{Pro}\label{generators_of_trans} 
Let $Q$ be a faithful connected nilpotent quandle. If $ \zeta_1,\ldots,\zeta_n$ generate $Z(\dis(Q))$ and $\psi_1 \dis^{\zeta_Q},\ldots ,\psi_m\dis^{\zeta_Q} $ generate $\dis(Q/\zeta_Q)$ then $\zeta_1,\ldots,\zeta_n$, $\psi_1,$ $\ldots,\psi_m$ generate $\dis(Q)$.
\end{Pro}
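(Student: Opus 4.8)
The plan is to argue that the subgroup $N = \langle \zeta_1,\ldots,\zeta_n, \psi_1,\ldots,\psi_m\rangle \leq \dis(Q)$ already equals $\dis(Q)$, by combining what we know about $Z(\dis(Q))$ sitting inside $N$ with a lifting argument through the quotient $\dis(Q) \to \dis(Q/\zeta_Q)$. Write $G = \dis(Q)$, $Z = Z(\dis(Q))$, and recall from \eqref{zeta_Q_Con_zeta_Q} that $\zeta_Q = \c{Z}$ for a faithful $Q$, and from \eqref{N in O_N} together with the definition of $\dis^\alpha$ that the kernel of $\pi_{\zeta_Q}\colon G \to \dis(Q/\zeta_Q)$ is $\dis^{\zeta_Q}$. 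The first step is to observe that, since the $\psi_i \dis^{\zeta_Q}$ generate $\dis(Q/\zeta_Q) = G/\dis^{\zeta_Q}$, the images of $\psi_1,\ldots,\psi_m$ under $\pi_{\zeta_Q}$ generate the quotient; hence $N \dis^{\zeta_Q} = G$, i.e.\ $N$ surjects onto $G/\dis^{\zeta_Q}$.

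The second step is to upgrade this to $N \supseteq \dis^{\zeta_Q}$, which will finish the proof since then $N = N\dis^{\zeta_Q} = G$. Here I would use Corollary \ref{strucure_of_K_N}(ii): because $\zeta_Q$ is a central congruence of the faithful connected quandle $Q$, we have $\dis^{\zeta_Q} \cong \dis_{\zeta_Q} \times \dis^{\zeta_Q}_a$ and, by Proposition \ref{semiregul groups}(i) applied to the semiregular normal subgroup $\dis_{\zeta_Q}$ (which is $\c{}$-associated to $\zeta_Q$ and central, being a subgroup of $Z$), one gets $\dis_{\zeta_Q} = Z$. So $\dis^{\zeta_Q} = Z \cdot \dis^{\zeta_Q}_a$. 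The part $Z = \langle \zeta_1,\ldots,\zeta_n\rangle$ is already inside $N$ by hypothesis. It remains to capture $\dis^{\zeta_Q}_a = \dis^{\zeta_Q} \cap \dis(Q)_a$, and this is where nilpotence of $Q$ enters: I would argue by induction on the nilpotency length, or alternatively invoke the Frattini-style argument that $\dis^{\zeta_Q}$ consists of "non-generators" relative to the central quotient — concretely, since $\dis^{\zeta_Q}$ lies in the preimage of $1$ and $G$ is a $p$-group when $Q$ is a $p$-quandle, one has $\gamma_1(G) \geq \dis^{\zeta_Q}$ or at least $\dis^{\zeta_Q} \leq Z \cdot \Phi(G)$, so modulo the center the extra factor contributes nothing to a generating set. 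The cleanest route is: show $\dis^{\zeta_Q} \leq Z\,\Phi(G)$, so that $G = N\dis^{\zeta_Q} \leq N Z \Phi(G) = N\Phi(G)$ (as $Z \leq N$), and then conclude $N = G$ because $\Phi(G)$ is the set of non-generators of the nilpotent group $G$.

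The main obstacle I anticipate is precisely pinning down $\dis^{\zeta_Q}$ (equivalently $\dis^{\zeta_Q}_a = Fix(\widehat{L}_a) \cap \dis^{\zeta_Q}$) inside $Z\,\Phi(G)$: one must use that $Q$ is \emph{nilpotent}, not merely connected and faithful, since for a general faithful connected quandle $\dis^{\zeta_Q}$ need not be small. The argument should go: $Q/\zeta_Q$ is again faithful connected nilpotent of smaller length, induction gives the statement there, and one then lifts generators, the point being that the commutator theory of \cite{CP} guarantees $\dis^{\zeta_Q} \leq \gamma_1(G) Z$ or that $[\dis^{\zeta_Q}, G] $ is smaller, letting a Nakayama/Frattini argument close the induction. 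Everything else — the surjectivity onto the quotient and the identification $\dis_{\zeta_Q} = Z$ — is a direct citation of the results already proved above.
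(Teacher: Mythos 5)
Your overall architecture is the right one and in fact matches the paper's: decompose $\dis^{\zeta_Q}$ as $Z(\dis(Q))\cdot \dis^{\zeta_Q}_a$ via Corollary \ref{strucure_of_K_N}, note that the $\psi_i$ together with $\dis^{\zeta_Q}$ generate $\dis(Q)$, and then kill the remaining factor by a Frattini (non-generator) argument. The identification $\dis_{\zeta_Q}=Z(\dis(Q))$ via Proposition \ref{semiregul groups} and \eqref{zeta_Q_Con_zeta_Q} is also correct. However, the decisive step — why $\dis^{\zeta_Q}_a$ lies in $\Phi(\dis(Q))$ — is exactly the point you leave open, and the candidate justifications you sketch do not work: ``$\dis^{\zeta_Q}$ lies in the preimage of $1$'' is just the definition of the kernel and gives no containment in $\gamma_1(\dis(Q))$; the appeal to $\dis(Q)$ being a $p$-group is unavailable (the statement is for arbitrary nilpotent quandles) and would not suffice anyway; and the inclusion $\gamma_1(\dis(Q))\geq \dis^{\zeta_Q}$ is false in general (take $Q$ affine: $\dis^{\zeta_Q}=\dis(Q)$ while $\gamma_1(\dis(Q))=1$). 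No induction on the nilpotency length is needed either.

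The missing ingredient is Theorem \ref{stabilizer in derived} (the Derived subgroup Lemma), already proved in the paper: for any finite connected quandle, $\dis(Q)_a\leq \gamma_1(\dis(Q))$. Since $Q$ is nilpotent, $\dis(Q)$ is a nilpotent group, so $\gamma_1(\dis(Q))\leq \Phi(\dis(Q))$, whence $\dis^{\zeta_Q}_a\leq \dis(Q)_a\leq \Phi(\dis(Q))$ and your inclusion $\dis^{\zeta_Q}\leq Z(\dis(Q))\,\Phi(\dis(Q))$ follows at once; the Frattini argument then closes the proof exactly as you describe. So the gap is a single citation away from being filled, but as written the crux is asserted rather than proved, and two of the three routes you propose for it are incorrect.
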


\begin{proof}
By Corollary \ref{strucure_of_K_N}, if $\xi_{1}, \dots ,\xi_{t}$ generate $\dis(Q)_a$ then $\psi_{1}, \dots,\psi_{m}$,  $\zeta_{1}, \dots, \zeta_{n},\xi_{1}$,  $\dots ,\xi_{t}$ generate $\dis(Q)$. Now $\dis(Q)$ is a nilpotent group and by Corollary \ref{stabilizer in derived}  $\dis(Q)_a\leq \gamma_1(\dis(Q)) \leq \frattini (\dis(Q))$. So, $\dis(Q)$ is generated by $\zeta_1,\ldots,\zeta_n,\psi_1,\ldots,\psi_m$.
\end{proof}

\begin{Cor}
\label{quotient_cannot_be_cyclic} Let $Q$ be a finite faithful connected nilpotent quandle. If $\dis(Q/\zeta_Q)$  is cyclic then $Q$ is affine.
\end{Cor}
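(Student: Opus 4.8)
The plan is to reduce the statement to the classical group-theoretic fact that a group which is cyclic modulo its center is abelian, using Proposition~\ref{generators_of_trans} as the bridge between the quandle and its displacement group. First I would set $G=\dis(Q)$ and recall that, since $Q$ is faithful, connected and nilpotent, the hypotheses of Proposition~\ref{generators_of_trans} are available. Pick generators $\zeta_1,\ldots,\zeta_n$ of $Z(G)$ and, using the cyclicity hypothesis, a single generator $\psi_1\dis^{\zeta_Q}$ of $\dis(Q/\zeta_Q)$ (so $m=1$). By Proposition~\ref{generators_of_trans}, $G$ is generated by $\zeta_1,\ldots,\zeta_n,\psi_1$. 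But $\zeta_1,\ldots,\zeta_n$ are central, so $G/Z(G)$ is generated by the single image $\psi_1 Z(G)$, hence $G/Z(G)$ is cyclic; therefore $G$ is abelian.

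Once $G=\dis(Q)$ is abelian, I would invoke Theorem~\ref{central cong} (its last sentence: a connected quandle is abelian iff its displacement group is abelian) to conclude that $Q$ is abelian, i.e. $\zeta_Q=1_Q$. Then $Q/\zeta_Q=Q$, so the cyclicity of $\dis(Q/\zeta_Q)=\dis(Q)=G$ means $G$ is cyclic and regular on $Q$ (a connected quandle is abelian, so by Theorem~\ref{central cong} and the regularity of an abelian transitive group, or directly since the point stabilizers sit in $\gamma_1(G)=1$ by Theorem~\ref{stabilizer in derived}, they are trivial). A cyclic regular subgroup of $\lmlt Q$ normal in $\lmlt Q$ furnishes an affine representation of $Q$ over the cyclic group $G$, exactly as in the proof of Proposition~\ref{order_of_Z=Q_iff_Q_affine}. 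Hence $Q$ is affine.

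I do not expect a serious obstacle here: the corollary is essentially a packaging of Proposition~\ref{generators_of_trans} plus the abelianity criterion of Theorem~\ref{central cong}. The one point that needs a little care is making sure the passage from ``$\dis(Q/\zeta_Q)$ cyclic'' to ``$G/Z(G)$ cyclic'' is legitimate even when $n$ (the number of central generators) is large — but this is immediate because all the $\zeta_i$ vanish in $G/Z(G)$, leaving only the single class $\psi_1 Z(G)$. After that, the standard implication ``cyclic over the center $\Rightarrow$ abelian'' and Proposition~\ref{order_of_Z=Q_iff_Q_affine}-style reasoning finish the argument.
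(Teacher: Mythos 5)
Your proposal is correct and is exactly the argument the paper intends: the corollary is stated there without proof, as an immediate consequence of Proposition~\ref{generators_of_trans} (with $m=1$, so $\dis(Q)/Z(\dis(Q))$ is cyclic and hence $\dis(Q)$ is abelian), followed by Theorem~\ref{central cong} and Proposition~\ref{order_of_Z=Q_iff_Q_affine}. One small slip in an unnecessary detour: in the paper's convention $1_Q=Q\times Q$ is the \emph{largest} congruence, so ``$Q$ abelian'' means $\zeta_Q=1_Q$ and $Q/\zeta_Q$ is a one-point quandle, not $Q$ itself; but you do not need that step at all, since once $\dis(Q)$ is abelian it is regular (abelian and transitive), hence $|Z(\dis(Q))|=|Q|$ and Proposition~\ref{order_of_Z=Q_iff_Q_affine} gives affineness directly.
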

%

%
%
%
\subsection*{Classification of connected Quandles of Order p and p$^2$}
%
 Simple quandles are classified by Andruskiewitsch and Gra\~{n}a in \cite{AG2003}, in particular in the prime power size case. The classification of quandles of size $p$ and $p^2$ is already known \cite{EGS,Grana_p2}. Let us give here a self contained proof of the same results, using a different approach, based on the analysis of the subgroups of $\dis(Q)$ normal in $\lmlt{Q}$ and the conditions on the center of $\dis(Q)$.
%
%
%
\begin{The}[{\cite[Theorem 3.9]{AG2003}}]
\label{classification_simple_quandles}Every simple quandle $Q$ of order $p^{n}$, $p$ a prime, is isomorphic to an affine latin quandle $\aff ( \mathbb{Z}_{p}^{n}, f )$ where $f$ acts irreducibly.
\end{The}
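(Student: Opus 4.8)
The plan is to exploit the tools developed in the preceding sections, in particular the interplay between $Con(Q)$ and $Norm(Q)$ and the properties of the center of $\dis(Q)$ established for $p$-quandles. A simple quandle $Q$ is connected (simple quandles have no nontrivial congruences, but one must first note connectedness follows, e.g. since the orbit decomposition under $\dis(Q)$ is a congruence, hence either $0_Q$ or $1_Q$; as $|Q|=p^n>1$ it cannot be $0_Q$, so $Q$ is connected). Then $\dis(Q)$ is a $p$-group by \cite[Corollary 5.2]{GiuThe}. First I would look at $Z(\dis(Q))$: it is a nontrivial semiregular subgroup normal in $\lmlt{Q}$, so the congruence $\mathcal{O}_{Z(\dis(Q))}$ is defined and nontrivial (its blocks have size $|Z(\dis(Q))|>1$), hence it must equal $1_Q$ by simplicity. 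Thus $Z(\dis(Q))$ is transitive, and being semiregular it is regular, so $|Z(\dis(Q))|=|Q|=p^n$. By Proposition \ref{order_of_Z=Q_iff_Q_affine}, $Q$ is affine; equivalently $\dis(Q)=Z(\dis(Q))$ is abelian of order $p^n$ regular on $Q$.

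Next I would pin down the group. Since $Q$ is connected and affine, by Lemma \ref{facts}(ii) it is latin, and the regular abelian group $A=\dis(Q)$ together with $f=\widehat{L_a}$ gives $Q\cong\aff(A,f)$ with $Fix(f)=1$. It remains to show $A\cong\mathbb{Z}_p^n$ and that $f$ acts irreducibly. For irreducibility: congruences of an affine quandle $\aff(A,f)$ correspond to $f$-invariant subgroups of $A$ (the block of $0$ under a congruence is such a subgroup, and conversely each $f$-invariant subgroup $B\leq A$ yields the congruence given by cosets of $B$, which is compatible with the operation $a*b=(1-f)(a)+f(b)$ since $1-f$ and $f$ preserve $B$). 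Simplicity of $Q$ then forces $A$ to have no proper nontrivial $f$-invariant subgroup.

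The remaining point — and the one I expect to require the most care — is deducing that a finite abelian $p$-group $A$ admitting a fixed-point-free automorphism $f$ with no proper nontrivial invariant subgroup must be elementary abelian, i.e. $A\cong\mathbb{Z}_p^n$. Here I would argue that $pA=\{pa: a\in A\}$ is a characteristic, hence $f$-invariant, subgroup; if $pA\neq 0$ then by the no-invariant-subgroup condition $pA=A$, which is impossible for a nontrivial finite $p$-group since $|A/pA|\geq p$. Therefore $pA=0$ and $A$ is an elementary abelian $p$-group, so $A\cong\mathbb{Z}_p^n$ and $f\in\aut{\mathbb{Z}_p^n}=GL_n(\mathbb{F}_p)$ acts with no invariant subspace, i.e. irreducibly. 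Finally $Fix(f)=1$ is automatic from irreducibility (the fixed space is $f$-invariant and proper since $f\neq\mathrm{id}$, as $n\geq 1$ forces $f\neq\mathrm{id}$ when $Fix(f)=1$; one double-checks the degenerate case $n=1$ separately, where $f$ is multiplication by a nonzero non-one scalar). Assembling these steps gives $Q\cong\aff(\mathbb{Z}_p^n,f)$ with $f$ irreducible, as claimed.
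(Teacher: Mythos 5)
Your argument is correct, but it follows a genuinely different route from the paper's. The paper deduces from simplicity that $\dis(Q)$ is a \emph{minimal} normal subgroup of $\lmlt{Q}$ (any proper nontrivial $N\in Norm(Q)$ would produce a proper nontrivial congruence via the Galois connection), and then uses the group-theoretic fact that a minimal normal $p$-subgroup is elementary abelian; regularity, the order $p^n$ and the rank $n$ follow immediately, and irreducibility comes from the same dictionary between invariant subgroups and congruences that you invoke. You instead run the argument through the center: $Z(\dis(Q))$ is a nontrivial semiregular member of $Norm(Q)$, its orbit congruence must be $1_Q$, so the center is regular, $|Z(\dis(Q))|=|Q|$, and Proposition \ref{order_of_Z=Q_iff_Q_affine} gives affineness; you then recover elementary abelianness by applying simplicity once more to the characteristic (hence $f$-invariant) subgroup $pA$. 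Both proofs rest on the same two pillars --- orbit congruences of subgroups in $Norm(Q)$ and the correspondence between $f$-invariant subgroups and congruences of $\aff(A,f)$ --- but yours avoids the minimal-normal-subgroup fact at the cost of one extra use of simplicity, staying entirely within the center-based machinery of Sections \ref{preliminary} and \ref{p-racks}. Two small points of hygiene: your justification of connectedness is not quite right as phrased, since $|Q|>1$ alone does not prevent $\mathcal{O}_{\dis(Q)}=0_Q$ (that happens exactly for the trivial quandle, which is excluded because it is not simple once $|Q|>2$ --- a degenerate case the paper also leaves implicit); and your parenthetical deriving $Fix(f)=1$ is circular as written --- the clean statement is that $Fix(f)$ is an $f$-invariant subgroup, hence $0$ or $A$, and $Fix(f)=A$ would make $Q$ trivial, again contradicting simplicity.
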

\begin{proof}
If $Q$ is simple then $\dis(Q)$ is a minimal normal subgroup of $\lmlt{Q}$, hence  $\dis(Q)$ being a $p$-group, it must be elementary abelian. Since it is abelian and transitive it is regular and has order $p^{n}$ and then has rank $n$. Subgroups of $\dis(Q)$ invariant under $\widehat{\Lm{a}}$ provide congruences of $Q$. Therefore, $Q \cong \aff(  \mathbb{Z}_{p}^{n},f)$ and $f$ acts irreducibly. 
\end{proof}
The classification of simple quandles of order a power of a prime yields immediately a characterisation of connected quandles of prime order.
\begin{The}[{\cite[Lemma 3]{EGS}}]
\label{quandles_of_prime_order} Connected quandle of prime order are affine.
\end{The}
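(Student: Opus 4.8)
The plan is to deduce this directly from the structural results already established, since a connected quandle of prime order $p$ is in particular a $p$-quandle, hence nilpotent with a $p$-group displacement group. First I would observe that for $|Q|=p$ the displacement group $\dis(Q)$ is a transitive permutation group of degree $p$, so its order is divisible by $p$, and since it is a $p$-group its center $Z(\dis(Q))$ is a nontrivial semiregular subgroup of a transitive group of prime degree. Semiregularity forces $|Z(\dis(Q))|$ to divide $p$, and nontriviality then gives $|Z(\dis(Q))|=p=|Q|$.

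Having located the center, the conclusion is immediate from Proposition \ref{order_of_Z=Q_iff_Q_affine}: since $|Z(\dis(Q))|=|Q|$, the quandle $Q$ is affine. (Equivalently, one can argue that $Z(\dis(Q))$, being semiregular of order $|Q|$, is regular and hence supplies an affine representation of $Q$, as in the proof of that proposition.) This is essentially a one-line argument once the preliminary machinery is in place.

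Alternatively, and perhaps more in the spirit of ``deriving it from the simple-quandle classification'' as the surrounding text suggests, I would note that a connected quandle of prime order $p$ has no proper nontrivial congruences: by Lemma \ref{facts}(i), $|Q|=|Q/\alpha|\,|[a]_\alpha|$ and $p$ is prime, so any congruence is either $0_Q$ or $1_Q$. Hence $Q$ is simple, and Theorem \ref{classification_simple_quandles} applied with $n=1$ gives $Q\cong\aff(\mathbb{Z}_p,f)$ for some $f$ acting irreducibly on $\mathbb{Z}_p$ — i.e. $Q$ is affine. I would probably present this second route as the main argument, since it directly invokes the just-proved Theorem \ref{classification_simple_quandles}, and mention the center-counting argument only in passing.

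There is really no substantial obstacle here: the only thing to be slightly careful about is the connectedness hypothesis, which is what guarantees simplicity (a disconnected quandle of prime order, such as a projection quandle, need not be affine in the latin sense, though the trivial projection quandle on $p$ points is still affine over the trivial automorphism — but in any case connectedness is what makes $\dis(Q)$ transitive and lets both arguments run). The ``hard part,'' such as it is, was already done in proving Theorem \ref{classification_simple_quandles} and in establishing that $p$-quandles have $p$-group displacement groups; the present statement is a corollary that merely combines these with the primality of $|Q|$.
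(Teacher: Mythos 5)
Your main argument --- showing $Q$ is simple because $|Q|=|Q/\alpha|\,|[a]_\alpha|$ forces every congruence to be trivial, then invoking Theorem \ref{classification_simple_quandles} --- is exactly the paper's proof and is correct. The center-counting alternative you sketch in passing (forcing $|Z(\dis(Q))|=|Q|$ and applying Proposition \ref{order_of_Z=Q_iff_Q_affine}) is also valid, but it is not needed.
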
 
\begin{proof}
The quandle $Q$ is simple of size $p$ since the size of any factor of $Q$ divides $p$. By Theorem \ref{classification_simple_quandles} it is affine. \end{proof}

If $Q$ is a connected quandle, then $Z(\dis(Q))$ is a semiregular group and then $|Z(\dis(Q))|\leq |Q|$. For quandles of prime power size we have the following.
\begin{Pro}
\label{order_of_Z_cannot_be_p^n-1}Let $Q$ be a connected quandle of order $p^n$. Then the order of $\Z$ is not $p^{n-1}$.
\end{Pro}
\begin{proof}
The size of factors of a connected quandle divides the size of the quandle. The size of the orbits of $Z(\dis(Q))$ equals the size of $Z(\dis(Q))$ and they are contained in the blocks of $\zeta_Q$. Suppose that $Z(\dis(Q))$ has order $p^{n-1}$, so $\dis(Q)$ is not abelian by Proposition \ref{order_of_Z=Q_iff_Q_affine} and then $\zeta_Q\neq 1_Q$. Therefore $|Z(\dis(Q))|=p^{n-1}\leq|[a]_{\zeta_Q}|<p^n$. So $[a]_{\zeta_Q}$ coincides with the orbit of $a$ and $Q/\zeta_Q$ has size $p$ and so it has cyclic \disp\, group \ref{quandles_of_prime_order}. Moreover $Z(\dis(Q))\leq \dis^{\zeta_Q}$ and $Z(\dis(Q))\bigcap \dis^{\zeta_Q}_a=1$, then by cardinality $\dis^{\zeta_Q}= Z(\dis(Q))\times \dis^{\zeta_Q}_a$. By Corollary \ref{stabilizer in derived}, $\dis^{\zeta_Q}_a\leq \dis(Q)_a\leq \gamma_1(\dis(Q))\leq \Phi(\dis(Q))$ because $\dis(Q)$ is a $p$-group, hence $\dis(Q)$ is cyclic over its center and then abelian, contradiction.
\end{proof}

Using Proposition \ref{order_of_Z_cannot_be_p^n-1} we can obtain the classification of quandles of prime squared order and show that, similarly to what happens for groups, they are abelian and then affine.
\begin{The}[{\cite[Proposition 3.10]{Grana_p2}}]
\label{quandles_of_prime_squared_order}Connected quandles of prime squared order are affine.
\end{The}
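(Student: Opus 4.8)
The plan is to prove that a connected quandle $Q$ of order $p^2$ is affine by showing that $\dis(Q)$ is abelian and then invoking Proposition \ref{order_of_Z=Q_iff_Q_affine}. Write $G=\dis(Q)$, which is a $p$-group acting transitively on $Q$, so $|G|$ is divisible by $p^2$. Since $G$ is a nontrivial $p$-group, $Z(G)\neq 1$, and $Z(G)$ is semiregular (being a central, hence normal in $\lmlt Q$, subgroup), so $|Z(G)|\in\{p,p^2\}$ by Lagrange-type divisibility. If $|Z(G)|=p^2$ then $Z(G)$ is regular on $Q$, so $G=Z(G)$ is abelian and $Q$ is affine by Proposition \ref{order_of_Z=Q_iff_Q_affine}. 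The case $|Z(G)|=p=p^{n-1}$ with $n=2$ is excluded outright by Proposition \ref{order_of_Z_cannot_be_p^n-1}. So the only surviving possibility forces $G$ abelian, and we are done.

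The key steps in order: (1) record that $G$ is a $p$-group by \cite[Corollary 5.2]{GiuThe} and that it acts transitively, so $p^2\mid|G|$; (2) note $Z(G)\neq 1$ and $Z(G)$ is semiregular, whence $|Z(G)|$ is $p$ or $p^2$; (3) dispatch $|Z(G)|=p^2$ via Proposition \ref{order_of_Z=Q_iff_Q_affine}; (4) dispatch $|Z(G)|=p$ by quoting Proposition \ref{order_of_Z_cannot_be_p^n-1} with $n=2$; (5) conclude $Q$ is affine. Everything here is an assembly of results already established in the excerpt, so there is essentially no new computation.

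The main obstacle — such as it is — was already absorbed into Proposition \ref{order_of_Z_cannot_be_p^n-1}: ruling out $|Z(G)|=p^{n-1}$ is exactly the content that makes this two-line corollary work, and that argument (using the Derived Subgroup Lemma to push $\dis^{\zeta_Q}_a$ into $\Phi(G)$ and forcing $G$ cyclic-over-center, hence abelian, a contradiction) is the genuine input. At the level of this theorem the only thing to be careful about is the bookkeeping of which orders of $Z(G)$ are a priori possible: semiregularity gives $|Z(G)|\mid p^2$, connectedness gives $|Z(G)|>1$, so the trichotomy $p$ versus $p^2$ is complete and both branches are already handled. One could alternatively phrase it without splitting cases, by observing that Proposition \ref{order_of_Z_cannot_be_p^n-1} plus semiregularity leaves only $|Z(G)|=p^2$, but the case split is cleaner to read.
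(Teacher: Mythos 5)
Your proof is correct and follows essentially the same route as the paper: the paper's own argument also invokes Proposition \ref{order_of_Z_cannot_be_p^n-1} to rule out $|Z(\dis(Q))|=p$ and then concludes via Proposition \ref{order_of_Z=Q_iff_Q_affine}, with the real content residing in those two earlier results exactly as you observe.
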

\begin{proof}
Let $Q$ be a connected quandle of size $p^2$. According to Proposition \ref{order_of_Z_cannot_be_p^n-1}, $|Z(\dis(Q))|=p^2$, since it is not trivial and therefore $Q$ is affine.%
\end{proof}

\section{Connected Quandles of Prime Cube Order}\label{cube order}
We now turn to the classification of connected quandles of prime cube order.  By Proposition \ref{order_of_Z_cannot_be_p^n-1}, either $|\Z | = p^3$ or $|\Z|=p$. The first case is clear by Proposition \ref{order_of_Z=Q_iff_Q_affine}, and a classification can be found in \cite{Hou}. In this section we will discuss the second case and we can assume that such quandles are not simple by Theorem \ref{classification_simple_quandles}.  We are going to use that the non-simple ones have a central congruence with an affine factor of size $p^2$.
\begin{Pro}\label{2nilp}
Let $Q$ be a connected quandle of size $p^3$ with $|Z(\dis(Q))|=p$. Then $Q$ is nilpotent of length $2$ and $\gamma_1(Q)=\mathcal{O}_{Z(\dis(Q))}$ is the smallest congruence of $Q$. If $Q$ is faithful, then $\gamma_1(Q)=\zeta_Q$.
\end{Pro}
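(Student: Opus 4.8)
The plan is to leverage the fact that $\dis(Q)$ is a $p$-group of order dividing $p^3$ with center of order $p$, together with the structural results of Section~\ref{central} and the Derived Subgroup Lemma. First I would establish nilpotency of length $2$ as follows. Since $Q$ is connected of prime power size, $\dis(Q)$ is a $p$-group and $Q$ is nilpotent by \cite[Corollary~5.2]{GiuThe} and \cite[Corollary~6.6]{CP}. Set $G=\dis(Q)$. Because $|Z(G)|=p$, $Q$ is not affine by Proposition~\ref{order_of_Z=Q_iff_Q_affine}, and by Proposition~\ref{order_of_Z_cannot_be_p^n-1} we also know $|Z(G)|\neq p^2$, so indeed $|Z(G)|=p$. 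The orbit congruence $\alpha=\mathcal{O}_{Z(G)}$ has blocks of size $|Z(G)|=p$ (as $Z(G)$ is semiregular), so $Q/\alpha$ has size $p^2$ and is therefore affine by Theorem~\ref{quandles_of_prime_squared_order}; in particular $\dis(Q/\alpha)$ is abelian, which forces $\gamma_1(G)\leq\dis^{\alpha}$. Thus $\gamma_2(G)=[\gamma_1(G),G]\leq[\dis^{\alpha},G]$; I would then argue that $\gamma_1(G)$ is contained in $Z(G)$: since $G$ is a $p$-group of order $\le p^3$ with $|Z(G)|=p$, we have $|G/Z(G)|\le p^2$, so $G/Z(G)$ is abelian, whence $\gamma_1(G)\le Z(G)$ and $\gamma_2(G)=1$. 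By the nilpotency correspondence of \cite[Section~6]{CP} (nilpotency of $Q$ is reflected in nilpotency of $\dis(Q)$ for connected $Q$), $\gamma_2(Q)=0_Q$, so $Q$ is nilpotent of length exactly $2$ (length $1$ would mean abelian, contradicting $|Z(G)|=p<p^3$).

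Next I would identify $\gamma_1(Q)$ with $\mathcal{O}_{Z(G)}$. By Proposition~\ref{gamma_1}, $\gamma_1(Q)=\mathcal{O}_{\gamma_1(G)}$. Since we have just shown $\gamma_1(G)\le Z(G)$, and conversely $Z(G)\le\gamma_1(G)$ would need checking — but actually we do not need equality of the subgroups, only equality of the orbit congruences. Here I would use that $Z(G)$ is semiregular with orbits of size $p$, so $\mathcal{O}_{Z(G)}$ has blocks of size $p$; and $\gamma_1(G)\le Z(G)$ gives $\mathcal{O}_{\gamma_1(G)}\le\mathcal{O}_{Z(G)}$. For the reverse, note $\gamma_1(Q)=\mathcal{O}_{\gamma_1(G)}$ cannot be $0_Q$ (else $Q$ would be abelian of size $p^3$, forcing $|Z(G)|=p^3$), so $\gamma_1(Q)$ is a nontrivial congruence below $\mathcal{O}_{Z(G)}$; since $\mathcal{O}_{Z(G)}$ has blocks of prime size $p$, any nontrivial congruence refining it must equal it. Hence $\gamma_1(Q)=\mathcal{O}_{Z(G)}$, and the same blocks-of-prime-size argument shows this is an atom in $Con(Q)$, i.e.\ the smallest nontrivial congruence. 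I would phrase the final clause of the statement as: $\gamma_1(Q)=\mathcal{O}_{Z(\dis(Q))}$ is the smallest nontrivial congruence; the blocks have size $p$.

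Finally, for the faithful case I would show $\gamma_1(Q)=\zeta_Q$. Since $\gamma_2(Q)=0_Q$, the congruence $\gamma_1(Q)$ is central: $[\gamma_1(Q),1_Q]=\gamma_2(Q)=0_Q$, so $\gamma_1(Q)\le\zeta_Q$. For the reverse inclusion I would use that $Q$ is faithful and not affine, so $\zeta_Q\neq 1_Q$ (if $\zeta_Q=1_Q$ then $Q$ is abelian hence affine of size $p^3$, contradicting $|Z(G)|=p$); moreover by \eqref{zeta_Q_Con_zeta_Q}, for $Q$ faithful $\zeta_Q=\c{Z(\dis(Q))}$, and since $Z(G)$ is semiregular, Proposition~\ref{semiregul groups}(ii) gives $\c{Z(G)}=\mathcal{O}_{Z(G)}$. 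Combining, $\zeta_Q=\c{Z(G)}=\mathcal{O}_{Z(G)}=\gamma_1(Q)$.

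I expect the main obstacle to be the bookkeeping around \emph{which} inclusions between the subgroups $\gamma_1(G)$, $Z(G)$, $\dis^{\alpha}$ actually hold and in which direction, versus which inclusions between the associated orbit congruences are needed; the cleanest route is to observe $|G/Z(G)|\le p^2$ (hence $G/Z(G)$ abelian, hence $\gamma_1(G)\le Z(G)$) and then to push everything through the blocks-of-prime-size observation rather than trying to pin down the subgroups exactly. One should also double-check the edge case where $|\dis(Q)|=p$ or $p^2$: but $|Z(\dis(Q))|=p$ with $\dis(Q)$ transitive of degree $p^3$ forces $|\dis(Q)|\ge p^3$, and in fact the argument above only uses $|\dis(Q)|\le p^3$ in one place (to get $G/Z(G)$ abelian), which I should verify holds — if $|\dis(Q)|$ could exceed $p^3$ this step needs the sharper input that a connected quandle of size $p^3$ which is not simple and not affine has $\dis(Q)$ of order $p^3$ or $p^4$, so I would either cite or prove that bound first, or argue directly from $|\dis(Q)| = p^3\cdot|\dis(Q)_a|$ together with $\dis(Q)_a\le\gamma_1(\dis(Q))\le\Phi(\dis(Q))$.
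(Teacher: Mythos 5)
The congruence-level part of your argument is essentially the paper's: $\mathcal{O}_{Z(\dis(Q))}$ has blocks of prime size $p$, the quotient by it has size $p^2$ and is therefore affine and abelian, $\gamma_1(Q)\neq 0_Q$ because $Q$ is not abelian, and the faithful case via $\zeta_Q=\c{Z(\dis(Q))}=\mathcal{O}_{Z(\dis(Q))}$ (Proposition~\ref{semiregul groups}(ii)) is exactly what the paper does. The genuine gap is your route to nilpotency of length $2$. You derive $\gamma_1(G)\le Z(G)$ and $\gamma_2(G)=1$ from ``$|G|\le p^3$, so $G/Z(G)$ is abelian.'' That premise is false: $\dis(Q)$ is transitive of degree $p^3$, which bounds it below, not above, and the paper itself exhibits the failure. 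In Proposition~\ref{Lem nec cond}(ii) and Table~\ref{Tab1} there are latin quandles of size $p^3$ with $|Z(\dis(Q))|=p$, $|\dis(Q)|=p^4$, $\gamma_1(\dis(Q))\cong\mathbb{Z}_p^2\not\le Z(\dis(Q))$ and $\gamma_2(\dis(Q))=Z(\dis(Q))\neq 1$ (the displacement group has nilpotency class $3$). You flag this worry yourself, but neither fallback repairs it: knowing $|\dis(Q)|\in\{p^3,p^4\}$ does not make $G/Z(G)$ abelian when $|G|=p^4$, and $\dis(Q)_a\le\Phi(\dis(Q))$ does not force $\dis(Q)_a=1$. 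A secondary weakness: even granting $\gamma_2(G)=1$, the inference ``$\gamma_2(G)=1\Rightarrow\gamma_2(Q)=0_Q$'' is not among the available results; Proposition~\ref{gamma_1} only identifies $\gamma_1(Q)$, not the higher terms.

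Two further points. First, you conflate ``atom of $Con(Q)$'' with ``smallest nontrivial congruence'': blocks of prime size show $\mathcal{O}_{Z(\dis(Q))}$ is minimal among nontrivial congruences, but to get that it lies \emph{below} every nontrivial congruence you need that every proper quotient of $Q$ is abelian (every nontrivial $\alpha$ gives $|Q/\alpha|\le p^2$, hence affine by Theorems~\ref{quandles_of_prime_order} and~\ref{quandles_of_prime_squared_order}), whence $\gamma_1(Q)\le\alpha$ for all nontrivial $\alpha$. Second, this same observation is the correct fix for nilpotency, entirely at the congruence level as in the paper: once $\gamma_1(Q)$ is the minimum nontrivial congruence, $\gamma_2(Q)=[\gamma_1(Q),1_Q]\le\gamma_1(Q)$ is either $0_Q$ or $\gamma_1(Q)$, and the latter is excluded because $Q$ is nilpotent (prime power size), so $\gamma_2(Q)=0_Q$ and $\gamma_1(Q)\le\zeta_Q$. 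No control of $\gamma_2(\dis(Q))$ is needed, and indeed none is available.
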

\begin{proof}
Let $Q$ be a connected quandle of size $p^3$. The blocks of $\beta= \mathcal{O}_{Z(\dis(Q))}$ have size $p$ and therefore $\beta$ is a minimal congruence. On the other hand, every factor of $Q$ has size at most $p^2$ for every $\alpha \in Con(Q)$ and therefore it is abelian. Hence $\gamma_1(Q)$ is the smallest congruence of $Q$ and so $\beta=\gamma_1(Q)\leq \zeta_Q$, i.e. $Q$ is nilpotent of length $2$. If $Q$ is faithful $\zeta_Q=\c{Z(\dis(Q)}$ (see \ref{zeta_Q_Con_zeta_Q}). So we can apply Corollary \ref{strucure_of_K_N} and the blocks of $\zeta_Q$ are connected, so $Z(\dis(Q))=\dis_{\zeta_Q}$ is transitive over each block, i.e. $\zeta_Q=\mathcal{O}_{Z(\dis(Q))}$.
\end{proof}

A immediate consequence of Proposition \ref{2nilp} and Proposition \ref{faithful implies latin} is the following: 
\begin{Cor}
Let $Q$ be a connected quandle of size $p^3$. If $Q$ is faithful then $Q$ is latin.\qed
\end{Cor}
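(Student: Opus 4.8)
The plan is to combine the two propositions just proved with the central extension machinery. The statement to establish is that a faithful connected quandle $Q$ of size $p^3$ is latin. By Proposition \ref{order_of_Z_cannot_be_p^n-1} the order of $Z(\dis(Q))$ is either $p^3$ or $p$. In the first case Proposition \ref{order_of_Z=Q_iff_Q_affine} gives that $Q$ is affine, and affine connected quandles are latin by Lemma \ref{facts}(ii), so there is nothing to prove. Thus the only case requiring work is $|Z(\dis(Q))|=p$.

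In that case I would invoke Proposition \ref{2nilp}: $Q$ is nilpotent of length $2$, and since $Q$ is faithful, $\gamma_1(Q)=\zeta_Q=\mathcal{O}_{Z(\dis(Q))}$ is the smallest (nontrivial) congruence, with blocks of size $p$. Now apply Lemma \ref{faithful implies latin} to the central congruence $\alpha=\zeta_Q$: $Q$ is a central extension of $Q/\zeta_Q$, and it suffices to check that $Q/\zeta_Q$ is latin. But $Q/\zeta_Q$ is a connected quandle of size $p^3/p = p^2$, hence affine by Theorem \ref{quandles_of_prime_squared_order}, and affine connected quandles are latin by Lemma \ref{facts}(ii). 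Therefore the second half of Lemma \ref{faithful implies latin} yields that $Q$ itself is latin.

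There is really no hard obstacle here; the content has been front-loaded into Proposition \ref{2nilp} (nilpotency of length $2$ and the identification of $\zeta_Q$) and Lemma \ref{faithful implies latin} (the ``latin factor implies latin'' transfer for central extensions of faithful connected quandles). The only point that deserves a sentence of care is the degenerate possibility $\zeta_Q = 1_Q$, i.e. that $Q/\zeta_Q$ could be trivial — but this is exactly excluded in the case $|Z(\dis(Q))|=p$, since then the blocks of $\zeta_Q$ have size $p < p^3$, so $Q/\zeta_Q$ genuinely has size $p^2$ and Theorem \ref{quandles_of_prime_squared_order} applies. So the proof is a short bookkeeping argument splitting on $|Z(\dis(Q))|$ and feeding the nontrivial branch through Proposition \ref{2nilp}, Theorem \ref{quandles_of_prime_squared_order}, and Lemma \ref{faithful implies latin}.
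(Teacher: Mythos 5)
Your proof is correct and follows essentially the same route as the paper, which derives the corollary as an immediate consequence of Proposition \ref{2nilp} together with Lemma \ref{faithful implies latin} (the case $|Z(\dis(Q))|=p^3$ being the affine, hence latin, case). The only cosmetic difference is that you certify $Q/\zeta_Q$ is latin via Theorem \ref{quandles_of_prime_squared_order}, whereas the paper's Proposition \ref{trans of 2step nil} gets the same conclusion from abelianness of $\dis(Q/\zeta_Q)$; both are valid.
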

In the light of the last Corollary the two cases to be discussed are non-faithful and latin.

\subsection*{Non-faithful case}
A characterization of non-faithful connected quandles of size $p^3$ is given at the end of this section in Theorem \ref{non-faithful p to the cube}. First we show that such quandles are principal over the group $\mathbb{Z}_p^2\rtimes \mathbb{Z}_p$.
\begin{Pro}\label{cov of principal}
Let $Q$ be a connected quandle and let $Q/\lambda_Q$ be a principal quandle. Then $Q$ is a principal quandle.
\end{Pro}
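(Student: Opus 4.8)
The plan is to exploit the minimal coset representation of $Q$ over its displacement group and to descend this representation along the natural quotient map $Q\to Q/\lambda_Q$. Recall that $\lambda_Q$ is the kernel of the epimorphism $L_Q\colon Q\to L(Q)$, so that $Q/\lambda_Q\cong L(Q)\cong Q/\lambda_Q$ carries the quandle structure of the conjugation quandle on $L(Q)$. Since $Q/\lambda_Q$ is connected (it is a quotient of a connected quandle) and principal, Lemma \ref{facts}(ii) gives that $\dis(Q/\lambda_Q)_{[a]}=1$ for every $[a]\in Q/\lambda_Q$; equivalently, by \eqref{stabiliser in block stabiliser} and \eqref{i sottogruppi}, the point stabilizer $\dis(Q/\lambda_Q)_{[a]}$ is trivial and $\dis(Q/\lambda_Q)$ acts regularly on $Q/\lambda_Q$.

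The first step is to transfer this regularity upstairs. I would show that $\dis(Q)_a$ is contained in the kernel $\dis^{\lambda_Q}$ of the projection $\pi_{\lambda_Q}\colon\dis(Q)\to\dis(Q/\lambda_Q)$. Indeed, if $h\in\dis(Q)_a$ then $\pi_{\lambda_Q}(h)\in\dis(Q/\lambda_Q)_{[a]}=1$, so $h\in\dis^{\lambda_Q}$; hence $\dis(Q)_a\le\dis^{\lambda_Q}$. The key point is then to prove that $\dis(Q)_a=\dis^{\lambda_Q}\cap\dis(Q)_a$ forces $\dis(Q)_a=1$, which I expect to be the heart of the argument. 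The natural route is to use the description \eqref{kernel}: an element $h\in\dis^{\lambda_Q}$ fixes every block of $\lambda_Q$, i.e. $h([b])=[b]$ for all $b$; but the blocks of $\lambda_Q$ are exactly the fibres on which all left multiplications agree, so $h$ commutes with $L_b$ for every $b\in Q$ (since $L_{h(b)}=L_b$ as $h(b)\,\lambda_Q\,b$, and $hL_bh^{-1}=L_{h(b)}=L_b$). Therefore $\dis^{\lambda_Q}$ is contained in the centralizer of $\lmlt Q$ in itself, in particular it is central in $\dis(Q)$. Being a central subgroup of the transitive group $\dis(Q)$, it is semiregular; since $\dis(Q)_a\le\dis^{\lambda_Q}$ and stabilizers of a transitive action are conjugate, a central (hence normal, semiregular) subgroup meets every point stabilizer trivially, so $\dis(Q)_a\le\dis^{\lambda_Q}$ and $\dis^{\lambda_Q}$ semiregular together give $\dis(Q)_a=1$.

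Once $\dis(Q)_a=1$ for every $a\in Q$, Lemma \ref{facts}(ii) immediately yields that $Q$ is principal, via the minimal coset representation $Q\cong\Q(\dis(Q),\dis(Q)_a,\widehat{L_a})=\Q(\dis(Q),1,\widehat{L_a})=\Q(\dis(Q),\widehat{L_a})$ of \eqref{min rep}. I would organize the write-up as: (1) quote that $Q/\lambda_Q$ connected and principal implies $\dis(Q/\lambda_Q)$ regular; (2) show $\dis(Q)_a\le\dis^{\lambda_Q}$ using $\pi_{\lambda_Q}$; (3) show $\dis^{\lambda_Q}$ is central in $\dis(Q)$ from \eqref{kernel} and the fact that $L_{h(b)}=L_b$ for $h\in\dis^{\lambda_Q}$; (4) conclude $\dis(Q)_a=1$ by semiregularity of central subgroups of transitive groups; (5) invoke Lemma \ref{facts}(ii).

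The main obstacle I anticipate is step (3): pinning down precisely why fixing every $\lambda_Q$-block makes $h$ centralize all of $\lmlt Q$. The subtlety is that $h$ fixing the block $[b]_{\lambda_Q}$ setwise is not literally the same as $h$ fixing it pointwise, so one must instead argue through left multiplications directly — for any $b$, $h(b)$ lies in the same $\lambda_Q$-class as $b$, hence $L_{h(b)}=L_b$, and then $hL_bh^{-1}=L_{h(b)}=L_b$ shows $h$ centralizes $L(Q)$ and therefore $\lmlt Q=\langle L(Q)\rangle$. If that identification is granted, the rest is a short, standard group-action argument. An alternative, slightly slicker finish for step (4) is to note that $\dis^{\lambda_Q}$ central and $\dis(Q)$ transitive force $\dis^{\lambda_Q}$ to act regularly on each of its orbits, and the orbits of $\dis^{\lambda_Q}$ are contained in the $\lambda_Q$-classes by \eqref{orbits of kernel}; combined with $\dis(Q)_a\le\dis^{\lambda_Q}$ this gives $\dis(Q)_a=1$ by the usual index count, exactly as in the proof of Corollary \ref{on 2-step nilpotency}.
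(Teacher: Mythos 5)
Your proof is correct and follows essentially the same route as the paper's: both arguments rest on the centrality of $\dis^{\lambda_Q}$ in $\dis(Q)$, the containment $\dis(Q)_a\le\dis^{\lambda_Q}$ coming from principality of $Q/\lambda_Q$, and a (semi)regularity argument to conclude $\dis(Q)_a=1$. The only difference is cosmetic: where the paper cites Eisermann's Proposition~2.49 for the centrality of $\dis^{\lambda_Q}$, you supply the short direct argument $hL_bh^{-1}=L_{h(b)}=L_b$, and you finish via semiregularity of central subgroups of transitive groups rather than via transitivity of the block stabilizer on the blocks of $\lambda_Q$ — both are fine.
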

\begin{proof}
If $Q/\lambda_Q$ is principal then $\dis^{\lambda_Q}=\BlocS{Q}{a}$. By virtue of Proposition 2.49 of \cite{Eisermann}, $\dis^{\lambda_Q}$ is central. Since $Q$ is connected, then $\dis^{\lambda_Q}$ is transitive on the blocks of $\lambda_Q$ (Proposition \ref{connected ext}) and it contains $\dis(Q)_a$ according to \eqref{stabiliser in block stabiliser}. We can infer that $\dis^{\lambda_Q}$ is abelian and transitive, therefore regular, i.e. $ \dis(Q)_a=1$ and $Q$ is principal.
\end{proof}
Recall that there exist two isomorphism classes of non abelian groups of size $p^3$, namely $\mathbb{Z}_{p^2}\rtimes \mathbb{Z}_p$ and $\mathbb{Z}_{p}^2\rtimes \mathbb{Z}_p$. 
\begin{Lem}\label{on principal case}
Let $Q$ be a connected principal quandle of size $p^3$ with $|Z(\dis(Q))|=p$. Then $\dis(Q)\cong \mathbb{Z}_p^2\rtimes \mathbb{Z}_p$.
\end{Lem}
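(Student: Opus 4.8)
The plan is to rule out the possibility $\dis(Q)\cong\mathbb{Z}_{p^2}\rtimes\mathbb{Z}_p$ by a direct structural argument on groups of order $p^3$. Since $Q$ is principal of size $p^3$, the minimal coset representation \eqref{min rep} gives $Q\cong\mathcal{Q}(\dis(Q),f)$ with $f=\widehat{\Lm{a}}$, $\dis(Q)_a=1$, and $\dis(Q)$ acting regularly on $Q$; in particular $|\dis(Q)|=p^3$ and, by Lemma \ref{facts}(ii), $Fix(f)=1$. Moreover by connectedness and Proposition \ref{Q conn iff Q/[1,1,] conn}(i) (or directly Lemma \ref{commutator_with_trans_is_trans} via $\dis(Q)_a=1$) we have $[\dis(Q),f]=\dis(Q)$. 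Finally, since $\dis(Q)$ is a non-abelian $p$-group of order $p^3$, its center has order exactly $p$, so the hypothesis $|Z(\dis(Q))|=p$ is automatic and carries no extra information; what must be shown is simply that $G:=\mathbb{Z}_{p^2}\rtimes\mathbb{Z}_p$ admits no automorphism $f$ with $Fix(f)=1$.

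First I would record the structure of $G=\mathbb{Z}_{p^2}\rtimes\mathbb{Z}_p$: it has a characteristic cyclic subgroup $C\cong\mathbb{Z}_{p^2}$ (for $p$ odd this is the unique subgroup isomorphic to $\mathbb{Z}_{p^2}$, equivalently the set of elements of order dividing $p^2$ that are not all of order $\leq p$; alternatively $C=\gamma_1(G)$-related, more precisely $C$ contains the Frattini subgroup $\Phi(G)=Z(G)=\gamma_1(G)$ of order $p$). Any automorphism $f$ of $G$ fixes $C$ setwise, hence induces an automorphism $\bar f$ of the cyclic group $C\cong\mathbb{Z}_{p^2}$. Now $\aut{\mathbb{Z}_{p^2}}$ is cyclic of order $p(p-1)$, and its unique subgroup of order $p$ consists of the maps $x\mapsto(1+kp)x$; all of these fix the subgroup $pC=\Phi(G)$ pointwise. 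The key observation is: if $f$ has order a power of $p$ as we may reduce to (or more simply, for any $f$), consider the restriction of $\bar f$ to $\Phi(G)\cong\mathbb{Z}_p$. I claim $\bar f$ acts trivially on $\Phi(G)$: indeed $\Phi(G)=Z(G)$ and $f$ restricted to the center is an automorphism of $\mathbb{Z}_p$, which a priori could be nontrivial — so this needs the finer point that $\Phi(G)=[G,G]$ and a commutator $[x,y]^{f}=[x^f,y^f]$; but the abelianization $G/[G,G]\cong\mathbb{Z}_p\times\mathbb{Z}_p$... Actually the clean route is: $[G,f]=G$ forces, via Lemma \ref{derived subgroup lemma}, $Fix(f)\leq\gamma_1(G)=\Phi(G)$, so it suffices to show $f$ fixes a nonzero element of the one-dimensional space $\Phi(G)$, i.e. that $\bar f|_{\Phi(G)}=\mathrm{id}$. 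This last fact I would get from the passage to $G/\Phi(G)$: by Proposition \ref{Q conn iff Q/[1,1,] conn} the induced map $f_{\Phi(G)}$ on $G/\Phi(G)\cong\mathbb{Z}_p^2$ has no eigenvalue $1$; but $G/\Phi(G)$ being $2$-dimensional with no fixed points means $\det(f_{\Phi(G)})\ne 0$ as a map $1-f_{\Phi(G)}$, and comparing determinants of $f$ on the filtration $\Phi(G)\leq C\leq G$ — here one uses that on the cyclic quotient $C/\Phi(G)\cong\mathbb{Z}_p$ the scalar by which $\bar f$ acts equals the scalar on $\Phi(G)=pC$ (because multiplication by $u$ on $\mathbb{Z}_{p^2}$ acts by $u\bmod p$ on both $C/pC$ and $pC$). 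So if $\bar f$ acted without fixed points on $\Phi(G)$, it would act by the same nontrivial scalar on $C/\Phi(G)$, hence the remaining eigenvalue of $f_{\Phi(G)}$ on the complement $G/C\cong\mathbb{Z}_p$ would have to be $1$ for $[G,f]=G$ to hold — contradiction with $Fix(f)=1$. Running this bookkeeping carefully pins down that $\bar f$ must act trivially on $\Phi(G)$, whence $Fix(f)\supseteq\Phi(G)\ne 1$, contradicting $Fix(f)=1$. Therefore $\dis(Q)\not\cong\mathbb{Z}_{p^2}\rtimes\mathbb{Z}_p$, and since it is a non-abelian group of order $p^3$ it must be $\mathbb{Z}_p^2\rtimes\mathbb{Z}_p$.

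The main obstacle I anticipate is the eigenvalue bookkeeping across the three-step filtration $\Phi(G)\leq C\leq G$: one must correctly identify which characteristic subgroups an arbitrary automorphism of $\mathbb{Z}_{p^2}\rtimes\mathbb{Z}_p$ preserves (the subtlety being that the complement $\mathbb{Z}_p$ is \emph{not} characteristic, only $C$ and $\Phi(G)=Z(G)=[G,G]$ are), and then match the action of $\bar f$ on $pC$ with its action on $C/pC$. Once that linear-algebra-over-$\mathbb{F}_p$ dictionary is set up, the conclusion is forced by $Fix(f)=1$ together with $\gamma_1(G)=\Phi(G)$ and Lemma \ref{derived subgroup lemma}. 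An alternative, possibly shorter, finish is purely computational: write down $\aut{\mathbb{Z}_{p^2}\rtimes\mathbb{Z}_p}$ explicitly (it is well known, of order $p^3(p-1)$ for $p$ odd), observe that every automorphism fixes $C$ and acts on $\Phi(G)$ through the same scalar as on $C$, and check that the $\bar f$-fixed subgroup of $\Phi(G)$ is nontrivial precisely when... — in any case every element of $\aut{G}$ has a nonzero fixed point, so no such $f$ exists. I would present whichever of the two is cleaner after the computation, but the structural filtration argument is the one I would draft first.
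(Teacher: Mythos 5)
There are two genuine gaps here, and they are independent of each other.

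First, your reduction to ``$Fix(f)=1$'' is not valid. Lemma \ref{facts}(ii) says that a principal quandle $\Q(G,f)$ is \emph{latin} if and only if $Fix(f)=1$; it does not say that a \emph{connected} principal quandle has $Fix(f)=1$, and the two are not equivalent. Indeed this very lemma is invoked in the non-faithful case (Theorem \ref{non-faithful p to the cube}), where $Q$ is connected, principal, and $Fix(f)=Z(\dis(Q))\neq 1$. So even the (true) statement that every automorphism of $\mathbb{Z}_{p^2}\rtimes\mathbb{Z}_p$ has a nontrivial fixed point would not finish the proof: the hypothesis you must contradict is connectedness, not fixed-point-freeness. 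Second, the structural argument rests on the claim that $G=\mathbb{Z}_{p^2}\rtimes\mathbb{Z}_p$ has a characteristic cyclic subgroup $C\cong\mathbb{Z}_{p^2}$. For odd $p$ this is false: writing $G=\langle a,b\mid a^{p^2}=b^p=1,\ b^{-1}ab=a^{1+p}\rangle$, one has $(a^ib^j)^p=a^{ip}$ (class $2$, odd $p$), so there are $p$ distinct cyclic subgroups of order $p^2$, namely $\langle ab^t\rangle$ for $t=0,\dots,p-1$, and the assignment $a\mapsto ab$, $b\mapsto b$ extends to an automorphism moving $\langle a\rangle$. The filtration $\Phi(G)\leq C\leq G$ is therefore not $f$-stable and the eigenvalue bookkeeping never gets started.

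The paper's proof goes differently: it cites Winter's result that every automorphism of this group induces on $G/Z(G)\cong\mathbb{Z}_p^2$ a map with a nontrivial fixed vector, so the affine factor $\aff(G/Z(G),f_{Z(G)})$ is not latin, hence not connected, hence $Q$ is not connected --- contradiction. If you want to avoid the citation, the missing computation is short and lives entirely in $G/Z(G)$: since $f(b)$ must have order $p$ it lies in $\langle a^p,b\rangle$, and the relation $[f(a),f(b)]=f(a^p)=f(a)^p$, read through the commutator pairing $[a^ib^j,a^kb^l]=a^{p(il-jk)}$, forces $f(b)\equiv b\pmod{Z(G)}$; this is the fixed vector. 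That single identity is what your argument needs to produce, and neither the global fixed-point claim nor the filtration through a non-characteristic $C$ supplies it.
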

\begin{proof}
Assume that $\dis(Q)$ is isomorphic to $G=\mathbb{Z}_{p^2}\rtimes \mathbb{Z}_p$ and $f\in \aut{G}$. According to \cite[Corollary 1]{winter1972} the induced automorphism on the factor $G/Z(G)$ has a fixed point. So $R=\aff(G/Z(G),f_{Z(G)})$ is a non connected factor of $Q$ according to \ref{facts}(ii) $R$ is not connected and then $Q$ is not connected. 
%
\end{proof}

\begin{The}[Charaterization of non-faithful connected $p^3$-quandles\label{non-faithful p to the cube}]
Let $Q$ be a quandle of size $p^3$. The following are equivalent:
\begin{itemize}
\item[(i)] $Q$ is connected and non-faithful with $|Z(\dis(Q))|=p$. 
\item[(ii)] $ Q\cong \Q(\mathbb{Z}_p^2\rtimes \mathbb{Z}_p,f)$, $Z(\dis(Q))=Fix(f)$ and $Fix(f_{\gamma_1(G)})=1$ .
\end{itemize}
\end{The}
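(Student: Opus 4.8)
The plan is to prove the two implications of the equivalence separately, leaning on the structural results already established for connected $p^3$-quandles. Throughout write $G=\dis(Q)$.

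\emph{From (i) to (ii).} Assume $Q$ is connected, non-faithful, with $|Z(G)|=p$. Since $Q$ is non-faithful, $\lambda_Q\neq 0_Q$, so $Q/\lambda_Q$ has size dividing $p^2$ and hence is affine by Theorems \ref{quandles_of_prime_order} and \ref{quandles_of_prime_squared_order} (so in particular principal). By Proposition \ref{cov of principal}, $Q$ is then principal, and by the minimal coset representation $Q\cong\Q(G,\widehat{L_a})$. By Proposition \ref{2nilp}, $Q$ is nilpotent of length $2$ with $\gamma_1(Q)=\mathcal{O}_{Z(G)}$ a minimal congruence, and the factor $Q/\gamma_1(Q)$ is affine over $G/\gamma_1(G)$ of size $p^2$ (Proposition \ref{ker of [1,1]}, recalling $\dis^{\gamma_1(Q)}=\gamma_1(G)$ and that the quotient has size $p^2$). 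In particular $G$ is non-abelian (else $Q$ would be affine, hence latin, hence faithful), so $G$ is one of the two non-abelian groups of order $p^3$; Lemma \ref{on principal case} pins it down as $G\cong\mathbb{Z}_p^2\rtimes\mathbb{Z}_p$. It remains to identify $Fix(f)$ where $f=\widehat{L_a}$: since $Q$ is principal, $\dis(Q)_a=Fix(f)=1$ for a faithful quandle, but here $Q$ is \emph{not} faithful, so $Fix(f)$ may be nontrivial; by the Derived subgroup Lemma (Theorem \ref{stabilizer in derived}) and $G=[G,f]$ we have $Fix(f)\leq\gamma_1(G)$, which has order $p$, so $Fix(f)$ is either $1$ or all of $\gamma_1(G)=Z(G)$. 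If $Fix(f)=1$ then $Q$ is latin hence faithful, a contradiction; hence $Fix(f)=Z(G)=\gamma_1(G)$, giving $Z(\dis(Q))=Fix(f)$. Finally $Fix(f_{\gamma_1(G)})=1$: the factor $Q/\gamma_1(Q)=\aff(G/\gamma_1(G),f_{\gamma_1(G)})$ is connected of size $p^2$, hence latin, so $Fix(f_{\gamma_1(G)})=1$ by Lemma \ref{facts}(ii).

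\emph{From (ii) to (i).} Now suppose $Q\cong\Q(G,f)$ with $G=\mathbb{Z}_p^2\rtimes\mathbb{Z}_p$, $Fix(f)=Z(G)$ and $Fix(f_{\gamma_1(G)})=1$. Note $\gamma_1(G)=Z(G)=\Phi(G)$ for this group, so the hypothesis $Fix(f_{\gamma_1(G)})=1$ says exactly that the induced map on $G/\Phi(G)$ has no eigenvalue $1$; by Corollary \ref{nilpotent case}, $Q$ is connected. To compute $Z(\dis(Q))$: since $Q$ is principal, $\dis(Q)\cong G/Core_G(1)=G$ (or use Proposition \ref{Q conn iff Q/[1,1,] conn}(i)), so $Z(\dis(Q))=Z(G)$ has order $p$. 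Finally $Q$ is non-faithful: by the principal criterion Lemma \ref{facts}(ii), $Q$ is latin iff $Fix(f)=1$, but $Fix(f)=Z(G)\neq 1$, so $Q$ is not latin; and by the Corollary following Proposition \ref{2nilp} a \emph{faithful} connected $p^3$-quandle is latin, so $Q$ cannot be faithful. This establishes (i).

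\emph{Main obstacle.} The delicate point is the bookkeeping in the first implication around $Fix(f)$: one must be careful that "principal" does not force faithfulness, and extract the exact value $Fix(f)=Z(G)$ from the combination of the Derived subgroup Lemma (bounding $Fix(f)$ inside $\gamma_1(G)$, order $p$) with the observation that $Fix(f)=1$ would make $Q$ latin and hence faithful. A secondary technical check is that $\gamma_1(G)=Z(G)=\Phi(G)$ for $G=\mathbb{Z}_p^2\rtimes\mathbb{Z}_p$, which is what lets Corollary \ref{nilpotent case} and the "affine factor of size $p^2$" discussion line up; this is routine but needs to be stated. Everything else is a direct assembly of Proposition \ref{cov of principal}, Lemma \ref{on principal case}, Proposition \ref{2nilp}, Corollary \ref{nilpotent case}, and Lemma \ref{facts}(ii).
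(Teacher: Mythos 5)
Your proof is correct and follows the same overall skeleton as the paper (Proposition \ref{cov of principal} plus Lemma \ref{on principal case} to get $Q\cong\Q(\mathbb{Z}_p^2\rtimes\mathbb{Z}_p,f)$, and Corollary \ref{nilpotent case} for the converse). The one step where you genuinely diverge is the identification $Fix(f)=Z(G)$. The paper argues that every proper factor of $Q$ is affine, hence faithful, so $\lambda_Q$ is the \emph{smallest} nontrivial congruence; by Proposition \ref{2nilp} the smallest congruence is $\gamma_1(Q)=\mathcal{O}_{Z(\dis(Q))}$, so $\lambda_Q=\mathcal{O}_{Z(\dis(Q))}$, and since for a principal quandle the $\lambda_Q$-blocks are the cosets of $Fix(f)$ while the $\mathcal{O}_N$-blocks are the cosets of $N$, one reads off $Fix(f)=Z(G)$. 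You instead invoke $G=[G,\widehat{L_a}]$ and Lemma \ref{derived subgroup lemma} to get $Fix(f)\leq\gamma_1(G)$, observe that $\gamma_1(G)=Z(G)$ has order $p$, and rule out $Fix(f)=1$ because it would make $Q$ latin and hence faithful. Both are valid; yours is a little more hands-on (a dichotomy on a subgroup of order $p$), the paper's is more structural and also yields $\lambda_Q=\gamma_1(Q)=\zeta_Q$ as a byproduct. One small citation slip: the inclusion $Fix(f)\leq\gamma_1(G)$ is Lemma \ref{derived subgroup lemma} (applied via Lemma \ref{commutator_with_trans_is_trans}), not Theorem \ref{stabilizer in derived}, which only bounds the stabilizer $\dis(Q)_a$ (trivial here since $Q$ is principal). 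Your non-faithfulness argument in (ii)$\Rightarrow$(i) also takes a slight detour through ``faithful implies latin''; the paper gets it directly from $\lambda_Q$-blocks being the nontrivial cosets of $Fix(f)$.
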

\begin{proof}
Note that for a principal quandle $\Q(G,f)$ the blocks of $\lambda_Q$ correspond to cosets with respect to $ Fix(f)$ and the orbits with respect to the action of a group $N\leq G$ are the cosets with respect to $N$.

(i) $\Rightarrow$ (ii) The factor $Q/\lambda_Q$ is affine, thus by Proposition \ref{cov of principal} $Q$ is principal and by Lemma \ref{on principal case} $Q\cong \Q( \mathbb{Z}_p^2\rtimes \mathbb{Z}_p,f)$. Every factor of $Q$ is connected and affine, then it is faithful and so $\lambda_Q$ is a the smallest congruence of $Q$ (indeed if $L_a=L_b$ then $L_{[a]_\alpha}=L_{[b]_\alpha}$ for every congruence $\alpha$ and therefore $[a]_\alpha=[b]_\alpha$). By Proposition \ref{2nilp}, then $\lambda_Q=\mathcal{O}_{Z(\dis(Q))}$ . Hence $Fix(f)=Z(\dis(Q))$. The factor $Q/\gamma_1(Q)=\aff(G/\gamma_1(G),f_{\gamma_1(G)})$ is connected and then latin, so $Fix(f_{\gamma_1(G)})=1$ (\ref{facts}(ii)).

(ii) $\Rightarrow$ (i) For the group $G=\mathbb{Z}_p^2\rtimes \mathbb{Z}_p$ we have that $\gamma_1(G)=\Phi(G)$ and its affine factor $\aff(G/\Phi(G),f_{\Phi(G)})$ is connected since $Fix(f_{\Phi(G)})=1$. By virtue of Corollary \ref{nilpotent case}, $Q$ is connected and principal. Since $Fix(f)\neq 1$ then $Q$ is not faithful.
\end{proof}


\subsection*{Latin Case}
%
%
In Proposition \ref{Lem nec cond} we will find some necessary and sufficient conditions to be satisfied by pairs $(G,f)$ where $G$ is a group and $f\in \aut{G}$ for which $Q=\Q(G,Fix(f),f)$ is a latin quandle of size $p^3$ with $\dis(Q)\cong G$ and $|Z(\dis(Q))|=p$. Recall that for faithful quandles $\dis(Q)_a=Fix(\widehat{\Lm{a}})$. To this aim we will make use of Proposition \ref{sims}:
\begin{Pro}[{\cite[Proposition 9.2.5]{Sims}}]
\label{sims}Let $G$ be a group and let $X\subseteq G$ such that $G/\gamma_1(G)$ is generated by the set $\setof{ x\gamma_1(G)}{x\in X}$. Then $\gamma_1(G)/\gamma_2(G)$ is generated by the set of their commutators, i.e. $\setof{[x,y]\gamma_2(G)}{x,y\in X}$. \qed 
\end{Pro}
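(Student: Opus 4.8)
The plan is to reduce everything to the abelian group $\bar{N}:=\gamma_1(G)/\gamma_2(G)$, which is a \emph{central} subgroup of $\bar{G}:=G/\gamma_2(G)$, since $\gamma_2(G)=[\gamma_1(G),G]$ by definition of the lower central series (in particular $\bar N$ is abelian, being contained in $Z(\bar G)$). Write $H$ for the subgroup of $\bar N$ generated by $\setof{[x,y]\gamma_2(G)}{x,y\in X}$; the goal is to prove $H=\bar N$.

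First I would record the elementary commutator identities valid modulo $\gamma_2(G)$: for all $a,b,c\in G$ one has $[ab,c]\equiv[a,c][b,c]$, $[a,bc]\equiv[a,b][a,c]$, and $[a^{-1},b]\equiv[a,b]^{-1}\equiv[a,b^{-1}]$. These follow from the standard expansions such as $[ab,c]=[a,c]^{b}[b,c]=[a,c]\,[[a,c],b]\,[b,c]$, together with the observation that every nested ``error'' commutator $[[a,c],b]$ has one entry $[a,c]\in\gamma_1(G)$ and therefore lies in $[\gamma_1(G),G]=\gamma_2(G)$. In the same vein, if $c\in\gamma_1(G)$ then $[c,b]\equiv[b,c]\equiv 1\pmod{\gamma_2(G)}$. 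Consequently the map $G\times G\to\bar N$, $(a,b)\mapsto[a,b]\gamma_2(G)$, is a homomorphism separately in each argument and kills $\gamma_1(G)$ in either slot, i.e. it factors through $(G/\gamma_1(G))\times(G/\gamma_1(G))$.

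Next I would use that $\gamma_1(G)$ is generated by the commutators $[g,h]$ with $g,h\in G$, so that $\bar N$ is generated by the cosets $[g,h]\gamma_2(G)$; since $\bar N$ is abelian, it then suffices to show each such coset lies in $H$. By hypothesis every $g\in G$ can be written $g=x_1^{\varepsilon_1}\cdots x_k^{\varepsilon_k}\,c$ with $x_i\in X$, $\varepsilon_i\in\{\pm1\}$ and $c\in\gamma_1(G)$, and likewise $h=y_1^{\delta_1}\cdots y_l^{\delta_l}\,d$. Expanding $[g,h]\gamma_2(G)$ by the bi-multiplicativity just established, discarding the factors involving $c$ and $d$ (which vanish modulo $\gamma_2(G)$), and using $[x^{\pm1},y^{\pm1}]\equiv[x,y]^{\pm1}$, one gets $[g,h]\gamma_2(G)=\prod_{i,j}\bigl([x_i,y_j]\gamma_2(G)\bigr)^{\varepsilon_i\delta_j}\in H$. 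Hence $H=\bar N$, which is the assertion.

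The argument is routine once the three commutator identities are in hand; the only point deserving care is confirming that each nested error commutator genuinely lands in $\gamma_2(G)=[\gamma_1(G),G]$, which is immediate from the definition of the lower central series, so I do not anticipate a real obstacle here.
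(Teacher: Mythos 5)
Your proof is correct; the paper does not prove this statement at all but simply cites it as Proposition 9.2.5 of Sims, and your argument --- establishing that $(a,b)\mapsto[a,b]\gamma_2(G)$ induces a bi-additive map on $G/\gamma_1(G)\times G/\gamma_1(G)$ with values in the central quotient $\gamma_1(G)/\gamma_2(G)$, then expanding arbitrary commutators over the generating set $X$ --- is exactly the standard proof of this classical fact. No gaps: the key verifications (that the error terms $[[a,c],b]$ land in $[\gamma_1(G),G]=\gamma_2(G)$ and that the map kills $\gamma_1(G)$ in each slot) are all stated and justified.
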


\begin{Pro}\label{Lem nec cond}
Let $Q$ be a latin quandle of order $p^3$, $G=\dis(Q)$, and $|Z(G)|=p$.  Then one of the following holds:
\begin{itemize}
\item[(i)] $|G|=p^3$, $Z(G)=\gamma_1(G)\cong\mathbb{Z}_p$ and $Fix(\widehat{L_a})=1$.
\item[(ii)]  $|G|= p^4$, $\gamma_1(G)\cong Z(G)\times Fix(\widehat{\Lm{a}})\cong \mathbb{Z}_p^2$ and in particular $G$ is  nilpotent of length $3$.
\end{itemize}
\end{Pro}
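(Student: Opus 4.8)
The starting point is that $Q$ is latin, hence faithful and connected, and of nilpotency length $2$ by Proposition \ref{2nilp}; in particular $\zeta_Q = \gamma_1(Q) = \mathcal{O}_{Z(G)}$. So I would first invoke Proposition \ref{trans of 2step nil}: since $Q$ is faithful, connected and $2$-step nilpotent, $\dis^{\zeta_Q} \cong Z(G) \times Fix(\widehat{L_a})$, and the block stabilizer $\dis(Q)_{[a]_{\zeta_Q}}$ equals $\dis^{\zeta_Q}$. The blocks of $\zeta_Q$ have size equal to $|Z(G)| = p$ (by Proposition \ref{2nilp}), and by Corollary \ref{strucure_of_K_N}(i) each block $[a]_{\zeta_Q}$ is affine latin of size $p$, so $\dis_{\zeta_Q} = \dis([a]_{\zeta_Q}) \cong \mathbb{Z}_p$; by Corollary \ref{strucure_of_K_N}(ii), $\dis^{\zeta_Q} \cong \dis_{\zeta_Q} \times \dis^{\zeta_Q}_a$, and since $Q$ is faithful $\dis(Q)_a = Fix(\widehat{L_a})$, while $\dis^{\zeta_Q}_a = \dis(Q)_a \cap \dis^{\zeta_Q} = \dis(Q)_a$ because $\dis(Q)_a \leq \dis(Q)_{[a]} = \dis^{\zeta_Q}$ by \eqref{stabiliser in block stabiliser} and \eqref{block_stab=kernel}. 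Hence $\dis^{\zeta_Q} \cong \mathbb{Z}_p \times Fix(\widehat{L_a})$.

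Next I would pin down $Fix(\widehat{L_a})$. The quotient $Q/\zeta_Q$ is affine latin of size $p^2$ (size $p^3 / p = p^2$, abelian by the $2$-step nilpotency, latin by Lemma \ref{faithful implies latin} or Proposition \ref{trans of 2step nil}), so $\dis(Q/\zeta_Q) \cong G/\dis^{\zeta_Q}$ is abelian and regular of order $p^2$; therefore $|G| = p^2 \cdot |\dis^{\zeta_Q}| = p^2 \cdot p \cdot |Fix(\widehat{L_a})| = p^3 |Fix(\widehat{L_a})|$. Since $G$ is a $p$-group with $|Z(G)| = p$, it is certainly nonabelian (the case $Fix(\widehat{L_a}) = 1$ needs separate mention), so $\gamma_1(G) \neq 1$. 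The key structural input is the Derived subgroup Lemma (Theorem \ref{stabilizer in derived}): $\dis(Q)_a = Fix(\widehat{L_a}) \leq \gamma_1(G)$, and also $Z(G) \leq \gamma_1(G)$ is \emph{not} automatic — rather I would argue via Lemma \ref{commutator_with_trans_is_trans} that $[G, \widehat{L_a}] = G$, so by Lemma \ref{derived subgroup lemma} $Fix(\widehat{L_a}) \leq \gamma_1(G)$, and separately that $\dis_{\zeta_Q} = Z(G) \leq \gamma_1(G)$: indeed $G/\gamma_1(G)$ is abelian, so $\mathcal{O}_{\gamma_1(G)} \geq \gamma_1(Q) = \mathcal{O}_{Z(G)}$ forces (since $\gamma_1(Q)$ is minimal and nonzero) $\gamma_1(Q) = \mathcal{O}_{\gamma_1(G)}$ by Proposition \ref{gamma_1}, whence $\gamma_1(G) \leq \dis^{\gamma_1(Q)} = \dis^{\zeta_Q}$ and $Z(G) = \dis_{\zeta_Q} \leq \gamma_1(G)$. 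Combining, $Z(G) \cdot Fix(\widehat{L_a}) \leq \gamma_1(G) \leq \dis^{\zeta_Q} = Z(G) \times Fix(\widehat{L_a})$, so $\gamma_1(G) = Z(G) \times Fix(\widehat{L_a})$.

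Now I split on whether $Fix(\widehat{L_a})$ is trivial. If $Fix(\widehat{L_a}) = 1$, then $\gamma_1(G) = Z(G) \cong \mathbb{Z}_p$ and $|G| = p^3$, giving case (i); here $Z(G) = \gamma_1(G)$ forces $G$ to be one of the two nonabelian groups of order $p^3$, consistent with nilpotency length $2$, and the statement is as claimed. If $Fix(\widehat{L_a}) \neq 1$, I need $|Fix(\widehat{L_a})| = p$ exactly (so that $|G| = p^4$) and $\gamma_1(G) \cong \mathbb{Z}_p^2$ and length exactly $3$. That $Fix(\widehat{L_a})$ has exponent $p$ and is small: since $\gamma_1(G) = Z(G) \times Fix(\widehat{L_a})$ with $Z(G) \cong \mathbb{Z}_p$, and $Fix(\widehat{L_a}) = Fix$ of an automorphism acting on the abelian group $\gamma_1(G)/\gamma_2(G)$ — here I would use Proposition \ref{sims}: picking $X$ lifting a $2$-element generating set of the abelian $G/\gamma_1(G)$ (which has order $p^2$, hence is generated by two elements, possibly one if cyclic, but it cannot be cyclic else $G$ abelian), $\gamma_1(G)/\gamma_2(G)$ is generated by the single commutator $[x,y]\gamma_2(G)$, so $\gamma_1(G)/\gamma_2(G)$ is cyclic, forcing $|\gamma_1(G)/\gamma_2(G)| = p$ (it cannot be trivial since $\gamma_1(G) \neq 1$ for a non-abelian group, and $\gamma_2(G) \lneq \gamma_1(G)$ for a $p$-group); hence $|\gamma_1(G)| \leq p \cdot |\gamma_2(G)| \leq p \cdot p$ once we see $|\gamma_2(G)| \leq p$, which follows because $\gamma_2(G) \leq Z(G)$: indeed $Q$ has length $2$ so $\gamma_2(Q) = 0_Q$, and one checks $\gamma_2(G) \leq \dis^{\gamma_1(Q)} = \gamma_1(G)$ acts trivially within blocks... — more cleanly, $\gamma_2(G) = [\gamma_1(G), G] = [Z(G) \times Fix(\widehat{L_a}), G] = [Fix(\widehat{L_a}), G] \leq \gamma_1(G)$, and since $G$ acts on $\gamma_1(G)/\gamma_2(G)$ of order $p$ it acts trivially there, giving $\gamma_2(G) = \gamma_3(G)$, hence $\gamma_2(G) = 1$ — wait, that would make $G$ nilpotent of class $2$, contradicting length $3$; so in fact $\gamma_2(G) \neq 1$, i.e. $\gamma_2(G) = Z(G) \cong \mathbb{Z}_p$ (it is a nontrivial subgroup of $\gamma_1(G)$ central in $G$), whence $|\gamma_1(G)| = |\gamma_2(G)| \cdot p = p^2$, so $Fix(\widehat{L_a}) \cong \mathbb{Z}_p$, $|G| = p^4$, and $\gamma_3(G) = [\gamma_2(G), G] = [Z(G), G] = 1$, so $G$ has nilpotency class exactly $3$. \textbf{The main obstacle} I expect is exactly this last bookkeeping: ruling out $\gamma_1(G)$ of order $> p^2$ and simultaneously showing $\gamma_2(G) \neq 1$ (so the class is $3$, not $2$) — one must use that $Q$ is \emph{not} principal in case (ii) (equivalently $Fix(\widehat{L_a}) \neq 1$) together with Corollary \ref{on 2-step nilpotency}, which tells us precisely that $\dis(Q)$ cannot be nilpotent of class $2$ when $Q$ is not principal, forcing the class to be $3$; and then the order constraint $|G| = p^3|Fix(\widehat{L_a})|$ pins everything down to $p^4$.
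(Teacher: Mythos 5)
Your overall skeleton matches the paper's (nilpotency length $2$ via Proposition \ref{2nilp}, the decomposition $\gamma_1(G)\cong Z(G)\times Fix(\widehat{L_a})$ via Propositions \ref{ker of [1,1]} and \ref{trans of 2step nil}, non-cyclicity of $G/\gamma_1(G)$, Proposition \ref{sims}, and Corollary \ref{on 2-step nilpotency} to separate the two cases), and case (i) is fine. But case (ii) has a genuine gap at the one step that cannot be done by pure group theory: the inclusion $\gamma_2(G)=[\gamma_1(G),G]\leq Z(G)$. The paper obtains it from $[\dis^{\zeta_Q},\dis(Q)]\leq \dis_{\zeta_Q}=Z(G)$ (citing Proposition 3.3(1) of \cite{CP}), using $\gamma_1(G)=\dis^{\zeta_Q}$. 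Your substitute reasoning does not work: the computation $[Z(G)\times Fix(\widehat{L_a}),G]=[Fix(\widehat{L_a}),G]\leq\gamma_1(G)$ only recovers the trivial inclusion $\gamma_2(G)\leq\gamma_1(G)$; the step ``$G$ acts on $\gamma_1(G)/\gamma_2(G)$ of order $p$'' presupposes the order you are trying to compute; and ``acting trivially there'' yields $[\gamma_1(G),G]\leq\gamma_2(G)$, not $\gamma_2(G)=\gamma_3(G)$, so the ensuing conclusion $\gamma_2(G)=1$ (which you then discard because it contradicts what you want) is a sign the argument has gone wrong rather than a proof by contradiction. The final assertion that $\gamma_2(G)$ ``is a nontrivial subgroup of $\gamma_1(G)$ central in $G$'' is exactly the missing claim, stated without justification. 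It really is needed: a $p$-group with $|Z(G)|=p$, $G/\gamma_1(G)$ elementary abelian of rank $2$ and $\gamma_1(G)$ abelian need not satisfy $[\gamma_1(G),G]\leq Z(G)$, so without the quandle-theoretic commutator inequality neither $|\gamma_2(G)|=p$ (hence $|G|=p^4$) nor $\gamma_3(G)=[\gamma_2(G),G]\leq[Z(G),G]=1$ (hence nilpotency length exactly $3$) follows.

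A secondary, smaller gap: to pass from ``$\gamma_1(G)/\gamma_2(G)$ is cyclic'' (Proposition \ref{sims} with two generators of $G/\gamma_1(G)$) to $|\gamma_1(G)/\gamma_2(G)|=p$, you need $\gamma_1(G)$ to have exponent $p$. You assert $\gamma_1(G)\cong\mathbb{Z}_p^2$ but never justify elementary abelianness; the paper gets it from Corollary \ref{embedding of ker}, which embeds $\dis^{\zeta_Q}$ into a power of $\dis([a]_{\zeta_Q})\cong\mathbb{Z}_p$ (and would in fact also give you $|\dis^{\zeta_Q}|\leq p^2$ directly, since $Q/\zeta_Q$ is affine of size $p^2$ and hence $2$-generated). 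Both gaps are repairable by invoking the corresponding results already available in the paper, but as written the key bound $|G|=p^4$ and the class-$3$ claim are not proved.
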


\begin{proof}
By Proposition \ref{2nilp} $\zeta_Q=\gamma_1(Q)=\c{Z(\dis(Q))}$ (since $Q$ is faithful, see \eqref{zeta_Q_Con_zeta_Q}), by Proposition \ref{strucure_of_K_N} we have that $\dis_{\zeta_Q}=Z(\dis(Q))$ and it has size $p$. Using Propositions \ref{ker of [1,1]} and \ref{trans of 2step nil}  $$\gamma_1(G)\cong Z(G)\times Fix(\widehat{\Lm{a}})\cong \mathbb{Z}_p^k,$$ for some $k$, where the last isomorphism follows from Corollary \ref{embedding of ker}.\\ Moreover $\dis(Q/\zeta_Q)\cong G/\gamma_1(G)$ can not be cyclic, otherwise $G/\Phi(G)$ would be cyclic, since $\gamma_1(G)\leq \Phi(G)$, and by virtue of Proposition \ref{Q conn iff Q/[1,1,] conn}(ii) $\dis(Q)$ would be cyclic as well. Therefore $\dis(Q/\zeta_Q)\cong \mathbb{Z}_p\times \mathbb{Z}_p$ and $\gamma_2(G)=[\gamma_1(G),G]\leq \dis_{\zeta_Q}=Z(G)$ ($[\dis^\alpha,\dis(Q)]\leq \dis_\alpha(Q)$ for every $\alpha\in Con(Q)$, see \cite[Proposition 3.3 (1)]{CP}). So either $\gamma_2(G)=Z(G)$ or $\gamma_2(G)=1$. This dichotomy delivers us the two cases we are aiming at:
\begin{itemize}
\item[(i)] if $\gamma_2(G)=1$, then according to Corollary \ref{on 2-step nilpotency} $Q$ is principal and so $Fix(\widehat{\Lm{a}})=1$ since $Q$ is latin (\ref{facts}(ii)).
\item[(ii)] If $\gamma_2(G)=Z(\dis(Q))$, then $\gamma_1(Q)/\gamma_2(Q)$ is cyclic by Proposition \ref{sims} and therefore $|\dis(Q)|=|\dis(Q/\zeta_Q)||Z(G)||\gamma_1(G)/\gamma_2(G)|=p^4$ and $G$ is nilpotent of length $3$.
\end{itemize}
\end{proof}
Using Proposition \ref{on principal case} we can provide a characterization of the \disp\, group in case (i) of Proposition \ref{Lem nec cond}. 
\begin{The}[Charaterization of faithful connected principal $p^3$-quandles]
\label{principal latin}Let $Q$ be a quandle of size $p^3$. The following are equivalent:
\begin{itemize}
\item[(i)] $Q$ is a principal latin quandle with $|Z(\dis(Q))|=p$. 
\item[(ii)] $Q\cong \Q(\mathbb{Z}_p^2\rtimes \mathbb{Z}_p,f)$ with $Fix(f)=1$.
\end{itemize} \qed
\end{The}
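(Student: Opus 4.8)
The plan is to run both implications through the minimal coset representation \eqref{min rep}, using the structural results already in hand; everything needed is essentially in Lemma \ref{facts}(ii), Lemma \ref{on principal case}, and the elementary computation of $[G,f]$.

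For the implication (i)$\Rightarrow$(ii) I would argue as follows. Since $Q$ is principal, all point stabilizers $\dis(Q)_a$ are trivial (Lemma \ref{facts}(ii)), so the minimal coset representation \eqref{min rep} collapses to $Q\cong\Q(\dis(Q),\widehat{\Lm a})$. Latinity of $Q$ gives connectedness, the size is $p^3$ by hypothesis, and $|Z(\dis(Q))|=p$ is assumed, so Lemma \ref{on principal case} applies and yields $\dis(Q)\cong\mathbb{Z}_p^2\rtimes\mathbb{Z}_p$; transporting $\widehat{\Lm a}$ through this isomorphism produces an automorphism $f$ with $Q\cong\Q(\mathbb{Z}_p^2\rtimes\mathbb{Z}_p,f)$. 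Finally, since $\Q(\dis(Q),\widehat{\Lm a})$ is a finite principal quandle, Lemma \ref{facts}(ii) says it is latin if and only if $Fix(\widehat{\Lm a})=1$, and $Q$ is latin by hypothesis, so $Fix(f)=1$.

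For (ii)$\Rightarrow$(i) the key observation is that $Fix(f)=1$ already forces connectedness. Indeed, writing $G=\mathbb{Z}_p^2\rtimes\mathbb{Z}_p$, the fibres of $\partial f\colon g\mapsto gf(g)^{-1}$ are exactly the left cosets of $Fix(f)$, so $\partial f$ is injective and hence bijective on the finite set $G$; therefore $[G,f]=\langle Im(\partial f)\rangle=G$. By the discussion preceding Lemma \ref{commutator_with_trans_is_trans}, the $\dis(Q)$-orbit of a point of $\Q(G,f)$ equals the corresponding left orbit of $[G,f]=G$, so $Q$ is connected, and moreover $\dis(Q)\cong[G,f]/Core_G(1)=G$. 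Consequently $|Z(\dis(Q))|=|Z(G)|=p$ and $|Q|=|G|=p^3$; the quandle $Q=\Q(G,f)$ is principal by definition; and it is latin because it is a finite principal quandle with $Fix(f)=1$ (Lemma \ref{facts}(ii) once more).

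All the steps are short and I do not anticipate a genuine obstacle: the only place needing a moment's attention is checking that the hypotheses of Lemma \ref{on principal case} are in force for (i)$\Rightarrow$(ii), but connectedness is free from latinity and the size is given. One could instead prove (ii)$\Rightarrow$(i) by way of Corollary \ref{nilpotent case}, but that route would first require deriving $Fix(f_{\Phi(G)})=1$ from $Fix(f)=1$ --- e.g.\ by noting that $f$ acts fixed-point-freely on $Z(G)\cong\mathbb{Z}_p$ and then examining the action of $f$ on $\langle g,Z(G)\rangle$ for a hypothetical fixed coset $gZ(G)$ --- so I would prefer the direct $\partial f$-bijectivity argument above.
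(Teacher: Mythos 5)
Your proof is correct and assembles exactly the ingredients the paper intends: the theorem is stated without proof as an immediate consequence of Lemma \ref{facts}(ii), Lemma \ref{on principal case} and the minimal coset representation, which is precisely how you argue (i)$\Rightarrow$(ii). For (ii)$\Rightarrow$(i), your direct observation that $Fix(f)=1$ makes $\partial f$ bijective, hence $[G,f]=G$, $Q$ connected and $\dis(Q)\cong G$ with $|Z(\dis(Q))|=p$, is the same underlying fact the paper would invoke via Corollary \ref{nilpotent case} and the discussion preceding Lemma \ref{commutator_with_trans_is_trans}, so the approaches are essentially identical.
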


Let us consider case (ii) of Proposition \ref{Lem nec cond}. The group $G=\dis(Q)$ and the automorphism $f=\widehat{L_a}$ need to satisfy some necessary conditions, namely:
\begin{align}
& \gamma_2(G)= Z(G),\qquad\qquad \gamma_1(G)\cong \mathbb{Z}_p\times \mathbb{Z}_p \tag{G},\label{condition_on_G}\\
& \gamma_1(G)\cong Fix(f)\times Z(G) \tag{A}\label{condition_on_aut}
\end{align}

We are going to show that conditions \eqref{condition_on_G} and \eqref{condition_on_aut} are also sufficient for the pair $(G,f)$ to provide a latin quandles of size $p^3$ as in case (ii) of Proposition \ref{Lem nec cond}. 
The classification of groups of order $p^4$ is known and by a direct inspection of their presentations provided in \cite{tedesco} it is straightforward to verify the following:
\begin{Pro}
\label{admissible groups}
Let $G$ be a group of size $p^4$. Then $G$ satifies \eqref{condition_on_G} if and only if $G$ is one of the following groups:
\begin{align*}
G_7&=\langle g_1, g_2, g_3, g_4, \, | \, [g_2,g_1]=g_3, \, [g_3,g_1]=g_4\rangle,\\
G_8&=\langle g_1, g_2, g_3, g_4, \, | \, [g_2,g_1]=g_3, \, [g_3,g_1]=g_1^p=g_4\rangle.\\
G_9 &=\langle g_1, g_2, g_3, g_4, \, | \, [g_2,g_1]=g_3, \, [g_3,g_1]=g_2^p=g_4\rangle,\\
G_{10} &=\langle g_1, g_2, g_3, g_4, \, | \, [g_2,g_1]=g_3, \, [g_3,g_1]=g_2^p=g_4^w\rangle,
\end{align*}
where $w$ is any primitive root of unity. \qed
\end{Pro}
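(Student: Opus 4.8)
The plan is to recast condition~\eqref{condition_on_G} as a statement about the nilpotency class of $G$, and then to read the answer off the classification of groups of order $p^4$ recorded in \cite{tedesco}.

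First I would show that \eqref{condition_on_G} forces $G$ to have maximal class, that is, nilpotency class $3$. Indeed $\gamma_1(G)\cong\mathbb{Z}_p\times\mathbb{Z}_p$ has order $p^2$, so $|G:\gamma_1(G)|=p^2$; the subgroup $\gamma_2(G)=[\gamma_1(G),G]$ lies inside $\gamma_1(G)$ and, by nilpotence of $G$, cannot equal $\gamma_1(G)$ (otherwise $\gamma_1(G)=\gamma_2(G)=\gamma_3(G)=\dots=1$), so $|\gamma_2(G)|\leq p$; since $\gamma_2(G)=Z(G)\neq 1$ we get $|\gamma_2(G)|=|Z(G)|=p$, and then $\gamma_3(G)=[Z(G),G]=1$. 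Conversely, if $|G|=p^4$ and $G$ has class $3$, a short argument gives $|G:\gamma_1(G)|=p^2$ (otherwise $G$ would be cyclic, or $\gamma_1(G)$ would have order $p$ and force class $\leq 2$), hence $|\gamma_1(G)|=p^2$ and $|\gamma_2(G)|=p$; moreover $|Z(G)|=p$ (else $G/Z(G)$ has order $\leq p^2$, is abelian, and $G$ has class $\leq 2$), and $\gamma_3(G)=[\gamma_2(G),G]=1$ gives $\gamma_2(G)\leq Z(G)$, so $\gamma_2(G)=Z(G)$. Thus, among groups of order $p^4$, condition \eqref{condition_on_G} is equivalent to: $G$ has class $3$ and $\gamma_1(G)$ is elementary abelian of rank $2$.

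Next I would run through the list of groups of order $p^4$ in \cite{tedesco}. Abelian groups are discarded at once ($\gamma_1(G)=1$), as are the groups of class $2$ (for them $\gamma_2(G)=1\neq Z(G)$); only the groups of maximal class survive. For each of these the presentation exhibits $\gamma_1(G)=\langle g_3,g_4\rangle$ with $[g_3,g_4]=g_3^p=g_4^p=1$, so $\gamma_1(G)\cong\mathbb{Z}_p\times\mathbb{Z}_p$, and $\gamma_2(G)=\langle g_4\rangle=Z(G)$; matching with \cite{tedesco} shows that the maximal-class groups of order $p^4$ are precisely $G_7,G_8,G_9,G_{10}$. Combining this with the previous paragraph, these four groups are exactly the groups of order $p^4$ satisfying \eqref{condition_on_G}.

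The step that actually requires attention is this last one: one must reconcile the commutator convention of \cite{tedesco}, spell out the relations left implicit in the four presentations (in particular the orders of $g_3$ and $g_4$ and the meaning of the primitive root $w$ appearing in $G_{10}$), and verify that $G_7,\dots,G_{10}$ are pairwise non-isomorphic and do exhaust the maximal-class groups of order $p^4$. Under the standing hypothesis $p>3$ the relevant $p$-groups are regular, which makes these checks routine and, in passing, renders the clause ``$\gamma_1(G)$ elementary abelian'' automatic for a maximal-class group of this order; the different behaviour at $p=2$ (semidihedral-type maximal-class groups of order $p^4$ having cyclic derived subgroup) does not occur here.
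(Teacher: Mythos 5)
Your argument is correct and ultimately takes the same route as the paper, which simply asserts that the result follows "by direct inspection" of the presentations in \cite{tedesco}: both proofs reduce to consulting the classification of groups of order $p^4$. Your preliminary equivalence of condition \eqref{condition_on_G} with ``$G$ has maximal class and $\gamma_1(G)$ is elementary abelian of rank $2$'' is a sound and useful way to organize that inspection (it discards the abelian and class-$2$ groups at once), and the remaining verification on $G_7,\dots,G_{10}$ is exactly the check the paper leaves implicit.
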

%
 The details of the description of the automorphisms of the groups $G_i$, $7\leq i\leq 10$ we are using in the next of the proof can be found in \cite[Section 5.4-5.8]{tedesco}).
\begin{The}[Charaterization of faithful connected non-principal $p^3$-quandles]
\label{P_alla4}Let $G_i$ be a group with $i=7\ldots 10$ and $p>3$. The following are equivalent:
\begin{itemize}
\item[(i)] $Q=\Q(G_i,Fix(f),f)$ is a latin quandle of size $p^3$ with $\dis(Q)\cong G_i$;
\item[(ii)] $(G_i,f)$ satisfy condition \eqref{condition_on_aut}.
\end{itemize}\end{The}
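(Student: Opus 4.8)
\textbf{Proof proposal for Theorem \ref{P_alla4}.}

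The plan is to prove the two implications separately, with $(i)\Rightarrow(ii)$ being essentially already in hand and $(ii)\Rightarrow(i)$ carrying the real content. For $(i)\Rightarrow(ii)$: assuming $Q=\Q(G_i,Fix(f),f)$ is a latin quandle of size $p^3$ with $\dis(Q)\cong G_i$, observe that $|G_i|=p^4$ and $|Q|=p^3$ forces $|Fix(f)|=p$; since $Q$ is faithful (being latin), $\dis(Q)_a=Fix(\widehat{L_a})$, and the minimal coset representation identifies $\widehat{L_a}$ with $f$ up to the isomorphism. Now $|Z(G_i)|=p$ (each $G_i$ in Proposition \ref{admissible groups} has center $\langle g_4\rangle\cong\mathbb{Z}_p$ by \eqref{condition_on_G}), so we are in case (ii) of Proposition \ref{Lem nec cond}, which gives exactly $\gamma_1(G)\cong Z(G)\times Fix(\widehat{L_a})$, i.e.\ condition \eqref{condition_on_aut}. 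One must also check $Fix(f)\neq Z(G)$ so that the product is genuinely $\mathbb{Z}_p^2$; this follows because if $Fix(f)=Z(G)$ then $\gamma_1(G)=Z(G)$ has order $p$, contradicting \eqref{condition_on_G}.

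For $(ii)\Rightarrow(i)$, assume $(G_i,f)$ satisfies \eqref{condition_on_aut}, so $\gamma_1(G)\cong Fix(f)\times Z(G)\cong\mathbb{Z}_p^2$ with $Fix(f)$ of order $p$ and $H:=Fix(f)$ a valid choice of subgroup contained in the fixed points, hence $Q=\Q(G,H,f)$ is a well-defined coset quandle. The first step is \textbf{connectedness}: I would verify that $G=[G,f]$, equivalently that $f_{\Phi(G)}$ has no eigenvalue $1$ on $G/\Phi(G)$, and then apply Corollary \ref{nilpotent case}. Here $\Phi(G)=\gamma_1(G)$ for all four groups (they are $2$-generated $p$-groups of maximal class or close to it), so $G/\Phi(G)\cong\mathbb{Z}_p^2$ is spanned by the images of $g_1,g_2$; the condition $Fix(f)\subseteq\gamma_1(G)=\Phi(G)$ from \eqref{condition_on_aut} says precisely that no nonzero vector of $G/\Phi(G)$ is fixed by $f_{\Phi(G)}$, i.e.\ $1$ is not an eigenvalue. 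So Corollary \ref{nilpotent case} gives connectedness, and Proposition \ref{Q conn iff Q/[1,1,] conn}(i) gives $\dis(Q)\cong G/\Core_G(H)$. The second step is to show $\Core_G(H)=1$, so that $\dis(Q)\cong G$: since $H=Fix(f)$ has order $p$, its core is either trivial or all of $H$; but $\Core_G(H)\trianglelefteq G$ and $H\cap Z(G)=1$ (the product in \eqref{condition_on_aut} is direct), while every nontrivial normal subgroup of a $p$-group meets the center nontrivially, forcing $\Core_G(H)=1$.

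The third and main step is \textbf{latinness} of $Q$. By Proposition \ref{faithful implies latin} it suffices to show $Q$ is faithful, $\zeta_Q$ is central with latin quotient, and then invoke the central-extension machinery — but more directly: I would argue $Q$ is faithful (for a principal-type coset quandle $\Q(G,Fix(f),f)$ one checks $\lambda_Q = 0_Q$ using that $L_{gH}=L_{g'H}$ forces $g^{-1}g'\in Fix(f)=H$, hence $gH=g'H$), then by Corollary to Proposition \ref{2nilp} a faithful connected quandle of size $p^3$ is automatically latin. This last reduction is clean and is where the size hypothesis $|Q|=p^3$ does the work; I would also re-examine Proposition \ref{2nilp} to confirm $|Z(\dis(Q))|=p$ here, which holds since $\dis(Q)\cong G_i$ has center of order $p$.

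\textbf{Main obstacle.} The delicate point is the interface between the abstract automorphism $f$ (only constrained by \eqref{condition_on_aut}) and the structural data extracted in Proposition \ref{Lem nec cond}: one must be sure that \emph{every} $f\in\aut{G_i}$ with $\gamma_1(G_i)\cong Fix(f)\times Z(G_i)$ actually yields $\gamma_2(G_i)=Z(G_i)$ (so that the nilpotency-length-$3$ branch, not the abelian-$\gamma_1$ branch, is the one realized) and that no such $f$ accidentally makes $G/\Phi(G)$ cyclic or collapses the orbit structure. This is precisely where the explicit presentations of Proposition \ref{admissible groups} and the automorphism descriptions of \cite[Section 5.4--5.8]{tedesco} are needed, and where the restriction $p>3$ enters (small primes produce coincidences among eigenvalues and extra automorphisms). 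I expect the bulk of the genuine verification — pushed to Appendix \ref{appendix} — to be checking, group by group, that \eqref{condition_on_aut} is compatible with \eqref{condition_on_G} being inherited by the pair and that the resulting $Q$ has the claimed size $p^3$ rather than $p^4$ or smaller.
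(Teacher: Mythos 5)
Your overall architecture matches the paper's: (i)$\Rightarrow$(ii) is Proposition \ref{Lem nec cond}, and (ii)$\Rightarrow$(i) rests on Corollary \ref{nilpotent case} for connectedness, triviality of $Core_{G_i}(Fix(f))$ for $\dis(Q)\cong G_i$, and faithfulness for latinness. Your core-triviality argument (a nontrivial normal subgroup of a $p$-group meets the center, while $Fix(f)\cap Z(G_i)=1$ by the directness in \eqref{condition_on_aut}) is essentially the paper's, and your route to latinness (direct verification of faithfulness, then the corollary following Proposition \ref{2nilp}) is a harmless variant of the paper's (show $Q$ is non-principal, hence faithful by Theorem \ref{non-faithful p to the cube}, hence latin).

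The gap is in the connectedness step. You assert that $Fix(f)\subseteq\gamma_1(G)=\Phi(G)$ ``says precisely'' that $f_{\Phi(G)}$ has no nonzero fixed vector on $G/\Phi(G)$. Only one implication holds: $Fix(f_{\Phi(G)})=1$ forces $Fix(f)\subseteq\Phi(G)$, but not conversely --- a coset $a\Phi(G)$ can be fixed by $f_{\Phi(G)}$ with $f(a)=a\phi$ for some $1\neq\phi\in\Phi(G)$, so fixed points of the induced map need not lift to fixed points of $f$. (For instance $G=\mathbb{Z}_{p^2}$, $f(x)=(1+p)x$: here $Fix(f)=\Phi(G)$ yet $f_{\Phi(G)}=\mathrm{id}$.) Since the automorphisms in question may have order divisible by $p$, no coprime-action argument rescues the claim. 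This is exactly where the paper does the real work: for each $G_i$ it writes $f|_{\Phi(G_i)}$ and $f_{\Phi(G_i)}$ as explicit matrices using the presentations from \cite{tedesco}, shows that \eqref{condition_on_aut} forces one eigenvalue of $f|_{\Phi(G_i)}$ to equal $1$ and the one on $Z(G_i)$ to differ from $1$, and then reads off from the specific relation between the two spectra (e.g.\ $u_1v_2$, $u_1^2v_2$ versus $u_1$, $v_2$ for $G_7$) that both eigenvalues of $f_{\Phi(G_i)}$ are $\neq 1$. Your ``main obstacle'' paragraph instead worries about $\gamma_2(G)=Z(G)$ and condition \eqref{condition_on_G}, which are already guaranteed by restricting to $G_7,\ldots,G_{10}$; the verification you actually still owe is the eigenvalue computation above.
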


\begin{proof}
The implication (i) $\Rightarrow$ (ii) is Proposition \ref{Lem nec cond}. In order to prove (ii) $\Rightarrow$ (i) we will prove that $Q$ is a non-principal connected quandle: indeed, if $Q$ is non-principal, then $Q$ is faithful (\ref{non-faithful p to the cube}) and so also latin (\ref{faithful implies latin}).
By virtue of Corollary \ref{nilpotent case} in order to show that $Q$ is connected it is enough to show that $f_{\Phi(G)}$ has no eigenvalue equal to $1$. Note that $\gamma_1(G_i)=\Phi(G_i)$ for every $i=7,\ldots 10$.

Case $i=7$: the subgroup $\Phi(G_7)$ is elementary abelian, so the restriction of $f$ to $\Phi(G_7)$, is represented by a matrix with respect to a basis $a_3,a_4$ with $a_4\in Z(G_7)$. So we have 
\begin{displaymath}
f|_{\Phi(G_7)}=
\begin{bmatrix}
u_1v_2 & 0 \\  
  u_1 v_3+\frac{u_1-1}{2}u_1 v_2 & u_1^2 v_2 \\
  \end{bmatrix},\quad  f_{\Phi(G_7)}=\begin{bmatrix}
u_1& 0\\
u_2 & v_2
\end{bmatrix}.
  \end{displaymath}
The eigenvalues of matrix $f|_{\Phi(G)}$ are $\lambda_1= u_1v_2$ and $\lambda_2=u_1^2 v_2$ and $Z(G_7)$ is the eigenspace relative to $\lambda_2$. If the pair $(G_7,f)$ satisfies the contidion \eqref{condition_on_aut} then one of the eigenvalues of $M$ is 1 and $\gamma_1(G_7)=\Phi(G_7)$ is the product of the center and of the eigenspace relative to $1$. Then $\lambda_1=1$, i.e. $u_1 v_2=1$ and $\lambda_2= u_1^2 v_2=u_1\neq 1$ and then $v_2\neq 1$. The eigenvalues of $f_{\Phi(G)}$ are $u_1$ and $v_2$, both different from $1$.

A similar argument will do in the other cases for which we just list the relevant matrices. 

Case $i=8$: 
\begin{displaymath}
f|_{\Phi(G_8)}
=\begin{bmatrix}
1 &  0\\
u_1 u_3+\frac{u_1-1}{2} & u_1
\end{bmatrix},\quad f_{\Phi(G_8)}=\begin{bmatrix}
u_1 &  0\\
u_2 & u_1^{-1}
\end{bmatrix}.
\end{displaymath}

Case $i=9,10$: these two cases can be treated together, since the description of automorphisms of the two groups in terms of their actions on the relative generators is the very same. 
\begin{displaymath}
f|_{\Phi(G_i)}
=\begin{bmatrix}
\pm v_2 &  0\\
\pm v_3 & v_2
\end{bmatrix},\quad f_{\Phi(G_i)}=\begin{bmatrix}
\pm 1 &  0\\
0 & v_2
\end{bmatrix}.
\end{displaymath}
In all the cases the center of $G_i$ is the unique normal subgroup of $G_i$ contained in $\gamma_1(G_i)$. Therefore $Fix(f)$ is not a normal subgroup, so $Core_{G_i}(Fix(f))=1$ and $\dis(Q)\cong G_i$  (Proposition \ref{Q conn iff Q/[1,1,] conn}).
\end{proof}

\subsection*{Isomorphism classes}
In order to compute the isomorphism classes of $p^3$-quandles we will be using a particular instance of the following isomorphism theorem for connected quandles.
\begin{The}[Isomorphism Theorem for connected coset quandles]\label{iso theorem}
Let $G$ be a group, $f_1, f_2\in \aut{G}$ and $Q_i=\Q(G,H_i,f_i)$ and $\dis(Q_i)= G$. Then $Q_1\cong Q_2$ if and only if there exists $h\in \aut{G}$ such that $f_2=h^{-1} f_1 h$ and $h(H_2)=H_1$.
\end{The}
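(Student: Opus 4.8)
The plan is to realize both isomorphism directions concretely via the coset quandle structure. First I would recall that for a connected coset quandle $Q=\Q(G,H,f)$ with $\dis(Q)\cong G$, the minimal coset representation \eqref{min rep} reconstructs the pair $(G,H,\widehat{L_{eH}})$ intrinsically from $Q$; in particular $H$ is (conjugate to) the point stabilizer $\dis(Q)_a$ and $f$ is (conjugate to) $\widehat{L_a}$. So an isomorphism $Q_1\cong Q_2$ must match these structural data up to an automorphism of $G$, which is where the automorphism $h$ will come from.

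For the \emph{if} direction, suppose $h\in\aut{G}$ with $f_2=h^{-1}f_1h$ and $h(H_2)=H_1$. I would simply check that the induced map $\bar h\colon G/H_2\to G/H_1$, $aH_2\mapsto h(a)H_1$, is well defined (because $h(H_2)=H_1$), bijective, and a quandle homomorphism: for $aH_2,bH_2\in G/H_2$,
\begin{align*}
\bar h(aH_2\ast bH_2)&=\bar h\bigl(af_2(a^{-1}b)H_2\bigr)=h(a)h(f_2(a^{-1}b))H_1\\
&=h(a)f_1(h(a)^{-1}h(b))H_1=h(a)H_1\ast h(b)H_1=\bar h(aH_2)\ast\bar h(bH_2),
\end{align*}
using $h\circ f_2=f_1\circ h$. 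This is a routine computation.

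For the \emph{only if} direction, let $\phi\colon Q_1\to Q_2$ be a quandle isomorphism. I would use functoriality of the $\dis$ construction: $\phi$ induces a group isomorphism $\phi_*\colon \dis(Q_1)\to\dis(Q_2)$ sending $L_aL_b^{-1}\mapsto L_{\phi(a)}L_{\phi(b)}^{-1}$, hence (identifying $\dis(Q_i)$ with $G$ via the hypothesis $\dis(Q_i)=G$) an element $h\in\aut{G}$. One then checks $\phi_*(\dis(Q_1)_a)=\dis(Q_2)_{\phi(a)}$, so $h$ carries the stabilizer of the basepoint of $Q_1$ to that of $Q_2$; after adjusting $\phi$ by a left translation in $Q_2$ (which is available because $Q_2$ is connected, and corresponds to conjugating by an element of $G$, which does not change the conjugacy class) one may assume $\phi$ sends basepoint to basepoint, giving $h(H_1)=H_2$, equivalently $h^{-1}(H_2)=H_1$ after possibly replacing $h$ by $h^{-1}$ — here I need to be careful about which direction is stated, and I would match conventions so that $h(H_2)=H_1$. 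Finally, the conjugation relation between $f_1$ and $f_2$ comes from $\phi_*$ intertwining $\widehat{L_a}$ with $\widehat{L_{\phi(a)}}$: since $\phi_*(L_a)=L_{\phi(a)}$, conjugation by $\phi_*$ sends $\widehat{L_a}$ to $\widehat{L_{\phi(a)}}$, and under the identification with $\aut{G}$ via the coset representation this says $h f_1 h^{-1}=f_2$ up to the inner adjustment already absorbed.

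The main obstacle I anticipate is bookkeeping rather than conceptual: getting the variance of $h$ and the placement of the inner-automorphism correction exactly right, since the coset quandle $\Q(G,H,f)$ is insensitive to replacing $f$ by $f\circ\widehat{c}$ for $c\in H$ and to replacing $H$ by a conjugate, so the statement is implicitly asserting that $\phi$ can be normalized to kill these ambiguities. I would handle this by first reducing via connectedness to basepoint-preserving isomorphisms (absorbing a left translation of $Q_2$), then reading off $h$, $h(H_1)$, and the conjugacy of $f_1,f_2$ directly from the action of $\phi_*$ on generators and stabilizers, and finally invoking the minimal coset representation \eqref{min rep} to identify the abstract data $(G,H_i,f_i)$ with the intrinsic data of $Q_i$ so that nothing is lost in the identification $\dis(Q_i)=G$.
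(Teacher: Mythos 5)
Your proof is correct and follows essentially the same route as the paper's: the explicit coset map $aH_2\mapsto h(a)H_1$ for the \emph{if} direction, and the induced isomorphism $L_aL_b^{-1}\mapsto L_{\phi(a)}L_{\phi(b)}^{-1}$ of displacement groups for the \emph{only if} direction. You in fact supply more detail than the paper does, in particular the normalization of $\phi$ by a left translation so that point stabilizers and the automorphisms $\widehat{L_a}$ match up exactly.
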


\begin{proof}
If such $h\in \aut{G}$ exists, then $\widehat{h}(aH_1)=h(a)H_2$ is a quandle isomorphism between $Q_1$ and $Q_2$. On the other hand, if $\phi: Q_1\longrightarrow Q_2$ is a quandle isomorphism, the mapping defined as
\begin{displaymath}
\pi_\phi:\dis(Q_1)\longrightarrow \dis(Q_2),\quad L_a L_b\inv \mapsto L_{\phi (a)} L_{\phi(b)}^{-1},
\end{displaymath}
for every $a,b\in Q_1$, is a well-defined group isomorphism with the desired properties.
\end{proof}

\begin{Cor}\label{corollary iso theo}
Let $G$ be a group, $f_1, f_2\in \aut{G}$ and $Q_i=\Q(G,H_i,f_i)$ and $\dis(Q_i)= G$. If $Q_1$ and $Q_2$ are both faithful or principal then $Q_1\cong Q_2$ if and only if there exists $h\in \aut{G}$ such that $f_2=h^{-1} f_1 h$.\qed
\end{Cor}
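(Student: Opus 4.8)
The plan is to derive Corollary \ref{corollary iso theo} directly from the Isomorphism Theorem \ref{iso theorem} by observing that for faithful (resp. principal) quandles the choice of the subgroup $H_i$ is forced by the automorphism $f_i$, so the extra condition $h(H_2)=H_1$ in Theorem \ref{iso theorem} becomes automatic once $f_2 = h^{-1} f_1 h$. Concretely, if $Q_i$ is principal then $H_i = 1$, and $h(1) = 1$ always holds, so the two statements coincide. If $Q_i$ is faithful then, as recalled just before Lemma \ref{facts}, $\dis(Q_i)_a = Fix(\widehat{L_a})$; in the coset representation $Q_i = \Q(G, H_i, f_i)$ over $G = \dis(Q_i)$ the stabilizer of the base point is precisely $H_i$, and the inner automorphism $\widehat{L_a}$ corresponds to $f_i$, so $H_i = Fix(f_i)$ is determined by $f_i$ alone.

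First I would split into the two cases (principal, faithful) and in each case show $H_i$ is determined by $f_i$. Then, assuming $f_2 = h^{-1} f_1 h$ for some $h \in \aut{G}$, I would check $h(H_2) = H_1$: in the principal case both sides are trivial; in the faithful case $h(H_2) = h(Fix(f_2)) = h(Fix(h^{-1} f_1 h)) = Fix(f_1) = H_1$, using that $h$ maps the fixed-point subgroup of a conjugate automorphism onto the fixed-point subgroup of the original (an element $x$ is fixed by $h^{-1} f_1 h$ iff $h(x)$ is fixed by $f_1$). With $h(H_2) = H_1$ in hand, Theorem \ref{iso theorem} gives $Q_1 \cong Q_2$. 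The converse direction is immediate: if $Q_1 \cong Q_2$, Theorem \ref{iso theorem} already supplies an $h \in \aut{G}$ with $f_2 = h^{-1} f_1 h$, and we simply forget the condition on the subgroups.

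There is essentially no main obstacle here — the corollary is a clean specialization — but the one point requiring a word of care is the claim that in the minimal coset representation of a faithful connected quandle over $G = \dis(Q)$ the stabilizer subgroup is exactly $Fix(\widehat{L_a})$ rather than merely contained in it; this is exactly the statement $\dis(Q)_a = Fix(\widehat{L_a})$ recorded in the text for faithful quandles, so it can be cited directly. One should also note that the hypothesis of the corollary requires $Q_1$ and $Q_2$ to be simultaneously of the same type (both principal or both faithful) for the argument to produce matching subgroups; mixed cases are not covered, which is consistent with the statement as written.

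Thus the proof is short: reduce to Theorem \ref{iso theorem}, observe $H_i$ is forced by $f_i$ in each of the two cases, and verify that $f_2 = h^{-1}f_1 h$ forces $h(H_2) = H_1$.
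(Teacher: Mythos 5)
Your proof is correct and follows essentially the same route as the paper: the paper's own proof also observes that $H_i=Fix(f_i)$ in the faithful case (resp.\ $H_i=1$ in the principal case), so conjugacy of $f_1$ and $f_2$ automatically yields the subgroup condition $h(H_2)=H_1$ of Theorem \ref{iso theorem}. Your explicit verification that $h(Fix(h^{-1}f_1h))=Fix(f_1)$ just spells out a step the paper leaves implicit.
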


\begin{proof}
	If $Q_1$ and $Q_2$ are faithful (resp. principal) then $H_i=Fix(f_i)$ for $i=1,2$ (resp. $H_1=H_2=1$). So if $f_1$ and $f_2$ are conjugate the second condition of Theorem \ref{iso theorem} is satisfied.
\end{proof}

According to Corollary \ref{corollary iso theo}, isomorphism classes of connected quandles of size $p^3$ are parametrized by pairs $(G,[f]_\sim)$ where $[f]_\sim$ denotes the conjugacy class of $f$ in $\aut{G}$. Theorem \ref{principal latin} and Theorem \ref{P_alla4} give necessary and sufficient conditions on the pairs $(G,f)$ in order to provide a quandle with the desired properties. We are now able to give an explicit description of isomorphism classes of connected quandles of size $p^3$ with $p>3$ and we refer to Section \ref{appendix} for the explicit computations of conjugacy classes.

The automorphisms of the group $\mathbb{Z}_p^2\rtimes \mathbb{Z}_p=\langle g_1,g_2\rangle$ are described by their action on the two generators. Table \ref{Tab2} collects isomorphism classes of (non affine) principal connected quandles.
\begin{table}[!htb]
\caption{Principal connected quandle of size $p^3$.}
\label{Tab2}
\begin{tabular}{|l|l|l|c|c|}
\hline
  & Parameters & Latin & \#  \\
 \hline
 $g_1\mapsto g_1^\lambda$, \quad  $g_2\mapsto g_2^\mu$ & $\lambda \leq \mu \in \{2,\ldots, p-1\}$, $\lambda \mu \neq 1$& yes & $\frac{(p-1)(p-3)}{2}$\\
 \hline
 $g_1\mapsto g_1^\lambda$, \quad  $g_2\mapsto g_1 g_2^\lambda$ & $\lambda \in \{2,\ldots, p-2\}$ & yes & $p-3$\\
 \hline
  $g_1\mapsto g_2$, \quad  $g_2\mapsto  g_1^{-a} g_2^{-b}$ & $x^2+bx+a$ is an irreducible polynomial and  $a\neq 1$ & yes & $\frac{(p-1)^2}{2}$\\
 \hline
 $g_1\mapsto g_1^{\lambda}$, \quad  $g_2\mapsto g_2^{\lambda^{-1}}$ & $\lambda< \lambda^{-1}, \in \{2,\ldots, p-1\}$& no & $\frac{p-1}{2}$\\
 \hline
 $g_1\mapsto g_1^{-1}$, \,\,  $g_2\mapsto g_1 g_2^{-1}$ &  & no & $1$\\
 \hline
  $g_1\mapsto g_2$, \quad  $g_2\mapsto  g_1^{-1} g_2^{-b}$ & $x^2+bx+1$ is an irreducible polynomial & no & $\frac{p-1}{2}$\\
 \hline
\end{tabular}
\end{table}

For non principal quandles over $G_i$, $7\leq i\leq 10$ the automorphisms of the group are determined by their image on the generators of the power-commutator presentation, which are denoted by $g_1,g_2,g_3,g_4$ with $\gamma_1(G_i)=\langle g_3,g_4\rangle$ and $Z(G)=\langle g_4\rangle$. If $i=7,8$, then $g_2,g_3,g_4$ generate a characteristic elementary abelian subgroup of rank $3$, and if $i=9,10$ then $g_3, g_4$ generate an elementary abelian subgroup of rank $2$. So the action of an automorphism is given by its restriction to the above mentioned elementary abelian subgroup (denoted by $M$) described by a matrix and by the action on the other generators.  For all the representative of the isomorphism classes of such quandles the restriction to $M$ is given by one of the following matrices
\begin{displaymath}
i=7:\quad F_{a,s}=\begin{bmatrix}
a^{-1} & 0 & 0\\
0 & 1& 0\\
 s & \frac{a-1}{2} & a
 \end{bmatrix}\qquad i=8: \quad G_{a,k}=\begin{bmatrix}
a^{-1} & 0 & 0\\
0 & 1& 0\\
 k & \frac{a-1}{2} & a
\end{bmatrix},\quad i=9,10:\quad F=\begin{bmatrix}
1 & 0\\
 0 & -1 
\end{bmatrix},
\end{displaymath}
where $0\leq a,k,s\leq p-1$ and $a\neq 0$. In Table \ref{Tab1} the isomorphism classes of such quandles are given and $w$ denotes a non quadratic residue modulo $p$.
%
%
\begin{center}
\begin{table}[!h]
\caption{Non-principal connected quandle of size $p^3$.}
\label{Tab1}
\begin{tabular}{|c|l|l|l|c|}
\hline
$\dis(Q)$ & $f|_M$ & & Parameters & \# \\
\hline
$G_7$ &  $F_{a,0}$ & $g_1\mapsto  g_1^{a}g_2^b $ & $a\in \{2,\ldots,p-2\}$, $b\in \{0,1\}$ & $2(p-3)$\\
\hline
$G_7$ &  $F_{-1,0}$ & $g_1\mapsto  g_1^{-1}g_2^b g_4^{c} $ & $(b,c)\in \{ (0,0), (1,0), (0,1)\}$& $3$\\
\hline
$G_7$ &  $F_{-1,s}$ & $g_1\mapsto  g_1^{-1}g_2^b  $ & $(s,b)\in \{ (1,0), (1,1), (w,0),(w,1)\}$& $4$\\
\hline
$G_8$ &  $G_{a,0}$ & $g_1\mapsto  g_1^{a}g_2^b  $ & $a\in \{2,\ldots,p-2\}$, $b\in \{0,\ldots p-1\}$& $p(p-3)$\\
\hline
$G_8$ &  $G_{-1,k}$ & $g_1\mapsto  g_1^{p-1}g_2^s  $ & $k\in \{0,\ldots p-1\}$, $s\in \{1,w\}$& $2p$\\
\hline
$G_8$ &  $G_{-1,k}$ & $g_1\mapsto  g_1^{p-1}  $ & $k\in \{1,w\}$&$2$\\
\hline
$G_8$ &  $G_{-1,0}$ & $g_1\mapsto  g_1^{p-1}g_4^k  $ & $k\in \{0,\ldots,p-1\}$&$p$\\
\hline
$G_9$ &  $F$ & $g_1\mapsto  g_1^{-1} g_3^k  $,\quad  $g_2\mapsto  g_2^{-1}g_3^s  $  &  $k\in \{0,1\}$, $s\in \{0,\ldots p-1\}$&$2p$\\
\hline
$ G_{10}$ &  $F$ & $g_1\mapsto  g_1^{-1} g_3^k  $,\quad  $g_2\mapsto  g_2^{-1}g_3^s  $  &  $k\in \{0,1\}$, $s\in \{0,\ldots p-1\}$&$2p$\\
\hline
\end{tabular}
\end{table}
\end{center}
\medskip

\subsection*{Bruck Loops}\label{Sec:Bruck}
It is a well known fact that Bruck Loops are in one to one correspondence with involutory latin quandles, \cite{Survey}. Moreover Bruck Loops of odd order are in one-to-one correspondence with the so-called Gamma Loops, which contains the class of commutative automorphic Loops \cite{MarkG}. The involutory latin quandles of size $p^3$ are those in Table \ref{Tab3}.
\begin{table}[!h]
\caption{Involutive latin quandles of order $p^3$.}
\label{Tab3}
\begin{tabular}{|c|l|l|}
\hline
$\dis(Q)$ & $f|_M$ &  \\
\hline
$G_7$ &  $F_{-1,0}$ & $g_1\mapsto  g_1^{-1}$  \\
\hline
$G_8$ &  $G_{-1,0}$ & $g_1\mapsto  g_1^{-1}$\\
\hline
$G_9$ &  $F$ & $g_1\mapsto  g_1^{-1}$, \quad $g_2\mapsto  g_2^{-1}$ \\
\hline
$ G_{10}$ &  $F$ & $g_1\mapsto  g_1^{-1}$, \quad $g_2\mapsto  g_2^{-1}$ \\
\hline
\end{tabular}
\end{table}
Therefore, there exist $4$ non-associative Bruck Loops or order $p^3$, so there are $7$ of them including abelian groups. 
According to the enumeration results in \cite{Petr}, there are $7$ commutative automorphic loops of size $p^3$, so we have the following:
\begin{Pro}\label{bruck loops}
There exist $7$ Bruck loops or order $p^3$ and they are in one to one correspondence with commutative automorphic loops of size $p^3$.\qed
\end{Pro}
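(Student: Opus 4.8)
The plan is to leverage the classification of faithful connected quandles of size $p^3$ carried out in Theorems \ref{principal latin} and \ref{P_alla4}, together with the tables of isomorphism classes, and then simply intersect that list with the condition $\widehat{L_a}^2 = 1$ (equivalently $f^2 = 1$, since involutory latin quandles correspond to those coset quandles $\Q(G,f)$ with $f$ an involution). First I would recall that a latin quandle is involutory precisely when all left multiplications are involutions, and via the minimal coset representation $Q \cong \Q(\dis(Q), \dis(Q)_a, \widehat{L_a})$ this translates into $\widehat{L_a}$ being an involution; since for faithful quandles the point-stabilizer is $Fix(\widehat{L_a})$, one checks this is consistent. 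So the task reduces to scanning, within each family appearing in Tables \ref{Tab2} and \ref{Tab1}, which conjugacy classes $[f]_\sim$ consist of involutions.

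Next I would run through the principal case (Table \ref{Tab2}): in the diagonal family $g_1 \mapsto g_1^\lambda$, $g_2\mapsto g_2^\mu$ one needs $\lambda^2 = \mu^2 = 1$ with $\lambda\mu\neq 1$, forcing $\lambda=\mu=-1$, but then $\lambda\mu = 1$, so this family contributes nothing; in the family $g_1\mapsto g_1^\lambda$, $g_2\mapsto g_1g_2^\lambda$ one needs $\lambda^2=1$ and $\lambda \ne 1$, giving $\lambda = -1$, i.e. exactly the single quandle $g_1\mapsto g_1^{-1}$, $g_2\mapsto g_1g_2^{-1}$; and in the companion-matrix family $g_1\mapsto g_2$, $g_2\mapsto g_1^{-a}g_2^{-b}$ an involution of companion type forces $b = 0$ and $a = -1$, but then $x^2 - 1$ is reducible, so again nothing — wait, one should be careful and also allow $a = 1$: if $b=0$, $a=1$ the map has order $2$ but $x^2+1$ reducible only when $-1$ is a square; in any case the recorded constraints ($a \ne 1$, irreducibility) already exclude all order-$2$ elements of this shape, so the principal case yields exactly one involutory latin quandle, with $\dis(Q)\cong\mathbb{Z}_p^2\rtimes\mathbb{Z}_p$. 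Then I would do the same for the non-principal families over $G_7,\dots,G_{10}$: the matrices $F_{a,s}$, $G_{a,k}$, $F$ squaring to the identity pin down $a = -1$ and $s = k = 0$ (the off-diagonal $\tfrac{a-1}{2}$ vanishing exactly when $a=-1$), and on the remaining generator $g_1 \mapsto g_1^{-1}$ (and $g_2\mapsto g_2^{-1}$ for $i=9,10$) with the auxiliary exponents forced to $0$; this isolates exactly the four rows displayed in Table \ref{Tab3}, one for each of $G_7, G_8, G_9, G_{10}$. Hence there are precisely $4$ non-associative Bruck loops of order $p^3$, and adding the $3$ abelian groups of that order (corresponding to the affine case, i.e. $|Z(\dis(Q))| = p^3$) gives $7$ Bruck loops total.

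Finally I would invoke the cited correspondences: Bruck loops $\leftrightarrow$ involutory latin quandles \cite{Survey}, and for odd order Bruck loops $\leftrightarrow$ Gamma loops $\supseteq$ commutative automorphic loops \cite{MarkG}; combined with the enumeration of commutative automorphic loops of size $p^3$ from \cite{Petr} giving the number $7$, the two counts coincide, and since commutative automorphic loops of odd order are in particular Bruck loops (being Gamma loops), the equinumerosity forces the classes to actually coincide, proving the one-to-one correspondence and answering Problem 8.1 of \cite{Petr2} for $k=3$.

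I expect the main obstacle to be the bookkeeping in the non-principal case: one must be confident that the representatives in Table \ref{Tab1} really do exhaust all conjugacy classes with the required properties, so that imposing $f^2 = 1$ on those representatives (rather than on arbitrary automorphisms) loses nothing — this is where the detailed conjugacy-class computations of the Appendix are needed, and where an error would be easy to make. A secondary subtlety is checking that none of the \emph{affine} quandles of size $p^3$ (the case $|Z(\dis(Q))| = p^3$, classified in \cite{Hou}) is involutory-latin beyond the three coming from abelian groups; but an affine quandle $\aff(A, f)$ is involutory exactly when $f = -\mathrm{id}$, and then $a * b = 2a - b$ realizes $A$ as (essentially) the associated Bruck loop of the abelian group $A$, accounting for exactly the $3$ abelian groups of order $p^3$ and no others.
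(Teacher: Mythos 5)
Your overall strategy is the same as the paper's (scan the classification tables for involutions, add the three abelian groups, and compare with the enumeration of commutative automorphic loops from \cite{Petr}), but your treatment of the principal case contains a genuine error which, taken at face value, breaks the count. The quandle $g_1\mapsto g_1^{-1}$, $g_2\mapsto g_1 g_2^{-1}$ that you single out is row five of Table \ref{Tab2}, which is explicitly marked ``Latin: no'': it belongs to the family of non-faithful principal quandles with $Fix(f)=Z(G)$ (indeed $f$ fixes the central generator $g_3=[g_1,g_2]$, because $[g_1^{-1},g_1g_2^{-1}]=[g_1,g_2]$ in a group of class two), so it is not latin and does not correspond to a Bruck loop; this is exactly why the latin family $g_1\mapsto g_1^\lambda$, $g_2\mapsto g_1g_2^\lambda$ carries the parameter range $\lambda\in\{2,\ldots,p-2\}$, which excludes $\lambda=-1$. (That map is not even an involution: $f^2(g_2)=g_1^{-1}g_2g_1^{-1}\neq g_2$.) The conceptual point is that a non-abelian group admits no fixed-point-free automorphism of order $2$ --- such an automorphism forces the group to be abelian with $f$ the inversion --- so no latin principal quandle over $\mathbb{Z}_p^2\rtimes\mathbb{Z}_p$ is involutory and the principal case contributes nothing. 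With that correction the count is $0+4=4$ non-associative involutory latin quandles, matching Table \ref{Tab3}, and the total of $7$ is restored; as written, your own analysis yields $5+3=8$, and you pass from ``exactly one'' in the principal case to ``precisely $4$'' non-associative Bruck loops without reconciling the two.

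Two smaller points. The off-diagonal entry $\frac{a-1}{2}$ of $F_{a,s}$ does not vanish at $a=-1$ (it equals $-1$); one must actually square $F_{-1,s}$ to see that the surviving condition is $s=0$, so your parenthetical justification is off even though the conclusion is right. And commutative automorphic loops of odd order are not themselves Bruck loops: they form a subclass of Gamma loops, and the bijection of \cite{MarkG} is between Bruck loops and Gamma loops, so what the equality $7=7$ yields is equinumerosity of the two classes (which is all the proposition asserts), not that they coincide as sets of loops.
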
 
Propositon \ref{bruck loops} provides an answer to Problem 8,1 in \cite{Petr2} for every prime $p$ and $k=3$.

\section{Displacement group of latin $p$-quandles}\label{bound for latin}
In order to classify connected quandles of size $p^3$ we first found a bound on the order of the \disp\, group and then we analyzed the conjugacy classes of automorphisms of such groups. This could be a general strategy for the classification of other classes of connected quandles taking advantage of the minimal coset representation. We present a bound on the size of the \disp\, group for latin quandles of prime power size.

Recall that term operations of an algebra $A$ are all the (meaningful) composition of basic operations and that the subalgebra generated by a subset $X\subseteq A$ is given by
\begin{displaymath}
\setof{t\left(x_1,\ldots,x_n\right)}{\left(x_1,\ldots, x_n\right)\in X^n, \, n\in \mathbb{N},\,\text{ $t$ is a $n$-ary term operation} }.
\end{displaymath}

%
%
\begin{Lem}\label{generators}
Let $Q$ be a connected quandle, $\alpha\in Con(Q)$. If $Q/\alpha$ is generated by $[a_1], [a_2],\ldots,[a_n]$ then $Q$ is generated by $[a_1],a_2,\ldots, a_n$. In particular if $[a_1]$ is generated by $b_1,\ldots, b_m$ then $Q$ is generated by $b_1,\ldots, b_m,a_2,\ldots,a_n$.
\end{Lem}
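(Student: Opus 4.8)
The plan is to argue directly from the description of the subalgebra generated by a set, exploiting the fact that the quotient map $\pi\colon Q\to Q/\alpha$ is a surjective quandle homomorphism. Let $S$ be the subquandle of $Q$ generated by $[a_1]\cup\{a_2,\ldots,a_n\}$. First I would observe that $\pi(S)$ is a subquandle of $Q/\alpha$ containing $\pi([a_1])=\{[a_1]\}$ together with $[a_2],\ldots,[a_n]$; since $[a_1],[a_2],\ldots,[a_n]$ generate $Q/\alpha$ by hypothesis, we get $\pi(S)=Q/\alpha$. Hence $S$ meets every $\alpha$-class. Next, $S$ contains the whole class $[a_1]$ by construction, so it suffices to show that $S$ contains every other class entirely. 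Take an arbitrary $b\in Q$; since $S$ is full over the quotient there is $s\in S$ with $s\,\alpha\,b$, i.e.\ $s,b$ lie in the same block. If I can show that $S$ already contains the entire block $[s]_\alpha$ whenever it contains one point of it, then $b\in S$ and we are done, giving $S=Q$.

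For the key step I would use connectedness. Because $Q$ is connected and $S$ contains the full block $[a_1]_\alpha$, it is enough to move that block around by left multiplications coming from $S$: the subgroup $\langle L_x : x\in S\rangle$ acts on $Q$, and it maps subquandles generated by subsets of $S$ into $S$ (since $L_x(y)=x\ast y$ and $L_x^{-1}(y)=x\ldiv y$ are term operations in elements of $S$ when $x,y\in S$). The blocks of $\alpha$ are blocks for the action of $\lmlt{Q}$, so any $h=L_{x_1}^{\pm1}\cdots L_{x_k}^{\pm1}$ with $x_i\in S$ carries a block to a block, and $h([a_1]_\alpha)$ is a block wholly contained in $S$. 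Since $\pi(S)=Q/\alpha$ and $Q/\alpha$ is connected, for every block $[b]_\alpha$ there is such an $h$ (with the $x_i$ taken from a set of representatives lying in $S$, using $\pi(L_{x_i})=L_{[x_i]}$) sending $[a_1]_\alpha$ onto $[b]_\alpha$; therefore $[b]_\alpha\subseteq S$ for every $b$, whence $S=Q$.

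The second assertion is then immediate: applying the first part with $[a_1]_\alpha$ replaced by its own generating set $b_1,\ldots,b_m$, the subquandle generated by $b_1,\ldots,b_m,a_2,\ldots,a_n$ already contains $[a_1]_\alpha$ (it contains a generating set of it, and $[a_1]_\alpha$ is itself a subquandle by Proposition \ref{semiregul groups}(ii) in the faithful case, or more elementarily because $[a_1]_\alpha$ is an $\alpha$-class and $\alpha$ is a congruence so classes are closed under $\ast$ and $\ldiv$ within themselves only after noting idempotency — so one should instead just note $[a_1]_\alpha$ is exactly the subquandle generated by its points together with the fact that $\langle L_x\rangle$-translates stay inside), hence contains $\{a_2,\ldots,a_n\}\cup[a_1]_\alpha$ and generates $Q$ by the first part. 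The main obstacle I anticipate is making the "translate the block by elements of $S$" step fully rigorous: one must be careful that the left multiplications used to move $[a_1]_\alpha$ around are indexed by elements that actually lie in $S$ (so that the result stays inside $S$), which is why the argument should pick block representatives inside $S$ using surjectivity of $\pi|_S$, rather than arbitrary representatives in $Q$.
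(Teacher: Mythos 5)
Your proof is correct and follows essentially the same mechanism as the paper's: move the block $[a_1]_\alpha$ onto every other block by left multiplications indexed by elements of the generated subquandle, using connectedness of $Q/\alpha$ and surjectivity onto the quotient. The only cosmetic difference is that the paper invokes Proposition 4.1 of \cite{CP} to work with the smaller group $\langle L_{a_1},\ldots,L_{a_n}\rangle$, whereas you use all left multiplications by elements of $S$, which is equally valid.
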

\begin{proof}
Using Proposition 4.1 of \cite{CP} it is easy to prove that if $Q/\alpha$ is generated by $[a_1],\ldots,[a_n]$, then for every $[b]\in Q/\alpha$ there exists $h\in \langle L_{a_1},\ldots,L_{a_n}\rangle$ such that $[b]=\pi_\alpha(h)([a_1])$. So, if $c\,\alpha\,b$ then $c\in h([a_1])$ and so $c$ belongs to the subquandle generated by $[a_1],a_2,\ldots a_n$.
\end{proof}

\begin{Cor}\label{generator of p quandle}
Let $Q$ be a connected quandle of size $p^n$. The number of generators of $Q$ is less or equal to $n+1$.
\end{Cor}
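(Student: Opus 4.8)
The plan is to proceed by induction on $n$, using Lemma \ref{generators} to reduce the number of generators of $Q$ to the number of generators of a proper quotient plus the number of generators of a single block. First I would observe that the base case $n=0$ is trivial (a one-element quandle is generated by its unique element, so one generator, which is $\leq 0+1$), and $n=1$ follows since a quandle of size $p$ is generated by any single element by connectedness, giving $1 \leq 1+1$.

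For the inductive step, let $Q$ be a connected quandle of size $p^n$ with $n\geq 2$. Since $Q$ is a connected quandle of prime power order, it is nilpotent (cited after \cite{GiuThe} and \cite[Corollary 6.6]{CP}), so $\gamma_1(Q) \neq 1_Q$; hence there is a nontrivial congruence, and in particular $Q$ admits a congruence $\alpha$ with $1 \leq |Q/\alpha| < p^n$. By Lemma \ref{facts}(i), $|Q/\alpha| = p^k$ and $|[a]_\alpha| = p^{n-k}$ for some $0 < k < n$ (choosing $\alpha$ nontrivial and $\neq 1_Q$; if no such $\alpha$ exists then $Q$ is simple and we treat that separately, see below). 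By the inductive hypothesis applied to the connected quandle $Q/\alpha$ of size $p^k$, it is generated by at most $k+1$ classes $[a_1],\ldots,[a_{k+1}]$; and by the inductive hypothesis applied to the block $[a_1]_\alpha$ — which is a subquandle, but need not be connected — we cannot directly invoke the bound. This is the main obstacle: the block $[a_1]_\alpha$ is not automatically connected, so the inductive hypothesis does not apply to it verbatim.

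To get around this, I would argue as follows. Rather than bounding generators of the block directly, use Lemma \ref{generators} in the form: if $Q/\alpha$ is generated by $[a_1],\ldots,[a_{k+1}]$ and the block $[a_1]_\alpha$ is generated (as a subquandle) by $b_1,\ldots,b_m$, then $Q$ is generated by $b_1,\ldots,b_m,a_2,\ldots,a_{k+1}$, for a total of $m + k$ generators. Since $|[a_1]_\alpha| = p^{n-k}$, the block is generated by at most $n-k$ elements (any subset generates a subalgebra whose size is a power of $p$ dividing $p^{n-k}$, and a subquandle of size $p^{n-k}$ has at most $p^{n-k}$ elements, so certainly at most $n-k$ generators suffice — indeed any quandle of size $p^m$ is generated by at most... here one needs a genuinely elementary fact: a quandle on a set of size $p^m$ is trivially generated by at most $p^m$ elements, but we need $\leq n-k+1$ or better). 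The cleanest route: choose $\alpha$ to be a minimal nontrivial congruence, so that $|[a_1]_\alpha| = p$ (a minimal congruence of a connected $p$-quandle has blocks of size $p$, as used in Proposition \ref{2nilp}); then $[a_1]_\alpha$ is generated by a single element $b_1$, and $Q$ is generated by $b_1, a_2, \ldots, a_{k+1}$ — that is $k \leq (n-1)$ generators, which is $\leq n < n+1$. Wait — we must be careful: $Q/\alpha$ needs $\leq k+1 = (n-1)+1 = n$ generators by induction, and replacing $[a_1]$ by $b_1$ keeps the count at $n$, giving $Q$ generated by $\leq n \leq n+1$ elements.

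Finally, I would dispose of the remaining case where $Q$ has no nontrivial proper congruence, i.e. $Q$ is simple: then by Theorem \ref{classification_simple_quandles} $Q \cong \aff(\mathbb{Z}_p^n, f)$ with $f$ acting irreducibly, and such a quandle is generated by a single element (since $\dis(Q) = \mathbb{Z}_p^n$ acts as $\langle \mathrm{Im}(\partial f)\rangle = [\mathbb{Z}_p^n, f]$, which is $\mathbb{Z}_p^n$ by irreducibility, so the orbit of any point is all of $Q$), hence by one generator $\leq n+1$. The main obstacle, as noted, is handling the block: resolving it by passing to a \emph{minimal} congruence (so blocks have size $p$ and need one generator) makes the induction close cleanly, and the existence of such a minimal congruence is guaranteed by nilpotence of $p$-quandles. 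I should double-check the arithmetic of the induction: with $\alpha$ minimal nontrivial, $|Q/\alpha| = p^{n-1}$, induction gives $\leq (n-1)+1 = n$ generators for the quotient, and Lemma \ref{generators} then gives $\leq n$ generators for $Q$ — comfortably $\leq n+1$. So the bound is in fact not tight via this argument, which is fine.
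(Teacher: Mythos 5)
Your overall strategy (induction on $n$ combined with Lemma \ref{generators}) is the same as the paper's, and you correctly notice a subtlety that the paper's own proof passes over silently, namely that applying the inductive bound to a block $[a]_\alpha$ requires knowing more about the block than its cardinality. However, your workaround fails for two concrete reasons. First, and fatally: quandles are idempotent, so every singleton $\{b\}$ is a subquandle, and consequently \emph{no} quandle with more than one element is generated by a single element. This breaks your base case (a connected quandle of size $p$ needs two generators, not one), your claim that a block of size $p$ is generated by one element $b_1$, and your treatment of the simple case --- there you conflate the orbit of a point under $\dis(Q)$ with the subquandle that point generates; the latter is just the point itself. The paper instead uses that a simple (irreducible affine) quandle is generated by any \emph{pair} of elements. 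Second, the claim that a minimal nontrivial congruence of a connected $p$-quandle has blocks of size $p$ is not what Proposition \ref{2nilp} establishes (there the implication runs the other way: blocks of size $p$ force minimality), and it is false in general --- for instance, for $\aff(\mathbb{Z}_p\oplus\mathbb{Z}_p^2,f)$ with $f$ acting irreducibly on the second summand, the congruence arising from that summand is minimal with blocks of size $p^2$.

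Even granting a congruence with blocks of size $p$, your count must be redone: a connected block of size $p$ contributes $2$ generators, giving $(n-1)+2=n+1$ rather than the $n$ you claim --- still enough, but only once the block is known to be connected (or at least two-generated), which you have not established. The paper's proof avoids the size-$p$ reduction entirely: it takes an arbitrary proper nontrivial congruence $\alpha$ with $|Q/\alpha|=p^k$, applies the inductive bound to both the quotient ($k+1$ generators) and the block ($n-k+1$ generators), and combines them via Lemma \ref{generators} to obtain $(k+1)+(n-k+1)-1=n+1$, handling the simple case separately via two-generation of irreducible affine quandles.
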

\begin{proof}
If $Q$ is simple, then it is affine and it is generated by any pair of elements \cite[Theorem 3.3]{Principal}. Let $|Q|=p^n$, $\alpha\in Con(Q)$ with $|Q/\alpha|=p^k$. By induction, $Q/\alpha$, has at most $k+1$ generators and $[a]_\alpha$ has at most $n-k+1$ generators. By Lemma \ref{generators}, $Q$ has at most $k+1+n-k+1-1=n+1$ generators.
\end{proof}

\begin{The}[Upper bound for the Displacement group of latin p-quandles]
\label{bound}Let $Q$ be a latin quandle of size $p^n$, $n\geq 3$. Then $|\dis(Q)|\leq p^{\frac{n^2+n-4}{2}}$. 
\end{The}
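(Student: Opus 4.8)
The plan is to induct on $n$, using a central congruence to split off a factor and a block, and then to count generators carefully via Corollary \ref{generator of p quandle} and the structural decompositions of Section \ref{central}. First I would set $G=\dis(Q)$ and pass to the center $\zeta_Q=\c{Z(G)}$, which is a nontrivial central congruence since $Q$ is latin (hence faithful and nilpotent, $G$ being a $p$-group). By Lemma \ref{faithful implies latin} the factor $Q/\zeta_Q$ is latin; by Corollary \ref{strucure_of_K_N} the block $B=[a]_{\zeta_Q}$ is an affine latin quandle, so $[a]$ is elementary-abelian of some rank $m$, i.e. $|B|=p^m$ and $\dis_{\zeta_Q}=Z(G)\cong\mathbb{Z}_p^m$. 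Writing $|Q/\zeta_Q|=p^{n-m}$, Lemma \ref{facts}(i) gives $|Q|=|Q/\zeta_Q|\cdot|B|$, consistent with $|Q|=p^n$.

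Next I would estimate $|G|$ from the exact sequence $1\to\dis^{\zeta_Q}\to G\to\dis(Q/\zeta_Q)\to 1$. For the quotient, induction on $n-m$ bounds $|\dis(Q/\zeta_Q)|$ — with the base cases $n-m=1,2$ handled by Theorems \ref{quandles_of_prime_order} and \ref{quandles_of_prime_squared_order} (where the quandle is affine, so the displacement group is regular of order $p^{n-m}$), and $n-m=0$ impossible unless $Q$ is affine (Proposition \ref{order_of_Z=Q_iff_Q_affine}), in which case $|G|=p^n$ trivially satisfies the bound. For the kernel, Corollary \ref{embedding of ker} gives that $\dis^{\zeta_Q}$ embeds into $\dis(B)^{k}$ where $k$ is the number of generators of $Q/\zeta_Q$; since $\dis(B)\cong\mathbb{Z}_p^m$ has order $p^m$ and, by Corollary \ref{generator of p quandle}, $k\le (n-m)+1$, we get $|\dis^{\zeta_Q}|\mid p^{m(n-m+1)}$. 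Combining, $|G|\le p^{\,m(n-m+1)}\cdot p^{\,e(n-m)}$ where $e(j)$ is the inductive bound for a latin $p$-quandle of size $p^j$ (with $e(1)=1$, $e(2)=2$).

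The remaining work is the arithmetic: one must check that for every admissible split $1\le m\le n$ the quantity $m(n-m+1)+e(n-m)$ is at most $e(n):=\frac{n^2+n-4}{2}$, so that the worst case determines the stated bound. I expect this optimization over $m$ to be the main technical obstacle — the exponent $m(n-m+1)$ is concave in $m$ and competes with the inductive term, so one should verify that the maximum is attained at an extreme value of $m$ (presumably $m=1$, i.e. $|Z(G)|=p$, the generic case, paralleling Proposition \ref{Lem nec cond} for $n=3$ where the bound $p^{\frac{9+3-4}{2}}=p^4$ is exactly the non-principal case). One must also confirm the base case $n=3$ directly: by Proposition \ref{Lem nec cond} either $|G|=p^3$ or $|G|=p^4=p^{\frac{3^2+3-4}{2}}$, so the bound holds with equality in the extremal case. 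Care is needed that the inductive hypothesis applies to $Q/\zeta_Q$, which is latin of size $p^{n-m}$ possibly with $n-m<3$; those small cases are exactly the affine ones listed above and must be fed in as the true (smaller) values $p^{n-m}$ rather than the formula $e(n-m)$, and one checks the inequality still closes.
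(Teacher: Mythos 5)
Your proposal follows essentially the same route as the paper's proof: base case $n=3$ via Proposition \ref{Lem nec cond}, the simple (affine) case handled trivially, and induction through $\zeta_Q$ combining Corollary \ref{embedding of ker} with the generator bound of Corollary \ref{generator of p quandle}, then maximizing the resulting exponent over the block size --- the paper's $S(k)$ attains its maximum at $k=n-1$, which is exactly your $m=1$. Your extra care about substituting the true displacement-group orders $p^{n-m}$ when $n-m\le 2$ (where the formula $e(n-m)$ underestimates) addresses a detail the paper's appeal to ``induction'' glosses over, and the inequality does indeed still close there for $n\ge 4$.
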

\begin{proof}
For $n=3$ the formula holds by virtue of Proposition \ref{Lem nec cond}. If $Q$ is simple, then it is affine and $|\dis(Q)|=|Q|$. If $Q$ is not simple, let $|Q/\zeta_Q|=p^k$ with $k<n$. Then
\begin{displaymath}
|\dis(Q)|= |\dis(Q/\zeta_Q)||\dis^{\zeta_Q}|\leq |\dis(Q/\zeta_Q)||[a]_{\zeta_Q}|^{k+1},
\end{displaymath}
by virtue of Corollary \ref{generator of p quandle} and Corollary \ref{embedding of ker}. By induction, $|\dis(Q/\zeta_Q)|\leq p^{\frac{k^2+k-4}{2}}$, so we have:
\begin{equation}\label{function}
|\dis(Q)|\leq S(k)=p^{\frac{k^2+k-4}{2}}p^{(n-k)(k+1)}.
\end{equation}
The maximum of the function $S$ in \eqref{function} on integers values is in $k=n-1$, then $S(n-1)$ provides the desired bound. 
\end{proof}

\begin{example}
The bound in Theorem \ref{bound} can be improved when you specialize to a particular family of $p$-quandles: for instance the bound for nilpotent latin quandles of length $2$ and size $p^4$ can be improved from $p^8$ to $p^6$ employing a case-by-case discussion on the structure of the \disp\, group.
\end{example}

%
%


\section{Appendix}\label{appendix}
In the present Appendix we illustrate the computations that lead us to determine the conjugacy classes of the automorphisms of the relevant groups which give rise to the isomorphism classes listed in Table \ref{Tab2} and Table \ref{Tab1}. To this end, in each case we start with a presentation of the generic automorphism of the group, which can be found in \cite{tedesco}, and then proceed to impose the necessary and sufficient conditions which we have stated in Proposition \ref{Lem nec cond} and the discussion thereof.

\subsection*{Conjugacy classes of automorphisms of $\mathbb{Z}_p^2\rtimes \mathbb{Z}_p$}
In this section we will use the power-commutator presentation of the group $G=\mathbb{Z}_p^2\rtimes \mathbb{Z}_p$ as $$G=\langle g_1, g_2, g_3,\, | \, g_1^p=g_2^p=g_3^p=1, \, [g_1,g_2]=g_3\rangle.$$
The automorphisms of $G$ are given by the mappings
\begin{displaymath}
 f= \left\{
    \begin{array}{l}
g_1\mapsto g_1^a g_2^c  g_3^\alpha\\
g_2\mapsto g_1^b g_2^d g_3^\beta,\\
g_3\mapsto  g_3^{ad-bc}  
    \end{array}\right.
\end{displaymath}
where $a,b,c,d,\alpha,\beta \in \{0,\ldots,p-1\}$ and $ad-bc\neq 0$. We will denote such automorphism as $f=f(a,b,c,d,\alpha,\beta)$.
%
%
%

\begin{Pro}\label{latin principal}
A set of representatives of conjugacy classes of $\aut{G}$ with no fixed points is: \begin{itemize}
\item[(i)] $f_{\lambda, \mu} = f(\lambda, 0,0,\mu,0,0)$, where $\lambda \leq \mu \in \{2,\ldots, p-1\}, \lambda \mu \neq 1$.
\item[(ii)] $h_{\lambda} = f(\lambda,0,1,\lambda,0,0)$,  where $\lambda \in \{2,\ldots, p-2\}$.
\item[(iii)] $f_{p} = f(0, 1,-a,-b,0,0)$, where $p=x^2+bx+a$ is an irreducible polynomial and  $a\neq 1$.
\end{itemize}
The number of such conjugacy classes is $p^2-2p-1$.
\end{Pro}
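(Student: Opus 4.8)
The plan is to classify, up to conjugacy in $\aut{G}$ with $G = \mathbb{Z}_p^2 \rtimes \mathbb{Z}_p$, those automorphisms $f = f(a,b,c,d,\alpha,\beta)$ satisfying $Fix(f) = 1$. The first observation is that $\gamma_1(G) = \langle g_3 \rangle \cong \mathbb{Z}_p$ is characteristic, so $f$ induces an automorphism $\bar f$ on $G/\gamma_1(G) \cong \mathbb{Z}_p^2$ given by the matrix $M = \begin{bmatrix} a & b \\ c & d \end{bmatrix}$, and an automorphism on $\gamma_1(G)$ which is multiplication by $\det M = ad - bc$. Since $g_3 = [g_1, g_2]$, the latter is forced, which is why the presentation has $g_3 \mapsto g_3^{ad-bc}$. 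I would first argue that $Fix(f) = 1$ forces $Fix(\bar f) = 1$: if $\bar f$ fixed a nonzero vector, that vector lifts to a coset $x\gamma_1(G)$ with $f(x) = x g_3^k$ for some $k$, and then one checks that $x^p$ or a suitable power lies in $Fix(f)$ — more carefully, on the cyclic subgroup $\langle x, g_3\rangle$ (which is abelian of order $p^2$ or $p$) the restriction of $f$ has a fixed point since $\det = 1$ there forces it, unless $\det M \neq 1$, in which case one handles $\langle g_3\rangle$ separately. This reduces the problem to: $M \in GL_2(\mathbb{F}_p)$ with $1$ not an eigenvalue, i.e. $\det(M - I) \neq 0$.

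Next I would set up the conjugacy action. Conjugating $f$ by another automorphism $h$ of $G$ acts on the "linear part" $M$ by the usual $GL_2(\mathbb{F}_p)$-conjugation (composed with whatever $h$ does on $G/\gamma_1(G)$), and independently shifts the "$g_3$-components" $(\alpha, \beta)$. The key structural point, which I expect to be the technical heart, is that the $(\alpha,\beta)$ part can \emph{always} be normalized away when $1$ is not an eigenvalue of $M$: the map $(\text{inner-type automorphisms fixing } G/\gamma_1(G)) \to \mathbb{Z}_p^2$ recording the change in $(\alpha,\beta)$ has image equal to $\operatorname{Im}(M - I) = \mathbb{Z}_p^2$ (surjective precisely because $M - I$ is invertible). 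Hence every $f$ with $1 \notin \operatorname{spec}(M)$ is conjugate to one with $\alpha = \beta = 0$, i.e. to the "standard" automorphism determined by $M$ alone. So the classification reduces to conjugacy classes of matrices $M \in GL_2(\mathbb{F}_p)$ with $\det(M - I) \neq 0$, \emph{but} modulo the extra identifications coming from automorphisms $h$ that do not act trivially on $G/\gamma_1(G)$ — here one must check which $GL_2$-conjugations are actually realized by automorphisms of $G$ (all of them are, since any $M \in GL_2$ lifts to an automorphism, e.g. $f(a,b,c,d,0,0)$).

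With that reduction, the remaining work is the rational canonical form over $\mathbb{F}_p$, split into the three familiar cases for a $2\times 2$ matrix with $1$ not an eigenvalue: (i) $M$ diagonalizable with two distinct eigenvalues $\lambda, \mu \in \mathbb{F}_p^\times \setminus \{1\}$ — giving $f_{\lambda,\mu}$, with the ordering $\lambda \le \mu$ to pick one representative per unordered pair, and the condition $\lambda\mu \neq 1$ which I would need to justify comes from... \emph{actually} this last condition is the one subtle point: $\lambda\mu = \det M$, and $\det M = 1$ would make $f$ act trivially on $\gamma_1(G)$; one must check whether that is excluded by faithfulness/non-triviality requirements of the quandle construction or is a genuine restriction — I would trace back through Proposition~\ref{Lem nec cond} and the definition of the quandle to see that $\lambda\mu = 1$ either produces an isomorphic quandle already counted or a degenerate one; (ii) $M$ with a single eigenvalue $\lambda$ and a nontrivial Jordan block — giving $h_\lambda$, with $\lambda \in \{2,\dots,p-2\}$ (excluding $\lambda = 1$ and $\lambda = p-1 = -1$, the latter presumably because $\lambda = -1$ with $\det = 1$ again hits the excluded case); (iii) $M$ with irreducible characteristic polynomial $x^2 + bx + a$ over $\mathbb{F}_p$ — giving $f_p$, with $a = \det M \neq 1$.

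Finally I would do the count: case (i) contributes $\binom{p-2}{2} + (\text{adjustments}) $ — more precisely the number of unordered pairs $\{\lambda,\mu\} \subseteq \{2,\dots,p-1\}$ with $\lambda\mu\neq 1$; case (ii) contributes $p - 3$; case (iii) contributes the number of irreducible monic quadratics $x^2+bx+a$ with $a\neq 1$, which is $\frac{p^2-p}{2}$ total irreducibles minus those with $a = 1$. Summing and simplifying should yield $p^2 - 2p - 1$; the arithmetic is routine but I would double-check the overlap/exclusion bookkeeping (diagonal matrices $\lambda I$ have a single eigenvalue but are semisimple, so they belong to case (i) with $\lambda = \mu$, not case (ii) — this is why case (i) allows $\lambda = \mu$ while case (ii) requires a genuine Jordan block). \textbf{The main obstacle} I anticipate is not the linear algebra but verifying the two "sporadic-looking" side conditions ($\lambda\mu \neq 1$ in (i), $\lambda \in \{2,\dots,p-2\}$ in (ii), $a \neq 1$ in (iii)) are exactly right — i.e. that they correspond precisely to the automorphisms for which $\Q(G,Fix(f),f)$ is a legitimate latin $p^3$-quandle with $\dis(Q) \cong G$ — which requires carefully matching against Proposition~\ref{Lem nec cond}(i) and Theorem~\ref{principal latin} rather than just against $Fix(f) = 1$.
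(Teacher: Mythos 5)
Your overall route is the same as the paper's: pass to the induced matrix $M=f_{\gamma_1(G)}$ on $G/\gamma_1(G)\cong\mathbb{Z}_p^2$, show that the $g_3$-components $(\alpha,\beta)$ can be conjugated away once $1$ is not an eigenvalue of $M$ (the paper does this by exhibiting explicit conjugators $w(1,0,0,1,x,y)$ whose denominators are invertible precisely because of that eigenvalue condition, which is the concrete form of your surjectivity-of-$M-I$ argument), then classify by canonical form of $M$ over $\mathbb{F}_p$ and count. The counting bookkeeping you sketch also agrees with the paper's.

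The one genuine misstep sits exactly where you flag "the main obstacle". The side conditions $\lambda\mu\neq 1$ in (i), $\lambda\in\{2,\dots,p-2\}$ in (ii) and $a\neq 1$ in (iii) have nothing to do with Proposition \ref{Lem nec cond} or with $\Q(G,Fix(f),f)$ being a legitimate latin quandle: the proposition classifies automorphisms with $Fix(f)=1$, full stop, and all three conditions are forced by that hypothesis alone. The point is the one you half-noticed and then set aside: $f$ acts on the characteristic subgroup $\gamma_1(G)=\langle g_3\rangle$ by $g_3\mapsto g_3^{\det M}$, so $\det M=1$ would make $g_3$ a nontrivial fixed point. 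Since $\det M$ equals $\lambda\mu$, $\lambda^2$ and $a$ in the three cases respectively, the side conditions are precisely ``$\det M\neq 1$'' (in case (ii) this excludes $\lambda=-1$ on top of the eigenvalue condition excluding $\lambda=1$). Conversely, $\det M\neq 1$ together with $1\notin\operatorname{spec}(M)$ gives $Fix(f)=1$: a nontrivial fixed point either lies in $\langle g_3\rangle$ (excluded by $\det M\neq 1$) or projects to a nontrivial fixed vector of $M$; and note that your lifting argument should run the other way — when $\det M\neq 1$, a fixed vector $\bar x$ of $M$ lifts to an actual fixed point $xg_3^m$ with $m(\det M-1)=-k$ where $f(x)=xg_3^k$, which is why $Fix(f)=1$ also kills the eigenvalue $1$. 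Tracing the conditions back through the quandle construction, as you propose, would send you down the wrong path: automorphisms with $\lambda\mu=1$ do not give "degenerate or already-counted quandles" — they simply have fixed points and are handled in the next proposition of the appendix.
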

\begin{proof}
Let $f\in \aut{G}$ with no fixed points and let $M=f_{\gamma_1(G)}$ be the matrix of its action on the quotient by $\gamma_1(G)$. Since $f(g_3)=g_3^{ab-cd}$, then $M$ has determinant different from $1$. We can have the following cases:
\begin{itemize}
\item[(i)] $M$ is diagonalizable with eigenvalues $\lambda\leq \mu$ and $\lambda,\mu,\lambda \mu \neq 1$. So there exist $x,y,z,t$ such that $h=h(x,y,z,t,0,0)$ satisfies: 
$$h f h^{-1}=f(\lambda, 0,0,\mu,\delta,\rho)$$
 for some $\delta,\rho$. Conjugating $f(\lambda, 0,0,\mu,\delta,\rho)$ by $w=w(1,0,0,1,x,y)$ with $x=\frac{\delta}{\lambda(1-\mu)}$ and $y=\frac{\rho}{\mu(1-\lambda)}$ we obtain $f_{\lambda,\mu}$. Finally, note that $f_{\lambda,\mu}$ and $f_{\mu,\lambda}$ are conjugate by $w=w(0,1,1,0,0,0)$. 

\item[(ii) ]  $M$ has one eigenvalue $\lambda\neq 1$ of multiplicity $2$ and it is not diagonalizable. So there exist $x,y,z,t$ such that $h=h(x,y,z,t,0,0)$ satisfies: 
$$h f h^{-1}=f(\lambda, 0,1,\lambda,\delta,\rho)$$
 for some $\delta,\rho$. Conjugating $f(\lambda,0,1,\lambda,\delta,\rho)$ by $w=w(1,0,0,1,x,y)$ with $x=\frac{\delta}{\lambda(1-\lambda)}$ and $y=\frac{\rho-x}{\lambda(1-\lambda)}$ we obtain $h_{\lambda}$.

\item[(iii)] $M$ has no eigenvalues. So there exists $x,y,z,t$ such that $h=h(x,y,z,t,0,0)$ satisfies 
$$h f h^{-1}=f(0, 1,-a,-b,\delta,\rho)$$ 
for some $\delta,\rho$ and $a\neq 1$ such that $p=x^{2}+bx+a$ is an irreducible polynomial. Conjugating $f(0, 1,-a,-b,\delta,\rho)$ by $w=w(1,0,0,1,x,y)$ with $x=\frac{a\delta-\rho-b\delta}{a^2-ab+a}=\frac{a\delta-\rho-b\delta}{p(-a)}$ (since $p$ is irreducible $x$ is well defined) and $y=\delta-ax$ we obtain $f_p$.
\end{itemize} 
The automorphisms in the list are not conjugate since the induced automorphisms on $G/\gamma_1(G)$ are not.  There are $\frac{(p-1)(p-3)}{2}$ different $f_{\lambda,\mu}$, $p-3$ different $h_\lambda$ and $\frac{(p-1)^2}{2}$ irreducible polynomial with constant term different from $1$ (since there are $\frac{p-1}{2}$ irreducible polynomials of degree two and constant term equal to $1$ \cite[Theorem 3.5]{pol}). So the number of conjugacy classes is
\begin{displaymath}
\frac{(p-1)(p-3)}{2}+\frac{(p-1)^2}{2}+p-3=(p-1)(p-2)+p-3=p^2-2p-1.
\end{displaymath}

\end{proof}

\begin{Pro}
A set of representatives of conjugacy classes of automorphisms $f$ such that $Fix(f)=Z(G)$ and $Fix(f_{\gamma_1(G)})=1$ is:
\begin{itemize}
\item[(i)] $f_{\lambda} = f(\lambda, 0,0,0,\lambda^{-1},0,0)$, where $ \lambda< \lambda^{-1}, \in \{2,\ldots, p-1\}$.
\item[(ii)] $h_{-1} = f(-1, 1,0,0,-1,0,0)$.
\item[(iii)] $f_{p,1} = f(0, 1,0,-1,-b,0,0)$, where $p= x^2+bx+1$ is an irreducible polynomial.
\end{itemize}
The number of such conjugacy classes is $p$.
%
\end{Pro}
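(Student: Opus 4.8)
The plan is to run the argument of the proof of Proposition~\ref{latin principal} with one extra ingredient, a determinant constraint. Since for $G=\mathbb{Z}_p^2\rtimes\mathbb{Z}_p$ one has $\gamma_1(G)=Z(G)=\langle g_3\rangle$, the quotient $G/\gamma_1(G)$ is a $2$-dimensional $\mathbb{Z}_p$-vector space; write $M=f_{\gamma_1(G)}$ for the induced linear automorphism and recall $f(g_3)=g_3^{\det M}$. The first step is to check that the two hypotheses $Fix(f)=Z(G)$ and $Fix(f_{\gamma_1(G)})=1$ are jointly equivalent to ``$\det M=1$ and $1\notin\operatorname{spec}(M)$''. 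Indeed $Fix(f_{\gamma_1(G)})=1$ says exactly that $1$ is not an eigenvalue of $M$, and under this condition every $f$-fixed element of $G$ projects to $0$ in $G/\gamma_1(G)$, hence lies in $\langle g_3\rangle$; so $Fix(f)=Z(G)$ holds iff $g_3$ is fixed, i.e. $\det M=1$. Conversely every $M\in\mathrm{SL}_2(\mathbb{Z}_p)$ with $1\notin\operatorname{spec}(M)$ is realised by an honest automorphism (take $f(a,b,c,d,0,0)$). So the task is to enumerate, up to conjugacy in $\aut{G}$, the automorphisms whose matrix lies in $\mathrm{SL}_2(\mathbb{Z}_p)$ and misses the eigenvalue $1$.

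Second, as $\gamma_1(G)$ is characteristic, conjugate automorphisms of $G$ induce $\mathrm{GL}_2(\mathbb{Z}_p)$-conjugate automorphisms of $G/\gamma_1(G)$; I would therefore first conjugate $f$ by an automorphism $h=f(x,y,z,t,0,0)$ (one acting trivially on $\langle g_3\rangle$, with $xt-yz\ne0$) so as to put $M$ into rational canonical form, which leaves $f$ of the same shape but changes $\alpha,\beta$ into new exponents $\delta,\rho$. The matrices $M\in\mathrm{SL}_2(\mathbb{Z}_p)$ with $1\notin\operatorname{spec}(M)$ split into four conjugacy types: $\operatorname{diag}(\lambda,\lambda^{-1})$ with $\lambda\notin\{0,\pm1\}$; the scalar $-\mathrm{Id}$; the non-semisimple matrix with repeated eigenvalue $-1$; and the matrices with irreducible characteristic polynomial $x^2+bx+1$. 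The first two are the content of item~(i) (the scalar being the degenerate case $\lambda=\lambda^{-1}=-1$), the third is item~(ii), and the fourth is item~(iii).

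Third, for each canonical $M$ I would remove the residual twist $(\delta,\rho)$ by conjugating once more by $w=f(1,0,0,1,x,y)$, the automorphism fixing $g_3$ with $g_1\mapsto g_1g_3^{x}$ and $g_2\mapsto g_2g_3^{y}$. A direct computation (as in the proof of Proposition~\ref{latin principal}) shows that conjugation by $w$ replaces $(\delta,\rho)$ by $(\delta,\rho)+(M^{\top}-\mathrm{Id})(x,y)^{\top}$. The key point — and the only place the hypothesis $Fix(f_{\gamma_1(G)})=1$ is genuinely needed — is that $M^{\top}-\mathrm{Id}$ is invertible precisely because $1\notin\operatorname{spec}(M)$; hence $(x,y)$ can be chosen so that the twist vanishes, and $f$ is conjugate to one of the stated normal forms $f_\lambda,h_{-1},f_{p,1}$ (the explicit $x,y$ being the obvious analogues of those in Proposition~\ref{latin principal}).

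Finally, counting and irredundancy. Distinct items, and distinct parameters within an item, give non-conjugate automorphisms, since the characteristic polynomial of $M$ (with its Jordan type) is a conjugacy invariant preserved under passage to $G/\gamma_1(G)$; with the reduction above this shows the list is a transversal. Item~(i) contributes the $(p-3)/2$ unordered pairs $\{\lambda,\lambda^{-1}\}$ with $\lambda\notin\{0,\pm1\}$ together with the single scalar class, i.e. $(p-1)/2$ classes; item~(ii) is one class; item~(iii) contributes $(p-1)/2$ classes, this being the number of monic irreducible quadratics over $\mathbb{Z}_p$ with constant term $1$ by \cite[Theorem~3.5]{pol}. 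The total is $(p-1)/2+1+(p-1)/2=p$. There is no serious obstacle here beyond bookkeeping; the points to watch are not to overlook the scalar class $-\mathrm{Id}$ (which forces item~(i) to be read with $\lambda\le\lambda^{-1}$), to make sure the conjugating maps $h$ and $w$ really are automorphisms of $G$, and to keep track of the $g_3$-exponents through the two rounds of conjugation.
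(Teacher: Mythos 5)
Your proposal is correct and follows essentially the same route as the paper, whose proof simply declares itself analogous to Proposition \ref{latin principal} with the extra observation that $\det f_{\gamma_1(G)}=1$; you carry out that analogy explicitly (canonical form for the induced matrix, then killing the $g_3$-twist by a second conjugation), and your identification of the invertibility of $M-\mathrm{Id}$ --- valid here precisely because $\det M=1$ and $1\notin\operatorname{spec}(M)$ --- as the reason the twist can always be removed is the correct reading of the computation the paper leaves implicit. Your bookkeeping, including reading item (i) with $\lambda\le\lambda^{-1}$ so that the scalar $-\mathrm{Id}$ is covered, is exactly what is needed to recover the paper's count of $p$ conjugacy classes.
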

\begin{proof}
The proof is analogous to the proof of Proposition \ref{latin principal}, taking into account that we are now dealing with automorphisms with $Fix(f)=Z(G)$, and so $f_{\gamma_1(G)}$ has determinant $1$.

There are $\frac{p-1}{2}$ different $f_\lambda$ and $\frac{p-1}{2}$ irreducible polynomials of degree two and constant term equal to $1$ \cite[Theorem 3.5]{pol}. Hence there are $p$ conjugacy classes.
\end{proof}
%
%

\subsection*{Conjugacy classes of automorphisms of the relevant groups of order $p^4$ for $p>3$}
\subsubsection*{Case $G=G_7$}
In this section we will use the power-commutator presentation of the group $G_7$ as
\begin{displaymath}
G_7=\langle g_1, g_2, g_3, g_4, \, | \, [g_2,g_1]=g_3, \, [g_3,g_1]=g_4\rangle.
\end{displaymath}
Using an inductive argument and that $g_1^{-a}g_2g_1^a= g_2 g_3^a g_4^{\frac{a(a-1)}{2}}$ and $g_1^{-a}g_3 g_1^a=g_3 g_4^a$, we can prove the following Lemma:
\begin{Lem}\label{normalized reps}
Let $n\in \mathbb{Z}$ $a,b,c\in\{ 0,\ldots , p-1\}$ and $g_1,g_2,g_3\in G$. Then
\begin{equation}
(g_1^a g_2^b g_3^c)^n=g_1^{an} g_2^{nb} g_3^{nc+ab\binom{n}{2}}g_4^{ac\binom{n}{2}+b\rho(a,n)},
\end{equation}
where $\rho(a,n)=\frac{a}{2}\binom{n}{2}(\frac{a(2n-1)}{3}-1)$.\qed
\end{Lem}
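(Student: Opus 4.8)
The plan is to prove the identity by induction on $n\ge 0$, translating the multiplication inside $G_7$ into linear algebra over $\mathbb{Z}_p$, and then to obtain the case $n<0$ by the same device applied to the inverse element.

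First I would isolate the relevant structure of $G=G_7$. The subgroup $M=\langle g_2,g_3,g_4\rangle$ has index $p$ and is normalized by $g_1$; since in the power--commutator presentation of $G_7$ all $p$-th powers of generators and all unlisted commutators are trivial, $M\cong\mathbb{Z}_p^3$ is abelian. Conjugation $x\mapsto g_1^{-1}xg_1$ restricts on $M$ to the unipotent map $g_2\mapsto g_2g_3$, $g_3\mapsto g_3g_4$, $g_4\mapsto g_4$; hence, writing elements of $M$ additively as column vectors in the ordered basis $(g_2,g_3,g_4)$, conjugation by $g_1^a$ acts as the $a$-th power
\[
V^a=\begin{bmatrix} 1&0&0\\ a&1&0\\ \binom{a}{2}&a&1\end{bmatrix}
\]
of the corresponding unipotent matrix $V$. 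This matrix-power identity is equivalent to the two relations $g_1^{-a}g_2g_1^a=g_2g_3^ag_4^{\binom{a}{2}}$ and $g_1^{-a}g_3g_1^a=g_3g_4^a$ recalled before the statement, and is itself a one-line induction on $a$.

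Next, set $m=g_2^bg_3^c\in M$, so that $g_1^ag_2^bg_3^c=g_1^am$, and prove by induction on $n\ge 0$ that
\[
(g_1^am)^n=g_1^{an}\prod_{j=0}^{n-1}\bigl(g_1^{-aj}\,m\,g_1^{aj}\bigr),
\]
the product being unambiguous because all its factors lie in the abelian group $M$; the inductive step simply pushes one more copy of $g_1^a$ to the left, turning each $m$ it crosses into its $g_1^a$-conjugate. Now I would translate the right-hand side into vectors: the $j$-th factor corresponds to $V^{aj}(b,c,0)^{\mathsf{T}}=(b,\ ajb+c,\ \binom{aj}{2}b+ajc)^{\mathsf{T}}$, so summing over $j=0,\dots,n-1$ and using $\sum_{j<n}j=\binom{n}{2}$ gives coordinates $nb$, then $nc+ab\binom{n}{2}$, and finally $ac\binom{n}{2}+b\sum_{j<n}\binom{aj}{2}$, which after multiplication by $g_1^{an}$ is precisely the asserted normal form.

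The last ingredient is the identification of $\sum_{j=0}^{n-1}\binom{aj}{2}$ with $\rho(a,n)$: expanding $\binom{aj}{2}=\tfrac12(a^2j^2-aj)$ and invoking $\sum_{j<n}j^2=\tfrac16(n-1)n(2n-1)=\binom{n}{2}\tfrac{2n-1}{3}$ yields exactly $\tfrac{a}{2}\binom{n}{2}\bigl(\tfrac{a(2n-1)}{3}-1\bigr)$ (a genuine integer, and in any case unambiguous modulo $p$ since $p>3$ throughout this section). For $n<0$ one applies the result to $(g_1^am)^{-1}$, whose normal form is computed in the same way, or notes that both sides obey the recursion $w^{n+1}=w^nw$ fixed by its value at $n=0$. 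The hardest part will be the bookkeeping — justifying the reordering of the conjugates of $m$ (legitimate by commutativity of $M$) and keeping the binomial coefficients straight in the matrix-power and quadratic-sum identities — but there is no conceptual obstacle.
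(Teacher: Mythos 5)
Your proof is correct and follows essentially the same route the paper intends: the paper only sketches its argument as ``an inductive argument'' based on the conjugation relations $g_1^{-a}g_2g_1^a=g_2g_3^ag_4^{\binom{a}{2}}$ and $g_1^{-a}g_3g_1^a=g_3g_4^a$, which is exactly what you carry out, made explicit via the unipotent matrix action on the abelian subgroup $M=\langle g_2,g_3,g_4\rangle$ and the evaluation of $\sum_{j<n}\binom{aj}{2}=\rho(a,n)$. The only point worth recording explicitly is the extension to $n<0$, which your recursion remark handles since both sides satisfy $w^{n+1}=w^nw$ as polynomial identities in $n$.
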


Following \cite{tedesco} and since $g_2,g_3,g_4$ generate an elementary abelian subgroup of rank $3$, which we denote here by $M$, the automorphisms of $G$ are given by the following collection of data:
\begin{small}
 
\begin{displaymath}
h= \left\{
    \begin{array}{l}
H=h|_M=\begin{bmatrix}
x & 0 & 0\\
y & tx& 0\\
z & ty+xt\frac{t-1}{2} & t^2x
\end{bmatrix} ,  \qquad 
g_1\mapsto  g_1^t g_2^{u} g_3^{\alpha} g_4^{\beta}\end{array}\right.,
\end{displaymath}
 \end{small}
where $u,y,z,\alpha,\beta\in \{0,\ldots, p-1\}$ and $x,t\in \{1,\ldots,p-1\}$. A direct computation tells us that $|\aut{G}|=(p-1)^2p^5$. We denote $h$ by $h(t,u,\alpha,\beta,x,y,z)$, and it is straightforward to verify that $h$ satifies \eqref{condition_on_aut} if and only if $t x=1$ and $t\neq 1$.

\begin{Lem}\label{normalized rep}
Let $f=f(a,b,\alpha,\beta,a^{-1},c,d)\in \aut{G}$ satisfying condition \eqref{condition_on_aut} and let $f_{M}$ be the action of $f$ on the quotient by $M$. Then: 
\begin{itemize}
\item[(i)] $f_{M}=a$ is invariant under conjugation. 
\item[(ii)] If $
a\neq 1, p-1$. Then $f$ is conjugate to $f_0=f_0(a,b,\gamma,\delta,a^{-1}0,0)$, i.e.
\begin{small}
\begin{displaymath}
f_0= \left\{
    \begin{array}{l}
 
F_0=\begin{bmatrix}
a^{-1} & 0 & 0\\
0 & 1& 0\\
 0& \frac{a-1}{2} & a
\end{bmatrix} ,\qquad g_1\mapsto  g_1^a g_2^{b} g_3^{\gamma} g_4^{\delta}  \end{array}.\right.
\end{displaymath} 
\end{small}
for some $\gamma,\delta\in \{0,\ldots, p-1\}$.
\end{itemize}

\end{Lem}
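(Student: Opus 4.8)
The plan is to prove the two parts separately: (i) is a formal observation about the characteristic quotient $G/M$, while (ii) is a normalization obtained by conjugating $f$ by a suitable automorphism that fixes $g_1$ and acts unipotently on $M$.

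\emph{Part (i).} Every automorphism of $G$ preserves $M=\langle g_2,g_3,g_4\rangle$ (this is built into the parametrization of $\aut{G}$ recalled above), so $h\mapsto h_M$ is a well-defined homomorphism $\aut{G}\to\aut{G/M}$; since $G/M$ is cyclic of order $p$ generated by the image of $g_1$, the group $\aut{G/M}\cong\mathbb{Z}_p^{\times}$ is abelian. From $f(g_1)=g_1^ag_2^bg_3^\alpha g_4^\beta$ one reads $f_M=a$. If $f'=hfh^{-1}$ in $\aut{G}$ then $f'_M=h_M f_M (h_M)^{-1}=f_M$ because $\aut{G/M}$ is abelian, so $f_M=a$ is a conjugacy invariant.

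\emph{Part (ii).} Here I would conjugate $f=f(a,b,\alpha,\beta,a^{-1},c,d)$ by $h_1=h(1,0,0,0,1,y_1,z_1)$, with parameters $y_1,z_1$ to be chosen. Since $h_1(g_1)=g_1$, conjugation by $h_1$ keeps the $g_1$- and $g_2$-exponents of $f(g_1)$ equal to $a$ and $b$, replaces $\alpha,\beta$ by some $\gamma,\delta$, and replaces the matrix $f|_M$ by $H_1\,(f|_M)\,H_1^{-1}$, where $H_1$ is the lower unitriangular matrix with subdiagonal entries $y_1$ in positions $(2,1)$ and $(3,2)$ and $z_1$ in position $(3,1)$. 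A direct computation gives
\[
\bigl(H_1\,(f|_M)\,H_1^{-1}\bigr)_{21}=c+y_1(a^{-1}-1),
\]
\[
\bigl(H_1\,(f|_M)\,H_1^{-1}\bigr)_{31}=d+z_1(a^{-1}-a)+(\text{a correction depending only on }y_1\text{ and the entries of }f|_M),
\]
so choosing first $y_1$ and then $z_1$ we can make both entries vanish precisely when $a^{-1}-1\neq 0$ and $a^{-1}-a\neq0$, i.e. when $a\neq 1$ and $a^2\neq1$ — which is exactly the hypothesis $a\neq 1,p-1$ (equivalently, the diagonal entries $a^{-1},1,a$ of $f|_M$ are pairwise distinct).

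After this conjugation we obtain an automorphism $f_0$ still satisfying \eqref{condition_on_aut}, with $g_1\mapsto g_1^ag_2^bg_3^\gamma g_4^\delta$ (so $t=a$, $x=a^{-1}$, $u=b$) and with the $(2,1)$- and $(3,1)$-entries of $f_0|_M$ equal to zero, i.e. $y=z=0$. The fixed formula $ty+xt\frac{t-1}{2}$ for the $(3,2)$-entry of the matrix of an automorphism, evaluated at $t=a$, $x=a^{-1}$, $y=0$, then forces that entry to equal $a^{-1}\cdot a\cdot\frac{a-1}{2}=\frac{a-1}{2}$ (using $p>2$); hence $f_0=f_0(a,b,\gamma,\delta,a^{-1},0,0)$ with matrix $F_0$ as displayed. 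The main obstacle is the bookkeeping in the conjugation $H_1\,(f|_M)\,H_1^{-1}$ — or, if one prefers to argue directly on the generators rather than on matrices, tracking the corrections produced by the power identities of Lemma \ref{normalized reps} — and then checking that the resulting equations in $y_1,z_1$ are solvable over $\mathbb{Z}_p$; the hypothesis $a\neq 1,p-1$ enters precisely to invert the coefficients $a^{-1}-1$ and $a^{-1}-a$.
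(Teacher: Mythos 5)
Your proof is correct and follows essentially the same route as the paper: part (i) via the abelianness of $\aut{G/M}\cong\mathbb{Z}_p^{\times}$, and part (ii) by conjugating with an automorphism $h(1,0,0,0,1,y,z)$ fixing $g_1$ and acting unitriangularly on $M$, then solving for $y$ and $z$. The only difference is that the paper writes down the explicit values $y=\frac{ac}{a-1}$ and $z=\frac{2a^2c^2-a^3c+a^2c+2a^2d-2ad}{2(a-1)(a^2-1)}$, whereas you argue solvability of the triangular linear system, correctly locating where the hypothesis $a\neq 1,p-1$ is needed to invert $a^{-1}-1$ and $a^{-1}-a$.
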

\begin{proof}
(i) Let $f$ and $g$ be conjugated. Then $f_{M}$ and $g_{M}$ are conjugate in $\aut{G/M}$. Since $G/M\cong \mathbb{Z}_p$, then $f_M=g_M$. 

(ii) Conjugating $f$ by $h=h(1,0,1,0,0,1,y,z)$ where
\begin{displaymath}
y=\frac{ac}{a-1},\qquad z=\frac{2a^2 c^2-a^3c+a^2c+2a^2d-2ad}{2(a-1)(a^2-1)},
\end{displaymath}
we obtain $f_0$.
\end{proof}

Let $a\neq  1,p-1$. By Lemma \ref{normalized rep}, up to conjugation any automorphism of $G$ satisfying \eqref{condition_on_aut} is completely determined by the image of $g_1$.
\begin{Pro}
A set of representative of conjugacy classes of automorphisms satisfying \eqref{condition_on_aut} with $a\neq 1, p-1$ is given by 
\begin{eqnarray*}
f_a &:& g_1\mapsto g_1^a, \\
 g_a &:& g_1\mapsto g_1^{a}g_2.
\end{eqnarray*}
In particular, there are $2(p-3)$ such conjugacy classes.
\end{Pro}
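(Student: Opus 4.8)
The plan is to finish the normalization started in Lemma~\ref{normalized rep}. By Lemma~\ref{normalized rep}(ii), every automorphism $f$ of $G=G_7$ satisfying \eqref{condition_on_aut} with $a\neq 1,p-1$ is conjugate to an automorphism $f_0(a,b,\gamma,\delta,a^{-1},0,0)$; such an automorphism has the fixed action $F_0$ on $M=\langle g_2,g_3,g_4\rangle$ and is completely determined by the triple $(b,\gamma,\delta)\in\mathbb{F}_p^3$ recording the image $g_1\mapsto g_1^ag_2^bg_3^\gamma g_4^\delta$. So the task is to classify these triples modulo the residual conjugation action, i.e. modulo conjugation by those automorphisms that keep us inside the family $\{f_0(a,\cdot,\cdot,\cdot)\}$.

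I would first pin down the normalizing automorphisms. Conjugating $f_0$ by $h$ yields an automorphism whose $g_1$-exponent is again $a$ (Lemma~\ref{normalized rep}(i)) and whose action on $M$ is $h|_M\,F_0\,(h|_M)^{-1}$, so it stays in the family exactly when $h|_M$ centralizes $F_0$; a short computation with the matrix shape imposed on $h|_M$ by the parametrization of $\aut{G_7}$ shows that, since $F_0$ has the three distinct eigenvalues $a^{-1},1,a$, this only forces the two off-diagonal parameters $y,z$ of $h$ to vanish, leaving $h$ of the form $g_1\mapsto g_1^sg_2^ug_3^\alpha g_4^\beta$ with $s,x\in\mathbb{F}_p^{\times}$ and $u,\alpha,\beta\in\mathbb{F}_p$ free. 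Expanding the relevant powers of $g_1^s g_2^u g_3^\alpha g_4^\beta$ via the formula of Lemma~\ref{normalized reps} and using that $M$ is abelian, I would then compute the induced transformation of $(b,\gamma,\delta)$: it is an affine action on $\mathbb{F}_p^3$ whose linear part is assembled from the scalars $s,x$ and whose translations are generated by two explicit vectors (for the $s=1$ slice these are $\bigl(a-a^{-1},\binom{a}{2},\rho(1,a)\bigr)$ and $\bigl(0,a-1,\tfrac{(a-1)^2}{2}\bigr)$).

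Next I would determine the orbits. The orbit of $(0,0,0)$ is a linear subspace $W\subseteq\mathbb{F}_p^3$; because $a\neq 0,1,-1$ the above translation vectors are linearly independent, so $\dim W=2$, and on the one-dimensional quotient $\mathbb{F}_p^3/W$ the scalar $x$ acts transitively on the nonzero vectors. Hence there are exactly two orbits: $W$ itself, with representative $(0,0,0)$ giving $f_a:g_1\mapsto g_1^a$; and its complement, which contains the triple $(1,0,0)$ attached to $g_a:g_1\mapsto g_1^ag_2$ (one checks $(1,0,0)\notin W$, a computation valid since $p>3$, so that $2$ and $3$ are invertible). The same description gives $f_a\not\sim g_a$: any automorphism conjugating one to the other must centralize $F_0$ on $M$, hence be one of the $h$ above, and its action maps $(0,0,0)$ into $W$, whereas $(1,0,0)\notin W$. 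Finally, two different values of $a$ yield non-conjugate automorphisms because their images on $G_7/\gamma_1(G_7)$ are already non-conjugate (Lemma~\ref{normalized rep}(i)); as $a$ runs over $\{2,\dots,p-2\}$, a set of size $p-3$, this produces the announced count $2(p-3)$.

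The main obstacle is the bookkeeping in the second step: correctly describing the centralizer of $F_0$ among the restrictions $h|_M$ of automorphisms of $G_7$ (this is also where one sees why $a=p-1$ must be excluded — in that case $F_0$ has a repeated eigenvalue, so its centralizer, hence the residual action, is larger and the count changes), and then pushing the commutator and power computations of Lemma~\ref{normalized reps} through cleanly. Once the affine action is in hand, everything reduces to linear algebra over $\mathbb{F}_p$, where the only care needed is with the denominators $2$ and $3$ — which is exactly what the hypothesis $p>3$ provides.
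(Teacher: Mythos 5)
Your proof is correct, but it reaches the conclusion by a genuinely different route than the paper. The paper never describes the orbits explicitly: it computes the centralizer orders $|C_{\aut{G_7}}(f_a)|=p(p-1)^2$ and $|C_{\aut{G_7}}(g_a)|=p(p-1)$, deduces $f_a\not\sim g_a$ because these differ, and closes with a class-equation count, $p^4+(p-1)p^4=p^5$, against the total number of automorphisms satisfying \eqref{condition_on_aut} with the given $a$. You instead exhaust the normalized family of Lemma \ref{normalized rep}(ii), correctly identify the residual conjugating group as the automorphisms with $y=z=0$ (since $a\neq\pm1$ the diagonal of $F_0$ has three distinct entries, so commuting with $F_0$ kills exactly $y$ and $z$), and classify the triples $(b,\gamma,\delta)$ under the resulting affine action. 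Your translation vectors are exactly what Lemma \ref{normalized reps} produces (the $\beta$-direction contributes nothing, which is why $W$ is only a plane), and $(1,0,0)\in W$ would force $a(a-1)(a+1)/12=0$, impossible for $p>3$ and $a\neq 0,\pm1$; so the two-orbit picture and the count $2(p-3)$ both come out right. Two points to tighten if you write this up: first, "the complement of $W$ is a single orbit" tacitly uses that every orbit is a union of $W$-cosets (true, because the unipotent slice realizes all translations by $W$) and that the orbit of $(0,0,0)$ is not larger than $W$ (true, because the diagonal automorphisms $g_1\mapsto g_1^t$ with diagonal action on $M$ centralize $f_a$); second, to separate distinct values of $a$ you must use the induced map on $G_7/M\cong\mathbb{Z}_p$ as in Lemma \ref{normalized rep}(i), not on $G_7/\gamma_1(G_7)$, since on the latter the induced automorphisms are $\operatorname{diag}(a,a^{-1})$ and would not distinguish $f_a$ from $f_{a^{-1}}$. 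The paper's counting is shorter; your orbit analysis buys an explicit normal form identifying, for each automorphism in the family, whether it is conjugate to $f_a$ or to $g_a$.
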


\begin{proof}
We first compute the centralizers of $f_a$ and $g_a$. Let $h=h(t,u,\alpha,\beta,x,y,z)\in C_{\aut{G}}(f_a)$. Then $H$ centralizes $F_a$ and so $y=z=0$. 

If $h=h(t,u,\alpha,\beta,x,0,0)\in C_{\aut{G}}(f_a)$, then $h_{\gamma_1(G)}$ centralizes ${f_a}_{\gamma_1(G)}$ and so $u=0$. Then $h=h(t,0,\alpha,\beta,x,0,0)\in C_{\aut{G}}(f_a)$ if and only if $\alpha=0$.
Accordingly $|C_{\aut{G}}(f_a)|=p(p-1)^2$, since $t,x$ and $\beta$ are free.

We have that $h=h(t,u,\alpha,\beta,x,0,0)\in C_{\aut{G}}(g_a)$, if and only if $t,u,\alpha$ and $x$ are solutions of the following system of linear equations over $\mathbb{Z}_p$:
\begin{displaymath}
\left\{\begin{array}{l} u(a^{2}-1)=a(t-x) \\
\alpha(a-1)=a\binom{t}{2}-tu\binom{a}{2}\\
\alpha(a-1)(1-ta)=2u\rho(t,a)-2\rho(a,t)
   \end{array}\right..
\end{displaymath}

A straightforward calculation shows that $tx=1$ and $u,\alpha$ are uniquely determined by $t$ and $x$. Therefore $|C_{\aut{G}}(g_a)|=p(p-1)$.

Then, $f_a$ and $g_a$ are not conjugate since the sizes of their centralizers are different. The sizes of conjugacy classes of $f_a$ and $g_a$ are respectively $p^4$ and $(p-1)p^4$. The automorphisms of $G$ satisfying \eqref{condition_on_aut} with fixed $a$ are $p^5=p^4+(p-1)p^4$, so $f_a$ and $g_a$ are the representatives of their conjugacy classes. 
\end{proof}

We now discuss the case $a=-1$. 
\begin{Lem}\label{standard rep i=7 a=-1}
Let $f=f(-1,b,\alpha,\beta,-1,c,d)\in \aut{G}$ satisfying condition \eqref{condition_on_aut}. Then: 
\begin{itemize}
\item[(i)]
$f$ is conjugate to 
\begin{small}
\begin{displaymath}
f_0= \left\{
    \begin{array}{l}
F_0=\begin{bmatrix}
-1 & 0 & 0\\
0 & 1& 0\\
 \frac{c(c+1)+2d}{2} & -1 & -1
\end{bmatrix} ,\qquad g_1\mapsto  g_1^{-1} g_2^{b} g_3^{\gamma} g_4^{\delta}  \end{array}.\right.
\end{displaymath} 
\end{small}
for some $\gamma,\delta,\in \{0,\ldots,p-1\}$.  
\item[(ii)] If $g(-1,b,\alpha,\beta,-1,0,s)$ and $h(-1,u,\gamma,\delta,-1,0,s^\prime)$ are conjugate, then $s=s^{\prime}r^2$ for some $r\neq 0$.
\end{itemize}
\end{Lem}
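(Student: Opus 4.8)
The plan is to follow the normalization scheme of Lemma~\ref{normalized rep}, paying attention to the fact that for $a=-1$ the diagonal of the $M$--matrix is $(-1,1,-1)$: the entry of that matrix that carries (the class of) $g_2$ into the characteristic line $Z(G_7)=\langle g_4\rangle$ links two eigenlines with the \emph{same} diagonal eigenvalue $-1$, so conjugation can only rescale it, never kill it. This leftover parameter is exactly the $s$ of part (ii), and the rescaling is exactly what produces the ``up to squares'' statement.

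\textbf{Part (i).} For $f=f(-1,b,\alpha,\beta,-1,c,d)$ the matrix $f|_M$ in the basis $g_2,g_3,g_4$ is lower triangular with diagonal $(-1,1,-1)$, $(2,1)$--entry $c$, $(3,2)$--entry $-c-1$ and $(3,1)$--entry $d$ (these values, and the image $g_1\mapsto g_1^{-1}g_2^{b}g_3^{\alpha}g_4^{\beta}$, are read off from the presentation of $\aut{G_7}$ recalled above and from Lemma~\ref{normalized reps}). I would conjugate $f$ by $h=h(1,0,0,0,1,\tfrac{c}{2},0)$, whose restriction to $M$ is the unipotent lower triangular matrix with both subdiagonal entries equal to $c/2$ and corner entry $0$. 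Computing $(h|_M)(f|_M)(h|_M)^{-1}$, the choice $c/2$ simultaneously kills the $(2,1)$--entry and turns the $(3,2)$--entry into $-1$, while the $(3,1)$--entry becomes $d+\tfrac{c^2+c}{2}=\tfrac{c(c+1)+2d}{2}$; this is the matrix $F_0$ of the statement. Since $h$ fixes $g_1$ (its parameters $u,\alpha,\beta$ vanish), the $g_1$--image of $hfh^{-1}$ is $h(g_1^{-1}g_2^{b}g_3^{\alpha}g_4^{\beta})=g_1^{-1}g_2^{b}g_3^{\gamma}g_4^{\delta}$ for suitable $\gamma,\delta$; the $g_2$--exponent stays $b$ because $M$ is abelian and $h(g_2)\in g_2\langle g_3,g_4\rangle$. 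This yields the asserted normal form, and the only work is the (routine) conjugation of a $3\times 3$ matrix.

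\textbf{Part (ii).} The key object is the subspace $U=\langle g_2,g_4\rangle\le M$. For a normalized $g=g(-1,b,\alpha,\beta,-1,0,s)$ the $(2,1)$--entry of $g|_M$ vanishes, so $g(g_2)=g_2^{-1}g_4^{s}$ and $g(g_4)=g_4^{-1}$; hence $U$ is $g$--invariant and $g|_U=\left(\begin{smallmatrix}-1&0\\ s&-1\end{smallmatrix}\right)$ in the basis $(g_2,g_4)$, and likewise for $h$ with $s$ replaced by $s'$. The decisive claim is that $U$ is the \emph{unique} two--dimensional $h$--invariant subspace $V\le M$ with $V\cap\gamma_1(G_7)=Z(G_7)$: writing $V=Z(G_7)+\langle g_2+\mu g_3\rangle$ and imposing $h$--invariance forces $2\mu=0$, hence $\mu=0$ since $p$ is odd, i.e.\ $V=U$. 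Now let $w\in\aut{G_7}$ satisfy $wgw^{-1}=h$. Then $w(U)$ is a two--dimensional $h$--invariant subspace contained in $M$ and meeting $\gamma_1(G_7)$ exactly in $Z(G_7)$ (all three subgroups being characteristic), so $w(U)=U$ by the claim, and $w|_U$ conjugates $g|_U$ to $h|_U$. By the presentation of $\aut{G_7}$, $w$ scales $g_2$ by some $x\ne 0$ and $g_4$ by $t^2x$, and it fixes the line $\langle g_4\rangle$; so $w|_U$, in the basis $(g_2,g_4)$, is lower triangular with diagonal $(x,t^2x)$, and conjugating $\left(\begin{smallmatrix}-1&0\\ s&-1\end{smallmatrix}\right)$ by such a matrix multiplies the $(2,1)$--entry by $t^2x/x=t^2$. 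Therefore $s'=t^2 s$, i.e.\ $s=s'(t^{-1})^{2}$, which is the claim with $r=t^{-1}$ (when $s=0$ take $r=1$).

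\textbf{Main obstacle.} The genuinely non--formal point is the uniqueness of $U$: without it the conjugation only gives $s'=\lambda s$ for an a priori arbitrary $\lambda\in\mathbb{F}_p^{\times}$, and the restriction to squares collapses. Everything else --- the $3\times 3$ conjugation in (i), with the $\tfrac{a-1}{2}$--type correction terms coming from the commutator identities of Lemma~\ref{normalized reps}, and the $2\times 2$ computation in (ii) --- is routine but bookkeeping--heavy.
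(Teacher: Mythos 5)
Your proof is correct and follows essentially the same route as the paper: in (i) you conjugate by exactly the element $h(1,0,0,0,1,\tfrac{c}{2},0)$ the paper uses, and your matrix computation of the resulting $(3,1)$-entry $\tfrac{c(c+1)+2d}{2}$ checks out. For (ii) the paper simply invokes a direct computation on the restriction to $M$; your version of that computation, organized through the uniqueness of the invariant subspace $U=\langle g_2,g_4\rangle$ (which correctly forces $w(U)=U$ and hence $s'=t^2s$), is a clean and valid way to carry it out.
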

\begin{proof}
(i) Conjugating $f$ by $h=h(1,0,0,0,1,\frac{c}{2},0)$ you obtain $f_0$.

(ii) The second statement follows by direct computations (it is enough to consider the restriction of $h$ and $g$ to $M$).
\end{proof}

According to Lemma \ref{standard rep i=7 a=-1}, every automorphism is conjugate to one of the standard form $g(-1,b,\alpha,\beta,-1,0,s)$. It follows from item (ii) that automorphisms with $s=0$ and $s^\prime\neq 0$ are not conjugate. The following Lemma provides a list of representatives of conjugacy classes in the case $s=0$.
. 
\begin{Pro}
A set of representatives of the conjugacy classes of elements of the form $f=f(-1,b,\alpha,\beta,-1,c,d)\in \aut{G_7}$ satisfying \eqref{condition_on_aut}
with $c(c+1)+2d=0$ is:
\begin{itemize}
\item[(i)]  $\phi=\phi(-1,0,0,0,-1,0,0)$ (whenever $f^2=1$).
\item[(ii)]  $\varepsilon=\varepsilon(-1,0,0,1,-1,0,0)$ (whenever $f_{\gamma_1(G)}=-I$ and $f^2\neq 1$).
\item[(iii)] $\sigma=\sigma(-1,1,0,0,-1,0,0)$ (whenever $f_{\gamma_1(G)}\neq -I$).
\end{itemize}
\end{Pro}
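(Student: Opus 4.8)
The plan is to put every such $f$ into a normal form via Lemma~\ref{standard rep i=7 a=-1}(i), to read off two conjugation invariants that split the situation into the three cases, and then to show the normal forms collapse onto $\phi$, $\varepsilon$ and $\sigma$.

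\emph{Normal form and invariants.} Since $c(c+1)+2d=0$, Lemma~\ref{standard rep i=7 a=-1}(i) gives that $f$ is conjugate to $g_{b,\gamma,\delta}:=g(-1,b,\gamma,\delta,-1,0,0)$, which acts on $M=\langle g_2,g_3,g_4\rangle$ by the matrix $F_0=\left[\begin{smallmatrix}-1&0&0\\0&1&0\\0&-1&-1\end{smallmatrix}\right]$ and by $g_1\mapsto g_1^{-1}g_2^{b}g_3^{\gamma}g_4^{\delta}$. Using $g_1^{-1}g_2g_1=g_2g_3$ and $g_1^{-1}g_3g_1=g_3g_4$ one computes that $g_{b,\gamma,\delta}^{2}$ fixes $g_2,g_3,g_4$ and $g_{b,\gamma,\delta}^{2}(g_1)=g_1g_2^{-2b}g_3^{-b}g_4^{-2(\gamma+\delta)}$; since $p$ is odd, $g_{b,\gamma,\delta}^{2}=1$ exactly when $b=0$ and $\gamma+\delta=0$. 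Moreover $f$ induces on $G/\gamma_1(G)$ the matrix $\left[\begin{smallmatrix}-1&0\\b&-1\end{smallmatrix}\right]$, which equals $-I$ exactly when $b=0$. Both "$f^2=1$" and the $\Aut(G/\gamma_1(G))$-conjugacy class of $f_{\gamma_1(G)}$ are conjugation invariants, and since $-I$ is central in $\mathrm{GL}_2(\mathbb{Z}_p)$ the condition $f_{\gamma_1(G)}=-I$ is preserved under conjugation; also $f^2=1$ forces $b=0$, hence $f_{\gamma_1(G)}=-I$. Thus the three conditions in (i), (ii) and (iii) are mutually exclusive and exhaustive, and $\phi,\varepsilon,\sigma$ lie in them respectively (one checks $\phi^2=1$; $\varepsilon^2(g_1)=g_1g_4^{-2}\neq g_1$ with $\varepsilon_{\gamma_1(G)}=-I$; $\sigma_{\gamma_1(G)}=\left[\begin{smallmatrix}-1&0\\1&-1\end{smallmatrix}\right]\neq -I$), so in particular they are pairwise non-conjugate.

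\emph{Collapsing the normal forms.} It remains to see that each $g_{b,\gamma,\delta}$ is conjugate to the representative of its case. Conjugating by automorphisms whose restriction to $M$ commutes with $F_0$ keeps $g_{b,\gamma,\delta}$ in normal form, and three such families will do: conjugation by $h(1,0,m,q,1,0,0)$ (which fixes $g_2,g_3,g_4$ and sends $g_1\mapsto g_1g_3^{m}g_4^{q}$) sends $(b,\gamma,\delta)\mapsto(b,\gamma-2m,\delta+2m)$; conjugation by $h(1,0,0,0,1,0,z)$ (which fixes $g_1,g_3,g_4$ and sends $g_2\mapsto g_2g_4^{z}$) sends $(b,\gamma,\delta)\mapsto(b,\gamma,\delta+bz)$; and conjugation by $h(1,0,0,0,x,0,0)$ sends $(b,\gamma,\delta)\mapsto(xb,x\gamma,x\delta)$. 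Applying the first with $m=\gamma/2$ reduces to $g_{b,0,\gamma+\delta}$. If $b=0$: either $\gamma+\delta=0$, giving $\phi$, or $\gamma+\delta\neq 0$, and scaling by $x=(\gamma+\delta)^{-1}$ gives $\varepsilon$; by the computation above these are precisely the subcases $f^2=1$ and $f^2\neq 1$ (both with $f_{\gamma_1(G)}=-I$). If $b\neq 0$: the second conjugation with $z=-(\gamma+\delta)/b$ turns $g_{b,0,\gamma+\delta}$ into $g_{b,0,0}$, and then scaling by $x=b^{-1}$ gives $g_{1,0,0}=\sigma$. Hence $\{\phi,\varepsilon,\sigma\}$ is a complete and irredundant set of representatives.

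\emph{Main obstacle.} The computations are elementary but must be done carefully: one has to verify that each automorphism $h$ used restricts to $M$ as a matrix commuting with $F_0$ (so that $hgh^{-1}$ is again of the form $g_{b',\gamma',\delta'}$ and no re-standardization is needed), to compute its effect on $(b,\gamma,\delta)$ from the commutator identities above, and to respect the order of operations, as $\gamma$ must be cleared before $\delta$ is killed since the first family re-injects a $g_4$-term. The only genuinely non-routine point is spotting the second family $h(1,0,0,0,1,0,z)$, which shifts $\delta$ by a multiple of $b$ and so disposes of the case $b\neq 0$ in one step. An alternative, parallel to the other arguments of this Appendix, would be to compute $|C_{\aut{G}}(\phi)|$, $|C_{\aut{G}}(\varepsilon)|$ and $|C_{\aut{G}}(\sigma)|$ and to check that the corresponding conjugacy-class sizes sum to $p^{4}$, the number of automorphisms of the stated shape with $c(c+1)+2d=0$; since these three classes are already known to be distinct, that forces them to exhaust all of them.
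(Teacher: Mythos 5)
Your proposal is correct and takes essentially the same route as the paper's proof: reduce to the normal form $f(-1,b,\gamma,\delta,-1,0,0)$ via Lemma \ref{standard rep i=7 a=-1}(i), separate the three cases by the conjugation invariants ``$f^2=1$'' and ``$f_{\gamma_1(G)}=-I$'' (which also gives pairwise non-conjugacy of $\phi,\varepsilon,\sigma$), and then collapse each normal form onto its representative by explicit conjugations that preserve $F_0$. The only differences are organizational: you factor the conjugating automorphism into three one-parameter families acting on $(b,\gamma,\delta)$ where the paper exhibits a single conjugator per case, and your signs correspond to $hfh^{-1}$ rather than $h^{-1}fh$, which is immaterial since the parameters range over all of $\mathbb{Z}_p$.
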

\begin{proof}
The automorphisms $\phi$ and $\varepsilon$ are not conjugate since they have different orders
and $\sigma$ is not conjugate to $\phi$  nor to $\varepsilon$ since ${\sigma}_{\gamma_1(G)}$ is not conjugate to ${\phi}_{\gamma_1(G)}$ nor to ${\varepsilon}_{\gamma_1(G)}$. According to Lemma \ref{standard rep i=7 a=-1}(i), we can assume that $f=f(-1,b,\alpha,\beta,-1,0,0)\in \aut{G}$.

(i) Let $f=f(-1,b,\alpha,\beta,-1,0,0) \in \aut{G}$ satisfying \eqref{condition_on_aut}. It is easy to verify that $f^2=1$ if and only if $b=0$ and $\alpha+\beta=0$ i.e. 
we can assume that $f=f(-1,0,\alpha,-\alpha,0,0)$. The automorphisms $h=h(1,0,-\frac{\alpha}{2},0,1,0,0)$ fulfils $\phi=h^{-1}fh$. 
%
%

(ii) Since $f$ is not involutory, $\alpha+\beta\neq 0$. Let $h=(1,-\alpha,0,0,\alpha+\beta,0,0)\in \aut{G}$ then $\varepsilon=h^{-1} f h$.


(iii) Let $h=h(1,0,-\frac{\alpha}{2},\frac{\alpha}{2},b,0,\beta)$, then $\sigma=h^{-1} f h$.
\end{proof}

Now we consider the conjugation classes of automorphisms of the form $f=f(-1,b,\alpha,\beta,-1,0,s)$ with $s\neq 0$.

%

\begin{Pro}
Let $w$ be a non quadratic residue modulo p. A set of representatives of conjugacy classes of elements of the form $f=f(-1,b,\alpha,\beta,-1,c,d)\in \aut{G_7}$ satisfying \eqref{condition_on_aut} with $\theta=c(c+1)+2d\neq 0$ is: 
\begin{itemize}
\item[(i)]  $f_1=f_1(-1,0,0,0,-1,0,1)$, (whenever $f_{M}=-I$ and $\theta$ is a quadratic residue module $p$).
\item[(ii)]  $f_w=f_w(-1,0,0,0,-1,0,w)$, (whenever $f_{M}=-I$  and $\theta$ is not a quadratic residue module $p$).
\item[(iii)] $h_{1}=h_{1}(-1,1,0,0,-1,0,1)$, (whenever $f_{M}\neq -I$ and $\theta$ is a quadratic residue module $p$).
\item[(iv)] $h_{w}=h_{w}(-1,1,0,0,-1,0,w)$, (whenever $f_{M}\neq -I$ and $\theta$ is not a quadratic residue module $p$).
\end{itemize}
\end{Pro}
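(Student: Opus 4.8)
The plan is to imitate the proof of the previous proposition (the case $\theta=0$): reduce an arbitrary $f$ of the given shape to a \emph{standard form} by Lemma \ref{standard rep i=7 a=-1}(i), and then split the argument into (a) showing that the four listed automorphisms are pairwise non-conjugate, and (b) showing that every $f$ satisfying \eqref{condition_on_aut} with $\theta\neq 0$ is conjugate to exactly one of them. Throughout I would use, as elsewhere in Section \ref{appendix}, that $\gamma_1(G_7)=\Phi(G_7)=\langle g_3,g_4\rangle$ has index $p^2$ in $G_7$, that $Z(G_7)=\langle g_4\rangle$, and that \eqref{condition_on_aut} forces $t=x=-1$, so that $f$ is completely described by the data $f(-1,b,\alpha,\beta,-1,c,d)$.

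For part (a) I would exhibit two conjugation invariants. The first is the square class of $\theta=c(c+1)+2d$ in $\mathbb{Z}_p^{\ast}/(\mathbb{Z}_p^{\ast})^{2}$: after passing to the standard form (where $c=0$) this is twice the lower-left entry of $f|_M$, and Lemma \ref{standard rep i=7 a=-1}(ii) says exactly that conjugate standard-form automorphisms have lower-left entries that differ by a square; since $1$ and $w$ lie in different square classes this separates the subscript-$1$ representatives from the subscript-$w$ ones. The second invariant is the automorphism induced by $f$ on $G_7/\gamma_1(G_7)\cong\mathbb{Z}_p^{2}$: it equals $-I$ precisely when the $g_2$-exponent $b$ of $f(g_1)$ vanishes, and is a non-diagonalisable matrix with both eigenvalues $-1$ otherwise; this is the alternative recorded as ``$f_M=-I$'' versus ``$f_M\neq -I$'' in the statement, and it separates the $f_i$ (where $b=0$) from the $h_i$ (where $b\neq 0$). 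Together the two invariants distinguish all four automorphisms.

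For part (b) I would start from Lemma \ref{standard rep i=7 a=-1}(i), which puts $f$ in the form $f(-1,b,\gamma,\delta,-1,0,s)$ with $s=\theta/2\neq 0$, and then treat the cases $b=0$ and $b\neq 0$ separately. When $b\neq 0$ I would first rescale in the $\langle g_2,g_3,g_4\rangle$-direction, adjusting the induced action on these generators so as to remain an automorphism and using Lemma \ref{normalized reps} for the power computations, in order to normalise $b=1$. In both cases I would then conjugate by suitable automorphisms that fix $g_2,g_3,g_4$ and move $g_1$ (and possibly $g_2$) inside $\gamma_1(G_7)$ — so that the commutator relations $[g_2,g_1]=g_3$ and $[g_3,g_1]=g_4$ come into play — to bring the image of $g_1$ to $g_1^{-1}$ (resp. $g_1^{-1}g_2$), i.e. to annihilate the exponents $\gamma$ and $\delta$; finally the remaining freedom is a scaling that multiplies $s$ by a nonzero square, which I would use to bring $s$ to $1$ or to $w$ according to its square class, landing on $f_1,f_w,h_1,h_w$ respectively. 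As a cross-check (or an alternative to the explicit normalisation) one could instead compute the four centralisers $C_{\aut{G_7}}(f_1),\ldots,C_{\aut{G_7}}(h_w)$ and verify that the corresponding conjugacy-class sizes add up to the number of automorphisms of $G_7$ satisfying \eqref{condition_on_aut} with $t=-1$ and $\theta\neq 0$, exactly as was done for $f_a$ and $g_a$ in the case $a\neq 1,p-1$.

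The step I expect to be the main obstacle is the bookkeeping in part (b). Because \eqref{condition_on_aut} forces $Fix(f)$ to be a nontrivial subgroup of $\gamma_1(G_7)$, the operator $1-f$ is singular on $\gamma_1(G_7)$, so the $g_3$-exponent of $f(g_1)$ cannot be removed by a naive translation and the conjugating automorphisms must be built from the commutator structure of $G_7$; moreover the normalisations of $b$ and of $s$ interact, since rescaling affects both. Carrying all the exponents correctly through these conjugations, while staying in the regime $p>3$ in which the divisions by $2$ and $3$ appearing in Lemma \ref{normalized reps} and in the matrices $f|_M$ are legitimate and the presentations of \cite{tedesco} apply, is where the real work lies; the rest of the argument is formal.
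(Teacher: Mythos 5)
Your proposal follows essentially the same route as the paper: reduce to the standard form $f(-1,b,\alpha,\beta,-1,0,s)$ via Lemma \ref{standard rep i=7 a=-1}(i), separate the four classes by the two conjugation invariants (the induced action on the quotient, i.e.\ whether $b=0$, and the square class of $s$, which is exactly Lemma \ref{standard rep i=7 a=-1}(ii)), and then kill $\alpha,\beta$ by explicit conjugations and rescale $s$ within its square class. The paper's proof does precisely this, exhibiting the conjugating automorphism with $u=\beta s^{-1}$ and $\gamma=\frac{\alpha+u}{2}$ and the scalings $f_s\mapsto f_{r^2s}$, $h_s\mapsto h_{r^2s}$, so your outline (including your correct observation that the normalisations of $b$ and $s$ interact through the square rescaling) matches the argument given there.
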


\begin{proof}
The automorphisms $f_s$ and $h_s$, for $s=1,w$ are not conjugate since ${f_s}_M$ and ${h_s}_M$ are not conjugate. Moreover, $f_1$ and $f_w$ (resp. $h_1$ and $h_{w}$) are not conjugate, since ${f_{1}}|_{M}$ and ${f_{w}}|_{M}$ (resp. ${h_{1}}|_{M}$ and ${h_{w}}|_{M}$) are not conjugate by Lemma \ref{invariants}(ii).

Let $f_s=f_s(-1,0,0,0,-1,0,s)$ and $h_s=h_s(-1,1,0,0,-1,0,s)$. Every $f=f(-1,b,\alpha,\beta,-1,0,s)$ is conjugate to $f_s$ or to $h_s$ for some $s\neq 0$ by $h=h(t,u,\gamma,\delta,x,y,z)$ where $u=\beta s^{-1}$ and $\gamma	=\frac{\alpha+u}{2}$ i.e. $h f h^{-1}=f_s$ and $h f h^{-1}=h_s$. \\
On the other hand, setting $y=z=0$, $x=t=r$ and $u=\gamma=\delta=0$ we have that $h f_s h^{-1}=f_{r^2s}$. Setting $t=x=r$, $y=z=\beta=0$, $u=\frac{1-r}{2}$ and $\alpha=sr^2u-\rho(-1,r)-r+\frac{ru(r+1)}{2}$ we have $h h_s h^{-1}=h_{r^2s}$. In particular, every $f_s$ (resp. $h_s$) is conjugate to $f_1$ (resp. $h_1$) if $s$ is a quadratic residue modulo $p$ and  $f_s$ (resp. $h_s$) is conjugate to $f_w$ (resp. $h_w$) otherwise. 
%
%
%
\end{proof}

\subsubsection*{Case $G=G_8$}
In this section we will use the power-commutator presentation of the group $G_8$ as
\begin{displaymath}
G_8=\langle g_1, g_2, g_3, g_4, \, | \, [g_2,g_1]=g_3, \, [g_3,g_1]=g_1^p=g_4\rangle.
\end{displaymath}
According to \cite{tedesco} and since $g_2,g_3,g_4$ generate an elementary abelian subgroup of rank $3$ denoted by $M$, the automorphisms of $G_8$ are given by:
\begin{small}
\begin{displaymath}
h= \left\{
    \begin{array}{l}
H=h|_M=\begin{bmatrix}
t^{-1} & 0 & 0\\
y & 1& 0\\
z & ty+\frac{a-1}{2} & t
\end{bmatrix} ,\qquad g_1\mapsto  g_1^t g_2^{u} g_3^{\alpha} g_4^{\beta}  \end{array}.\right.
\end{displaymath} 
\end{small}
where $u,y,z.\alpha,\beta\in \{0,\ldots, p-1\}$ and $t\in \{1,\ldots,p-1\}$. We have that $|\aut{G}|=(p-1)p^5$. We denote $h$ by $h(t,u,\alpha,\beta,y,z)$, and it is straightforward to verify that $h$ satifies \eqref{condition_on_aut} if and only if $t \neq 1$. Note that for every fixed $a$ there exists $p^5$ automorphisms satisfying \eqref{condition_on_aut}.
\begin{Lem}
Let $f=f(a,b,\alpha,\beta,c,d)\in \aut{G_8}$ satisfying condition \eqref{condition_on_aut} . Then
\begin{itemize}
\item[(i)] $f_{M}=a$ is invariant under conjugation. 
\item[(ii)] If $a\neq  1$, $f$ is conjugate to $f_0=f_0(a,b,\gamma,\delta,0,0)$, i.e.
\begin{small}
\begin{displaymath}
f_0= \left\{
    \begin{array}{l}
F_0=\begin{bmatrix}
a^{-1} & 0 & 0\\
0 & 1& 0\\
 g(a,c,d) & \frac{a-1}{2} & a
\end{bmatrix} , \qquad g_1\mapsto  g_1^a g_2^{b} g_3^{\gamma} g_4^{\delta}
  \end{array},\right.
\end{displaymath}
\end{small}
for some $\gamma,\delta$, where $g(a,c,d)$ is a polynomial function of $a,c$ and $d$ such that $g(a,0,0)=0$. 
\end{itemize}
\end{Lem}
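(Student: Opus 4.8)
The plan is to follow the $G_7$ template of Lemma~\ref{normalized rep} and Lemma~\ref{standard rep i=7 a=-1}, adapting the bookkeeping to the extra relation $g_1^p=g_4$ that distinguishes $G_8$. Part~(i) will be immediate: the subgroup $M=\langle g_2,g_3,g_4\rangle$ is characteristic in $G:=G_8$, so every $f\in\aut{G}$ descends to $f_M\in\aut{G/M}$, and conjugate automorphisms of $G$ induce conjugate — hence, as $G/M\cong\langle g_1M\rangle\cong\mathbb{Z}_p$ and $\aut{\mathbb{Z}_p}\cong\mathbb{Z}_p^{\times}$ is abelian, \emph{equal} — automorphisms of $G/M$. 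Since $f(g_1M)=g_1^aM$, that automorphism is multiplication by $a$, so $a$ is a conjugacy invariant.

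For part~(ii) I would conjugate $f$ by an automorphism $h$ that is trivial on $G/M$, i.e.\ of the shape $h=h(1,u',\alpha',\beta',y',z')$, and choose the free parameters to normalize the strictly lower-triangular data of the matrix $f|_M$ together with the image of $g_1$. First I would write out $hfh^{-1}$ on each of $g_1,g_2,g_3$, expanding products with $[g_2,g_1]=g_3$, $[g_3,g_1]=g_1^p=g_4$ and the $p$-th power expansions of $G_8$, to obtain the action of conjugation on the tuple $(a,b,\alpha,\beta,c,d)$. Because $M$ is characteristic, $(hfh^{-1})|_M=h|_M\,f|_M\,(h|_M)^{-1}$, and $f|_M$ depends only on $(a,c,d)$; the $(2,1)$-entry of $f|_M$ — the $g_3$-exponent of the image of $g_2$ — is shifted by conjugation affinely with linear coefficient equal to a unit times $a-1$, so since $a\neq1$ I can solve for $y'$ and annihilate it, which drags the $(3,2)$-entry down to $\tfrac{a-1}{2}$ and leaves the $(3,1)$-entry equal to a polynomial expression, which I then name $g(a,c,d)$; it vanishes at $c=d=0$ since there $f|_M$ is already in the target shape and one may take $h=\mathrm{id}$. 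A final conjugation by an $h$ of the form $h(1,u',\alpha',\beta',0,0)$, which fixes $f|_M$ and only alters the $g_3$- and $g_4$-exponents of $f(g_1)$, then produces the required $\gamma,\delta$, yielding $f_0$.

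The hard part will be the middle step of part~(ii): I must check that the $(2,1)$-entry can be cleared using divisions by $a-1$ only and never by $a+1$ — so that the normal form stays valid at $a=-1$, which is \emph{not} excluded here, unlike in the $G_7$ case — and that the relation $g_1^p=g_4$, which feeds an extra $g_4$-term into the conjugation formulas absent for $G_7$, neither obstructs that clearing nor makes the residual $(3,1)$-entry depend on $b,\alpha,\beta$ (it cannot, since $f|_M$ and the normalizing $h$ both depend only on $(a,c,d)$). Granting this, the remainder is a finite linear-algebra computation over $\mathbb{Z}_p$, carried out just as for Lemma~\ref{normalized rep} and Lemma~\ref{standard rep i=7 a=-1}, with the explicit conjugating parameters read off from the resulting linear systems.
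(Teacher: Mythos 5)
Your proposal is correct and follows essentially the same route as the paper: part (i) is the identical descent to $G/M\cong\mathbb{Z}_p$ with its abelian automorphism group, and part (ii) is the same normalization, conjugating by an automorphism trivial modulo $M$ and solving for the parameter $y$ (the paper takes $y=\frac{ac}{a-1}$, $z=0$), which as you correctly observe requires inverting only $a-1$ and so works for all $a\neq 1$ including $a=-1$. The paper simply records the explicit conjugating element where you defer the final computation, but the structural content — why the $(2,1)$-entry can be cleared, why the residual $(3,1)$-entry is a polynomial in $a,c,d$ vanishing at $c=d=0$ — matches.
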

\begin{proof}
(i) The same argument of \ref{standard rep i=7 a=-1}(i) applies.

(ii) Conjugating $f$ by $h=h(1,0,1,0,y,z)$ where $ y=\frac{ac}{(a-1)}$ and $z=0$ we obtain $f_0$.
%
\end{proof}

\begin{Pro}
Let $a\neq 1,p-1 $. A set of representatives of conjugacy classes of automorphisms of $G_8$ satisfying \eqref{condition_on_aut} is:
\begin{small}
\begin{displaymath}
f_{a,b}= \left\{
    \begin{array}{l}
F_a=\begin{bmatrix}
a^{-1} & 0 & 0\\
0 & 1& 0\\
 0 & \frac{a-1}{2} & a
\end{bmatrix} ,  \quad g_1\mapsto  g_1^{a}g_4^b ,
 \end{array},   \right.
\end{displaymath}
\end{small}
for $b\in \{0,\ldots p-1\}$. In particular, there are $p(p-3)$ such conjugacy classes.
\end{Pro}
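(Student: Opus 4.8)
The plan is to run exactly the scheme used above for $G_7$: reduce an arbitrary such automorphism to a normal form, compute the centraliser of the proposed representatives $f_{a,b}$, and then close by counting.

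\emph{Step 1 (normal form).} By the preceding Lemma, any $f\in\aut{G_8}$ satisfying \eqref{condition_on_aut} with $f_M=a\neq 1$ is conjugate to some $f_0$ whose restriction to $M$ is the matrix $F_0$ (which differs from $F_a$ only in its $(3,1)$-entry, call it $g$) and with $g_1\mapsto g_1^a g_2^b g_3^\gamma g_4^\delta$. First I would conjugate $f_0$ by an automorphism that fixes $g_1$ and whose $M$-matrix agrees with the identity except in the $(3,1)$-entry, the latter chosen so that the resulting correction $z(a^{-1}-a)$ cancels $g$; this is solvable precisely because $a\neq p-1$, i.e. $a^{-1}\neq a$, and it puts the $M$-matrix equal to $F_a$. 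Then I would conjugate successively by the automorphisms $g_1\mapsto g_1 g_2^s$ and $g_1\mapsto g_1 g_3^t$, which fix $M$ pointwise: a direct collection of powers in $G_8$, in the spirit of Lemma~\ref{normalized reps}, shows that the first alters the $g_2$-exponent of the image of $g_1$ by a nonzero multiple of $a^{-1}-a$ (nonzero since $a\neq p-1$) and the second alters its $g_3$-exponent by a nonzero multiple of $1-a$ (nonzero since $a\neq 1$), and that neither re-introduces a cleared component. Hence $f$ is conjugate to $f_{a,b}$, where $b\in\{0,\dots,p-1\}$ is the surviving $g_4$-exponent; this exponent cannot be lowered further, since conjugation by $g_1\mapsto g_1 g_4^r$ (with $g_4$ central) leaves it unchanged.

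\emph{Step 2 (centraliser).} Let $h=h(t,u,\alpha,\beta,y,z)\in C_{\aut{G_8}}(f_{a,b})$. Requiring the $M$-matrix of $h$ to commute with $F_a$ forces $y=0$ (using $a\neq 1$) and $z=0$ (using $a^{2}\neq 1$). Writing out $f_{a,b}(h(g_1))=h(f_{a,b}(g_1))$ and comparing $g_2$- and $g_3$-exponents then forces $u=0$ (using $a^{2}\neq 1$) and $\alpha=0$ (using $a\neq 1$), while the $g_4$-exponent imposes no condition on $\beta$. Thus $C_{\aut{G_8}}(f_{a,b})=\{h(t,0,0,\beta,0,0):t\in\{1,\dots,p-1\}\}$ has order $p(p-1)$, independently of $b$.

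\emph{Step 3 (counting).} There are $p^{5}$ automorphisms of $G_8$ satisfying \eqref{condition_on_aut} with fixed $a$, and by Step 2 the conjugacy class of $f_{a,b}$ has size $|\aut{G_8}|/|C_{\aut{G_8}}(f_{a,b})|=(p-1)p^{5}/(p(p-1))=p^{4}$. By Step 1 the $p$ classes of $f_{a,0},\dots,f_{a,p-1}$ exhaust all $p^{5}$ such automorphisms; since $p\cdot p^{4}=p^{5}$, they are pairwise disjoint, so the $f_{a,b}$ are pairwise non-conjugate and give a complete, irredundant set of representatives. As $a$ runs over $\{2,\dots,p-2\}$ and $b$ over $\{0,\dots,p-1\}$ this yields $p(p-3)$ conjugacy classes.

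\emph{Main obstacle.} The substantive work is the ``collection of powers'' computation underlying Steps 1 and 2 — controlling the corrections produced by $[g_2,g_1]=g_3$, $[g_3,g_1]=g_1^{p}=g_4$ when raising products to $a$-th powers and when conjugating. The one conceptual point to get right is \emph{where} the hypothesis $a\neq p-1$ (equivalently $a^{2}\neq 1$) is used: it is exactly what makes $a^{-1}-a$ invertible, which is needed to clear the $(3,1)$-entry of the $M$-matrix and the $g_2$-component of the image of $g_1$. For $a=p-1$ these are genuine invariants, which is why that case is treated separately (the $G_{-1,k}$ rows of Table~\ref{Tab1}).
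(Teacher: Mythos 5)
Your proposal is correct and rests on the same two pillars as the paper's proof: the centraliser computation $|C_{\aut{G_8}}(f_{a,b})|=p(p-1)$ and the counting identity $p\cdot[\aut{G_8}:C_{\aut{G_8}}(f_{a,b})]=p\cdot p^4=p^5$, the total number of automorphisms satisfying \eqref{condition_on_aut} with fixed $a$. The logic is, however, inverted relative to the paper. The paper first observes directly that $f_{a,b}\sim f_{a,c}$ forces $b=c$ (so the $p$ classes are distinct) and then uses the count to conclude that they exhaust everything; it never reduces a general automorphism all the way to $f_{a,b}$ — its preceding Lemma only normalises to $f_0(a,b,\gamma,\delta,0,0)$ with $b,\gamma,\delta$ still free. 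You instead carry out the full normal-form reduction (clearing the $(3,1)$-entry of the $M$-matrix using $a\neq a^{-1}$, then the $g_2$- and $g_3$-exponents of the image of $g_1$ using $a^{-1}-a\neq 0$ and $1-a\neq 0$), which gives exhaustion, and then deduce distinctness from the count, since $p$ classes of size $p^4$ covering $p^5$ elements must be pairwise disjoint. Both routes are valid; yours is more constructive but buys this at the cost of the extra collection-of-powers computations in Step 1, which you correctly identify as the substantive work and whose stated effects (the corrections $z(a^{-1}-a)$, $s(a^{-1}-a)$, $t(1-a)$, and the fact that the auxiliary conjugators fix $M$ pointwise and do not reintroduce cleared components) check out. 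One caveat: your parenthetical remark that the $g_4$-exponent ``cannot be lowered further'' because conjugation by $g_1\mapsto g_1g_4^r$ fixes it is not by itself a proof of non-conjugacy, but you do not rely on it — distinctness comes from Step 3 — so this is harmless. Note also that in $G_8$ the relation $g_1^p=g_4$ adds terms to the collection formula beyond the $G_7$ version of Lemma \ref{normalized reps}, but these land only in the central $g_4$-exponent, which is exactly the surviving parameter $b$, so your argument is unaffected.
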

\begin{proof}
We compute the centralizer of $f_{a,b}$. We have that $h=h(t,u,\alpha,\beta,y,z)\in C_{\aut{G_8}}(f_{a,b})$ if and only if $\alpha=u=y=z=0$. Hence $|C_{\aut{G_8}}(f_{a,b})|=p(p-1)$. 
It is easy to see that if $f_{a,b}$ and $f_{a,c}$ are conjugate, then $b=c$. Since for every $a$ there are $p^5$ automorphisms of $G$ satisfying \eqref{condition_on_aut} and 
\begin{displaymath}
\sum_{b\in \{ 0,\ldots,p-1\}}[\aut{G}:C(f_{a,b})]=p\frac{(p-1)p^5}{p(p-1)}=p^5,
\end{displaymath}
we are done.
\end{proof}

Now we discuss the case $a=p-1$. 

\begin{Lem}\label{invariants}
Let $f=f(p-1,b,\alpha,\beta,c,d)\in \aut{G}$ satisfying condition \eqref{condition_on_aut}. Then
\begin{itemize}
\item[(i)] $f$ is conjugate to $f_0=f_0(p-1,b,\gamma,\delta,0,\frac{c^2+2d}{2})$, i.e.
\begin{small}
\begin{displaymath}
f_0= \left\{
    \begin{array}{l}
F_0=\begin{bmatrix}
-1 & 0 & 0\\
0 & 1& 0\\
 \frac{c(c+1)+2d}{2} & -1 & -1
\end{bmatrix} ,  \qquad g_1\mapsto  g_1^{-1} g_2^{b} g_3^{\gamma} g_4^{\delta}
 \end{array},\right.
\end{displaymath}  
\end{small}
for some $\gamma,\delta\in \{0,\ldots,p-1\}$.
\item[(ii)] If $g(p-1,b,\alpha,\beta,0,s)$ and $h(p-1,b^\prime,\gamma,\delta,0,s^\prime)$ are conjugate, then $s^{\prime}=sr^2$ and $b^\prime =\frac{b}{r^2}$ for some $r\neq 0$. Therefore $bs=b^\prime s^\prime$.
\end{itemize} 
\end{Lem}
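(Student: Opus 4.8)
The plan is to run the same two moves used for $G_7$ in Lemma \ref{standard rep i=7 a=-1}: a single ``shear'' conjugation to reach the normal form in (i), then a bookkeeping computation of how the surviving parameters transform under an arbitrary conjugation to obtain the semi-invariance in (ii). The standing hypothesis $p>3$ is used so that division by $2$ and by the small scalars below is legitimate. For part (i), I take a generic $f=f(p-1,b,\alpha,\beta,c,d)\in\aut{G_8}$ satisfying \eqref{condition_on_aut} — so $f|_M$ is lower triangular with diagonal $(-1,1,-1)$, sub-diagonal entries $c$ (in position $(2,1)$) and $-c-1$ (in position $(3,2)$), corner entry $d$, and $f(g_1)=g_1^{-1}g_2^bg_3^\alpha g_4^\beta$ — and conjugate by the unipotent $h=h(1,0,0,0,-c/2,0)$, which fixes $g_1$ and acts on $M$ as the shear with parameter $-c/2$. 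Because $h$ fixes $g_1$ and sends $g_2\mapsto g_2g_3^{-c/2}$, the $g_2$-exponent of the image of $g_1$ is unchanged, while on $M$ the shear kills the $(2,1)$-entry and turns the corner entry into the value displayed in $F_0$ (the extra $c$-term in $\tfrac{c(c+1)+2d}{2}$ coming from the $\tfrac{t-1}{2}$-type entry present in the parametrisation of $\aut{G_8}$). This is the asserted normal form $f_0$; the computation is routine once one has the power and commutator identities of $G_8$ (the analogues of $g_1^{-a}g_2g_1^a=g_2g_3^ag_4^{a(a-1)/2}$ and $g_1^{-a}g_3g_1^a=g_3g_4^a$ used for $G_7$), which one uses to expand $h f(\cdot)$ on $g_1$ and on the basis of $M$.

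For part (ii), suppose $g=g(p-1,b,\alpha,\beta,0,s)$ and $h=h(p-1,b',\gamma,\delta,0,s')$ are conjugate by some $\phi=\phi(t,u,\mu,\nu,y,z)\in\aut{G_8}$. First, comparing the $(2,1)$-entries of the $M$-matrices of $\phi g\phi^{-1}$ and of $h$ forces $y=0$, so $\phi|_M$ has diagonal $(t^{-1},1,t)$ with vanishing $(2,1)$-entry. With $y=0$ the corner entry of the $M$-matrix transforms by $s\mapsto t^2 s$ — the exponent $2$ because conjugation by a lower triangular matrix scales a corner entry by the ratio $t/t^{-1}=t^2$ of the outer diagonal entries, and the $z$-terms cancel. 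Dually, expanding $\phi g\phi^{-1}$ on $g_1$ shows the $g_2$-exponent transforms by $b\mapsto t^{-2}b$: the term $-t^{-1}u$ coming from $\phi(g_1)^{-t^{-1}}$ cancels the term $+t^{-1}u$ produced when $\phi$ is applied to the $g_2$-part that $g$ generates out of $g_2^{-u}$, leaving exactly $t^{-2}b$. Setting $r=t$ gives $s'=sr^2$ and $b'=b/r^2$, hence $bs=b's'$.

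The main obstacle is purely computational bookkeeping inside $G_8$: rewriting $\phi^{-1}(g_1)$ as $g_1^{t^{-1}}\cdot(\text{element of }M)$, pushing $g$ through it, re-applying $\phi$, and verifying the two cancellations — the $u$-terms in the $g_2$-exponent and the $z$-terms in the corner entry — that turn the transformation laws into clean powers of $t$; one also has to check along the way that \eqref{condition_on_aut} is preserved by the conjugating automorphisms and that the normal form produced in (i) genuinely has vanishing $y$-parameter. None of this is conceptually hard, but the conclusion $s'=sr^2$, $b'=b/r^2$ (equivalently the invariance of the product $bs$) is precisely what the subsequent separation of conjugacy classes — and the appearance of quadratic residues — rests on, so these cancellations must be carried out carefully rather than asserted.
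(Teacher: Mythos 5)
Your proposal follows essentially the same route as the paper: part (i) by a single unipotent conjugation $h(1,0,0,0,\pm c/2,0)$ that kills the $(2,1)$-entry of $f|_M$ (the paper uses $+c/2$; the sign only reflects whether one computes $hfh^{-1}$ or $h^{-1}fh$), and part (ii) by reading off the transformation of the $(3,1)$-entry of $f|_M$ and of the induced map on $G/\gamma_1(G)$ under a conjugating automorphism with parameter $t=r$, giving $s'=sr^2$ and $b'=b/r^2$ simultaneously. Your extra check that the $(2,1)$-entries force $y=0$ in the conjugator is a detail the paper leaves implicit but is indeed needed; the argument is correct.
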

\begin{proof}
(i) Conjugating $f=f(p-1,b,\alpha,\beta,c,d)$ by $h=h(1,0,0,0,\frac{c}{2},0)$ we obtain $f_0$.

(ii) Let $g(p-1,b,\alpha,\beta,0,s)$ and $h(p-1,b^\prime,\gamma,\delta,0,s^\prime)$ be conjugate. Since $g|_M$ is conjugate to $h|_M$, then there exists $r\neq 0$ such that $s^\prime=s r^2$. Since $g_{\gamma_1(G)}$ and $h_{\gamma_1(G)}$ are conjugate then $b^\prime =\frac{b}{r^2}$.
\end{proof}

Now we consider the classes of automorphisms $f$ such that $f_{\gamma_1(G)}\neq -I$.
\begin{Pro}\label{centralizers i=8 SI}
Let $a=p-1$ and $w$ be a non quadratic residue modulo $p$. A set of representatives of conjugacy classes of automorphisms of $G_8$ satisfying \eqref{condition_on_aut} and such that $f_{\gamma_1(G)}\neq -I$ is:
\begin{small}
\begin{displaymath}
f_{k,s}= \left\{
    \begin{array}{l}
F_{k,s}=\begin{bmatrix}
-1 & 0 & 0\\
0 & 1& 0\\
 k & -1 & -1
\end{bmatrix} ,  \qquad g_1\mapsto  g_1^{p-1} g_2^{s},  
 \end{array}\right.
\end{displaymath}
\end{small}
for $k\in \{0,\ldots p-1\}, s\in \{1,w\}$.  In particular, there are $2p$ such conjugacy classes.
\end{Pro}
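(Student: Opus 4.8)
The statement to prove is that, for $G = G_8$, $a = p-1$, among automorphisms satisfying \eqref{condition_on_aut} with $f_{\gamma_1(G)} \neq -I$, the family $f_{k,s}$ with $k \in \{0,\ldots,p-1\}$, $s\in\{1,w\}$ is a complete and irredundant set of conjugacy class representatives, giving $2p$ classes. The approach is the standard orbit--stabilizer bookkeeping that has been used throughout this appendix: first normalize an arbitrary such automorphism to the standard form from Lemma \ref{invariants}(i), then reduce further to one of the $f_{k,s}$, then prove these are pairwise non-conjugate, and finally do a cardinality count to confirm nothing has been missed.

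First I would invoke Lemma \ref{invariants}(i) to assume $f = f(p-1,b,\gamma,\delta,0,s)$ with $s = \frac{c^2+2d}{2}$. The hypothesis $f_{\gamma_1(G)}\neq -I$ forces $b \neq 0$ (since $f_{\gamma_1(G)}$ acts on the generator images of $g_2,g_3$ and equals $-I$ exactly when $b=0$ in the normalized form). Using Lemma \ref{invariants}(ii), conjugation scales $(b,s)$ to $(b/r^2, sr^2)$, so the invariant $bs$ is preserved; since $b\neq 0$ I can rescale to make $b = s$, landing on an automorphism of the shape $g_1 \mapsto g_1^{p-1}g_2^s$ (the $g_3,g_4$-components being cleared by a further conjugation of the type used in Lemma \ref{invariants}(i), i.e. by an $h$ acting trivially on $G/M$ with suitable $u,\alpha,\beta$). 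Then the value of $k$ in the matrix $F_{k,s}$, which equals $s = \frac{c(c+1)+2d}{2}$ up to the normalization, becomes a free parameter once $b$ is fixed — so the representatives are indexed by $k \in \{0,\ldots,p-1\}$ and $s \in \{1,w\}$, the latter because $s$ is only determined up to multiplication by a square.

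Next I would show the $f_{k,s}$ are pairwise non-conjugate. For distinct $k$: the value $k$ is read off from $f|_M$, and two matrices $F_{k,s}$, $F_{k',s}$ with the same $s$ but different $k$ are not conjugate in $\aut{G}$ — this needs an argument that $k$ (with the fixed central structure) is a conjugacy invariant of the restriction to $M$, analogous to Lemma \ref{invariants}(ii); concretely, conjugating $F_{k,s}$ by the restriction of a generic $h$ changes $k$ only by adding terms that vanish when the $G/M$-part is trivial, and the scaling by $r$ affects $k$ and $s$ compatibly. For $s=1$ versus $s=w$: as in the earlier propositions, $s$ is a square-class invariant, so these are distinct. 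This yields at least $2p$ classes.

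Finally, the cardinality count closes the argument. For fixed $a = p-1$ there are exactly $p^5$ automorphisms satisfying \eqref{condition_on_aut}; of these, those with $f_{\gamma_1(G)} = -I$ (the case $b=0$) form a subset whose size I would compute separately (it is handled in the neighboring propositions), and the complement has size $p^5$ minus that. I would then compute $|C_{\aut{G_8}}(f_{k,s})|$ — expecting it to be $p(p-1)$ or a small multiple, determined by solving the linear system expressing $h \in C(f_{k,s})$ exactly as in Proposition on $f_{a,b}$ — and check that $\sum_{k,s}[\aut{G}:C(f_{k,s})]$ equals the number of automorphisms with $f_{\gamma_1(G)}\neq -I$. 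The main obstacle is this last bookkeeping: pinning down the centralizer order precisely and making sure the "$f_{\gamma_1(G)}\neq -I$" restriction is accounted for correctly on both sides of the count, since an off-by-$p$ error there would invalidate the "$2p$" claim. The normalization steps themselves are routine linear algebra over $\mathbb{Z}_p$ of the kind already executed several times above.
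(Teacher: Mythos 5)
Your proposal follows essentially the same route as the paper's proof: normalize via Lemma \ref{invariants}(i), use Lemma \ref{invariants}(ii) (invariance of the square class of $b$ and of the product $b\cdot\tfrac{c^2+2d}{2}$) to show the $f_{k,s}$ are pairwise non-conjugate, and close with the centralizer/index count against the $p^4(p-1)$ automorphisms having $a=p-1$ and $f_{\gamma_1(G)}\neq -I$; your partially justified direct reduction to the forms $f_{k,s}$ is in any case subsumed by that count. The one deferred detail resolves as $|C_{\aut{G_8}}(f_{k,s})|=2p^2$ rather than $p(p-1)$, so each class has size $\tfrac{(p-1)p^3}{2}$ and the $2p$ classes sum to exactly $p^4(p-1)$, as required.
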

\begin{proof}
Clearly the $f_{k,s}$  are not conjugate by Lemma \ref{invariants} (ii). Let $h=h(t,u,\gamma,\delta,y,z)\in \aut{G}$. Then $h\in C_{\aut{G}}(f_{k,s})$ if and only if 
\begin{align*}
t=1, \quad y=0, \quad z=-2s,\quad \alpha=\frac{u+s}{2},\quad 
\text{or}\quad t=p-1, \quad y=0, \quad z=0,\quad \alpha=\frac{s-u}{2}.
 \end{align*}
 In both cases $u$ and $\beta$ are free. So $|C_{\aut{G_8}}|=2p^2$. Fix $k\in \{0,\ldots, p-1\}$. According to Lemma \ref{invariants}(i), the automorphisms conjugate to $f_{k,1}$ or to $f_{k,w}$ are those for which:
\begin{displaymath}
b \frac{c^2+2d}{2}=k.
\end{displaymath}
So for a given $k$ there are $p^3(p-1)$ many of them. Therefore,
\begin{displaymath}
\sum_{k \in \{0,\ldots p-1 \}} \sum_{s\in \{1,w\}} [\aut{G}:C_{k,s}]=p\left(\frac{p^3(p-1)}{2}+\frac{p^3(p-1)}{2}\right)=p^4(p-1),
\end{displaymath}
 and so $\setof{f_{s,k}}{k\in \{0,\ldots,p-1\}\, , s\in \{1,w\}}$ is a set of representatives.
%

\end{proof}

\begin{Lem}\label{centralizers i=8 not SI}
Let $a=p-1$ and $w$ be a non quadratic residue modulo $p$. A set of representatives of conjugacy classes of automorphisms of $G_8$ satisfying \eqref{condition_on_aut} and such that $f_{\gamma_1(G)}= -I$ and $\frac{c^2+2d}{2}\neq 0$ is:
\begin{small}
\begin{displaymath}
f_{s}= \left\{
    \begin{array}{l}
F_{s}=\begin{bmatrix}
-1 & 0 & 0\\
0 & 1& 0\\
 s & -1 & -1
\end{bmatrix} ,  \qquad g_1\mapsto  g_1^{p-1} ,  
 \end{array}\right.
\end{displaymath}
\end{small}
for $s\in \{1,w\}$.
\end{Lem}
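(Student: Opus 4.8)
The plan is to follow the template of Proposition~\ref{centralizers i=8 SI}: reduce an arbitrary automorphism in the stated class to a normal form via Lemma~\ref{invariants}, and then show that the square class of the one remaining parameter is the only invariant, so that exactly two conjugacy classes survive.

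First I would observe that, since $a=p-1$, the condition $f_{\gamma_1(G)}=-I$ is equivalent to the vanishing of the parameter $b$ (the exponent of $g_2$ in the image of $g_1$), because $f_{\gamma_1(G)}$ is the lower triangular matrix $\left(\begin{smallmatrix} -1 & 0\\ b & -1\end{smallmatrix}\right)$ acting on $G/\gamma_1(G)\cong\mathbb{Z}_p^2$; moreover this condition is preserved under conjugation since $-I$ is central in $GL_2(\mathbb{Z}_p)$. Hence by Lemma~\ref{invariants}(i) every $f$ in the class (i.e.\ every automorphism satisfying \eqref{condition_on_aut} with $a=p-1$, $f_{\gamma_1(G)}=-I$ and $\tfrac{c^2+2d}{2}\neq0$) is conjugate to some $f_0(p-1,0,\gamma,\delta,0,s)$ with $s=\tfrac{c^{2}+2d}{2}\neq 0$, whose restriction to $M$ is the matrix $F_s$ and whose image of $g_1$ is $g_1^{p-1}g_3^{\gamma}g_4^{\delta}$. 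Because $s\neq 0$, the eigenvalue $-1$ of $F_s$ carries a genuine $2\times 2$ Jordan block, so no further degeneration (such as $f^{2}=1$) occurs here, unlike in the excluded case $\tfrac{c^2+2d}{2}=0$.

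Distinctness of $f_1$ and $f_w$ is then immediate from Lemma~\ref{invariants}(ii): a conjugation would force $1=w r^{2}$ for some $r\neq0$, contradicting that $w$ is a non--residue. For surjectivity I would produce, exactly as in the closing paragraph of the $G_7$ discussion, two families of conjugating automorphisms: one with $t=1$ that absorbs $\gamma$ and $\delta$, carrying $f_0(p-1,0,\gamma,\delta,0,s)$ to $f_0(p-1,0,0,0,0,s')$ with $s'$ in the square class of $s$; and one with $t=x=r$ and all remaining slots zero that rescales $s\mapsto r^{2}s$, carrying $f_0(p-1,0,0,0,0,s)$ to $f_s$ (with $g_1\mapsto g_1^{p-1}$). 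Choosing $r$ so that $r^{2}s\in\{1,w\}$ completes the reduction, so $\{f_1,f_w\}$ is a complete, irredundant list of representatives. Alternatively one may rerun the centralizer count of Proposition~\ref{centralizers i=8 SI}: a direct computation gives $\lvert C_{\aut{G_8}}(f_s)\rvert=2p^{2}$, and since the class contains $p^{3}(p-1)$ automorphisms one gets $\sum_{s\in\{1,w\}}[\aut{G_8}:C_{\aut{G_8}}(f_s)]=p^{3}(p-1)$, so the two orbits exhaust the class.

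The step I expect to be the main obstacle is the explicit construction of the conjugator that simultaneously kills $\gamma$ and $\delta$: one must solve a small linear system over $\mathbb{Z}_p$ whose coefficients involve the $g_1$--power corrections of Lemma~\ref{normalized reps} (notably the term $\rho(a,n)$), and then verify by a bookkeeping computation with the relations $[g_2,g_1]=g_3$, $[g_3,g_1]=g_1^{p}=g_4$ that the conjugate is precisely $f_0(p-1,0,0,0,0,s')$ and that the $-I$--block and the square class of $s$ are left untouched. Everything else is routine.
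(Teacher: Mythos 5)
Your proposal is correct and follows essentially the same route as the paper: reduce to the normal form $f_0(p-1,0,\gamma,\delta,0,s)$ via Lemma \ref{invariants}(i), kill the residual parameters $\gamma,\delta$ by an explicit conjugation with $t=1$ (the paper uses $h(1,u,\gamma,0,0,0)$ with $u=\tfrac{\alpha-\beta}{2}$, $\gamma=\tfrac{(1-s)\alpha-\beta}{2s}$), rescale $s$ within its square class by $h(r,0,0,0,0,0)$, and separate $f_1$ from $f_w$ via Lemma \ref{invariants}(ii). The centralizer count you offer as an alternative is consistent but not needed; the paper stops at the explicit conjugators.
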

\begin{proof}
Let $s\neq 0$ and $f=f(p-1,0,\alpha,\beta,0,s)$, $f_s=f(p-1,0,0,0,0,s)$. Conjugating $f$ by $h=h( 1,u,\gamma,0,0,0)$ where $u=\frac{\alpha-\beta}{2}$ and $\gamma=\frac{(1-s)\alpha-\beta}{2s}$ we obtain $f_s$. 
%
 Let $r\neq 0$ and take $h=h(r,0,0,0,0,0)$ then $f_s=h^{-1}f_{sr^2}h$. So, $\setof{f_s}{s\in \{1,w\}}$ is a set of representatives.
\end{proof}
%
%

\begin{Pro}
A set of representatives of conjugacy classes of automorphisms of $G_8$ satisfying \eqref{condition_on_aut} and such that $f_{\gamma_1(G)}= -I$ and $\frac{c^2+2d}{2}= 0$ is:
\begin{small}
\begin{displaymath}
f_{\beta}= \left\{
    \begin{array}{l}
F=\begin{bmatrix}
-1 & 0 & 0\\
0 & 1& 0\\
 0 & -1 & -1
\end{bmatrix} ,  \qquad g_1\mapsto  g_1^{p-1}g_4^\beta , 
 \end{array}\right.
\end{displaymath}
\end{small}
for $ \beta\in \{0,\ldots,p-1\}$. In particular, there are $p$ such conjugacy classes.
\end{Pro}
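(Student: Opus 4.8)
The plan is to follow the pattern of Propositions~\ref{centralizers i=8 SI} and~\ref{centralizers i=8 not SI}: normalise every automorphism of the prescribed form, compute the centraliser of $f_\beta$, and finish by counting. First I would invoke Lemma~\ref{invariants}(i), so that any such $f=f(p-1,b,\alpha,\beta,c,d)\in\aut{G_8}$ is conjugate to one of shape $f_0(p-1,b,\gamma,\delta,0,\tfrac{c^2+2d}{2})$. The hypothesis $f_{\gamma_1(G)}=-I$ forces $b=0$ (hence $a=p-1$): the automorphism induced on $G_8/\gamma_1(G_8)\cong\mathbb{Z}_p^2$ is $\left(\begin{smallmatrix}a&0\\ b&a^{-1}\end{smallmatrix}\right)$, so it equals $-I$ exactly when $a=p-1$ and $b=0$, and since $-I$ is central in $\aut{G_8/\gamma_1(G_8)}$ this is a conjugacy invariant. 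The hypothesis $\tfrac{c^2+2d}{2}=0$ makes the last entry vanish, and is conjugacy-invariant among normal forms by Lemma~\ref{invariants}(ii). Hence $f$ is conjugate to $f_0(p-1,0,\gamma,\delta,0,0)$, whose restriction to $M$ is the matrix $F$ of the statement and which acts by $g_1\mapsto g_1^{p-1}g_3^\gamma g_4^\delta$.

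Second, I would eliminate the exponent $\gamma$. Conjugating $f_0(p-1,0,\gamma,\delta,0,0)$ by $h=h(1,0,\tfrac{\gamma}{2},0,0,0)$ and rewriting the outcome in the normal form $g_1^{\ast}g_2^{\ast}g_3^{\ast}g_4^{\ast}$ by means of $g_1g_3g_1^{-1}=g_3g_4^{-1}$ and $g_1^p=g_4$, one gets $f_0(p-1,0,0,\delta+\gamma,0,0)=f_{\delta+\gamma}$; reading this backwards shows that every $f_\beta$ with $\beta\in\{0,\dots,p-1\}$ occurs. So every automorphism of the prescribed form is conjugate to some $f_\beta$.

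Third, I would compute $C_{\aut{G_8}}(f_\beta)$. Writing $h=h(t,u,\alpha,\beta',y,z)$, the relation $HF=FH$ on $M$ holds precisely when $y=0$, and then for all $t,z$. Imposing in addition that $h$ commute with $f_\beta$ on $g_1$, and normalising both sides with $g_1g_2g_1^{-1}=g_2g_3^{-1}g_4$, $g_1g_3g_1^{-1}=g_3g_4^{-1}$ and the iterate $g_1^{t}g_2g_1^{-t}=g_2g_3^{-t}g_4^{\binom{t+1}{2}}$, reduces --- the $g_4$-coordinate equation turning out to be redundant --- to the single relation $\alpha=\tfrac{tu}{2}$. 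Hence $C_{\aut{G_8}}(f_\beta)=\setof{h(t,u,\tfrac{tu}{2},\beta',0,z)}{t\in\{1,\dots,p-1\},\,u,\beta',z\in\mathbb{Z}_p}$, of order $(p-1)p^3$ and independent of $\beta$, so the conjugacy class of $f_\beta$ has $|\aut{G_8}|/\big((p-1)p^3\big)=p^2$ elements.

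Finally, I would count: the automorphisms of $G_8$ satisfying \eqref{condition_on_aut} with $f_{\gamma_1(G)}=-I$ and $\tfrac{c^2+2d}{2}=0$ are exactly the $f(p-1,0,\alpha,\beta,c,-\tfrac{c^2}{2})$ with $\alpha,\beta,c\in\mathbb{Z}_p$, i.e.\ there are $p^3$ of them. Each is conjugate to one of the $p$ automorphisms $f_\beta$, and each of those conjugacy classes has $p^2$ elements, so the $p$ classes are pairwise distinct and exhaust the set. I expect the one genuinely delicate part to be the commutator bookkeeping in the non-abelian group $G_8$ --- both in killing $\gamma$ and in solving the centraliser system --- where the assumption $p>3$ is used so that $h(g_1)^p=h(g_4)$ holds automatically (the binomial corrections $\binom p2,\binom p3$ vanishing mod $p$); the rest is linear algebra over $\mathbb{Z}_p$.
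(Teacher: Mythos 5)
Your proof is correct and follows essentially the same route as the paper's: compute $|C_{\aut{G_8}}(f_\beta)|=(p-1)p^3$, hence conjugacy classes of size $p^2$, and compare with the $p^3$ automorphisms satisfying the hypotheses. The only (harmless) difference is the direction of the final count: the paper asserts directly that $f_\beta$ conjugate to $f_\gamma$ forces $\beta=\gamma$ and deduces exhaustiveness from the count, whereas you prove exhaustiveness explicitly (normalizing via Lemma \ref{invariants} and then killing the $g_3$-exponent by conjugating with $h(1,0,\tfrac{\gamma}{2},0,0,0)$) and deduce distinctness from the count.
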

\begin{proof}
Let $C_\beta=C_{\aut{G_8}}(f_\beta)$. Then $h=h(r,u,\alpha,\beta,y,z)\in C_\beta$ if and only if $y=0$ and $\alpha=\frac{ru}{2}$. Hence $|C_\beta|=(p-1)p^3$ and then $[\aut{G_8}:C_\beta]=p^2$. Moreover if $f_\beta$ and $f_\gamma$ are conjugate then $\beta=\gamma$. Since there are $p^3$ automorphisms of $G_8$ satisfying the hypothesis then $\setof{f_\beta}{\beta\in \{0,\ldots,p-1\}}$ is a set of representatives.
%
%
%
\end{proof}

\subsubsection*{Case $G=G_9, G_{10}$}

In this section we will use the power-commutator presentation of the group $G_i$, for $i=9,10$ as
\begin{displaymath}
G_i=\langle g_1, g_2, g_3, g_4, \, | \, [g_2,g_1]=g_3, \, [g_3,g_1]=g_2^p=g_4^w\rangle,
\end{displaymath}
where $w$ is either $1$ or a primitive root of unity.

According to \cite[Section 5.7, 5.8]{tedesco} the automorphisms of $G_i$ have the same syntactic description and since $g_3,g_4$ generate an elementary abelian subgroup of rank $2$ they can be represented as
\begin{small}
\begin{displaymath}
h_{\pm}= \left\{
    \begin{array}{l}
g_1\mapsto  g_1^{\pm 1}  g_3^{\alpha} g_4^{\beta}, \qquad
H=\begin{bmatrix}
\pm t & 0\\
\pm u & t 
\end{bmatrix}\\
g_2\mapsto  g_2^{t}  g_3^{u} g_4^{\gamma} ,  \end{array}.\right.
\end{displaymath} 
\end{small} and denoted by $h_{\pm}(\alpha,\beta,t,u,\gamma)$ where $u,\alpha,\beta,\gamma\in \{0,\ldots, p-1\}$ and $t\in \{1,\ldots,p-1\}$. Then $|\aut{G}|=2(p-1)p^4$. It is straightforward to verify that $h_{\pm}$ satisfies \eqref{condition_on_aut} if and only 
\begin{small}
\begin{displaymath}
h_{-}= \left\{
    \begin{array}{l}
g_1\mapsto  g_1^{- 1}  g_3^{\alpha} g_4^{\beta},\qquad H=\begin{bmatrix}
 1 & 0\\
 - u & -1 
\end{bmatrix} ,\\
g_2\mapsto  g_2^{-1}  g_3^{u} g_4^{\gamma}  \end{array}.\right.
\end{displaymath} 
\end{small}
and so there are $p^4$ many of them.
\begin{Lem}\label{representatives}
Let $f\in \aut{G_i}$, for $i=9,10$, satisfying condition \eqref{condition_on_aut}. Then: 
\begin{itemize}
\item[(i)] $f$ is conjugate to $f_0=f_0(\alpha,\beta,-1,0,\gamma)$, i.e.
\begin{small}
\begin{displaymath}
f_0= \left\{
    \begin{array}{l}
g_1\mapsto  g_1^{-1} g_3^{\alpha} g_4^{\beta} ,\qquad F=\begin{bmatrix}
1 & 0 \\
0 & -1
\end{bmatrix} ,\\
g_2 \mapsto g_2^{-1} g_4^\gamma   \end{array}.\right.
\end{displaymath} 
\end{small}
for some $\alpha,\beta, \gamma\in \{0,\ldots,p-1\}$.  
\item[(ii)] If $g(\alpha,\beta,-1,0,\gamma)$ and $h(\epsilon,\rho,-1,0,\delta)$ are conjugate, then $\gamma=\delta$.
\end{itemize}
\end{Lem}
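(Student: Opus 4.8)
The plan is to reduce an arbitrary automorphism $f$ satisfying \eqref{condition_on_aut} to the normalized shape $f_0$ by an explicit conjugation inside $\aut{G_i}$, and then to extract the invariant $\gamma$. First I would record that, by the syntactic description recalled just above, every automorphism satisfying \eqref{condition_on_aut} has the form $h_-(\alpha,\beta,t,u,\gamma)$, so it remains to normalize the parameters $t$ and $u$. Conjugating $f=h_-(\alpha,\beta,t,u,\gamma)$ by a suitably chosen $h_+(\alpha',\beta',t',u',\gamma')$ (note that conjugating by an element with $g_1\mapsto g_1^{+1}$ keeps us in the set of automorphisms of $h_-$-type), I would first use the freedom in $t'$ to rescale the $g_2$-component of $f$ so that the $(1,1)$-entry of the matrix $H$ becomes $t=-1$; then I would use the freedom in $u'$ to kill the off-diagonal entry, i.e. to bring $u$ to $0$. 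The concrete values of $t',u',\alpha',\beta',\gamma'$ achieving this are a routine computation in the group $G_i$, using the commutator relations $[g_2,g_1]=g_3$ and $[g_3,g_1]=g_2^p=g_4^w$, and I would not spell them out beyond indicating the system they solve. This yields part (i): $f$ is conjugate to $f_0=f_0(\alpha,\beta,-1,0,\gamma)$ for some (possibly changed) $\alpha,\beta,\gamma$.

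For part (ii), the idea is that $\gamma$ sits in a canonical quotient and is therefore conjugation-invariant once the matrix part has been normalized. Concretely, the subgroup $\gamma_1(G_i)=\langle g_3,g_4\rangle$ is characteristic, and $Z(G_i)=\langle g_4\rangle$ is the unique normal subgroup of $G_i$ inside $\gamma_1(G_i)$ (this was used already in the proof of Theorem \ref{P_alla4}), so both are preserved by any automorphism. Looking at the action of $f_0(\alpha,\beta,-1,0,\gamma)$ on the quotient $G_i/\langle g_3\rangle$ — or equivalently reading off the $g_4$-exponent in the image of $g_2$ modulo $\langle g_3\rangle$ — one sees that $\gamma$ is exactly the parameter measuring how $f_0$ twists $g_2$ by the center. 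If $g(\alpha,\beta,-1,0,\gamma)$ and $h(\epsilon,\rho,-1,0,\delta)$ are conjugate by some $k\in\aut{G_i}$, then $k$ restricts to an automorphism of $Z(G_i)\cong\mathbb{Z}_p$ and induces one on $G_i/\gamma_1(G_i)$; since the matrix parts of $g$ and $h$ are already the normalized one, comparing the induced maps on the two-step quotient forces the multiplier on $Z(G_i)$ to be trivial, and hence $\gamma=\delta$.

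The main obstacle I expect is part (i): the conjugation that simultaneously normalizes $t$ and $u$ must be checked to be realizable by an element of $\aut{G_i}$ (the parameters are constrained — $t'\neq 0$, and the relation $g_2^p=g_4^w$ couples the $g_2$- and $g_4$-components), and one has to verify that after forcing $t=-1$ the residual freedom in $u'$ really does clear the off-diagonal entry without reintroducing a nontrivial $(1,1)$-entry. Once the correct $h_+(\alpha',\beta',t',u',\gamma')$ is written down, everything is a direct (if slightly tedious) computation using Lemma-style identities for powers and commutators in $G_i$; part (ii) is then essentially bookkeeping with characteristic subgroups. I would present (i) by exhibiting the conjugating automorphism and stating that a direct check gives $f_0$, and (ii) by the invariance argument above.
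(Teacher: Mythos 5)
Your overall strategy (normalize by an explicit conjugation for (i), then argue that $\gamma$ is a conjugation invariant for (ii)) is the same as the paper's, but two of your justifications do not work as stated. For (i): the step in which you ``use the freedom in $t'$ to rescale the $(1,1)$-entry of $H$ to $-1$'' is both unnecessary and impossible. It is unnecessary because condition \eqref{condition_on_aut} already forces the form $h_-$ with $t=-1$, as recorded immediately before the Lemma; and it is impossible because the diagonal entries of $H$ record the induced action on the characteristic quotients $Z(G_i)\cong\mathbb{Z}_p$ and $\gamma_1(G_i)/Z(G_i)\cong\mathbb{Z}_p$, whose automorphism groups are abelian, so these entries are conjugation invariants and cannot be changed. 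Luckily the step is vacuous, and the only genuine normalization is killing $u$, which the paper does by conjugating with $h_+(0,0,1,\tfrac{u}{2},0)$ — consistent with the second half of your plan.

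For (ii) the mechanism you propose is wrong. First, $\langle g_3\rangle$ is not normal in $G_i$ (indeed $[g_3,g_1]=g_4^w\neq 1$), so ``$G_i/\langle g_3\rangle$'' is not a group. Second, the conjugating automorphism is \emph{not} forced to act trivially on $Z(G_i)$: having equal normalized matrix parts only forces $k|_M$ to centralize $\mathrm{diag}(1,-1)$, i.e.\ to be diagonal, and its $(2,2)$-entry $t'$ remains free — the centralizer computation in the Proposition that follows the Lemma exhibits centralizing elements with $t'\neq 1$. The actual reason $\gamma$ is invariant is different: since $g_2^p=g_4^w$, the center satisfies $g_4=g_2^{pw^{-1}}\in\langle g_2\rangle$, so $f_0(g_2)=g_2^{-1+p\gamma w^{-1}}$, and any conjugator multiplies the exponent of $g_2$ and the exponent of $g_4$ by the \emph{same} scalar $t'$; hence the ratio encoded by $\gamma$ is unchanged. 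This is exactly the paper's one-line argument ``look at the image of $g_2$,'' which is a direct computation rather than the quotient/center argument you give. As written, your part (ii) would not survive being carried out in detail.
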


\begin{proof}
(i) Conjugating $f=f(\alpha,\beta,-1,u,\gamma)$ by $h=h_+(0,0,1,\frac{u}{2},0)$ we obtain $f_0$. 

(ii) Let $g=g(\alpha,\beta,-1,0,\gamma)$ and $h=h(\epsilon,\rho,-1,0,\delta)$ be conjugate. By looking at the image of $g_2$ of $g$ and $h$, we have $\delta=\gamma$.
%


\end{proof}

%
%

\begin{Pro}
A set of representatives of conjugacy classes of automorphisms of $G_i$, $i=9,10$, satisfying \eqref{condition_on_aut} is:
\begin{small}
\begin{displaymath}
f_{k, \gamma}= \left\{
    \begin{array}{l}
g_1\mapsto  g_1^{- 1}  g_3^{k}, \qquad 
F=\begin{bmatrix}
1 & 0\\
 0 & -1 
\end{bmatrix} ,  \\
g_2\mapsto  g_2^{-1}   g_4^{\gamma} \end{array}.\right.
\end{displaymath} 
\end{small}
for $\gamma\in \{0,\ldots p-1\}$ and $k=0,1$. In particular there are $2p$ such conjugacy classes.
\end{Pro}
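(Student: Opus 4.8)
The goal is to produce a complete, non-redundant list of conjugacy-class representatives for those $f \in \aut{G_i}$, $i = 9, 10$, satisfying the condition \eqref{condition_on_aut}, and to count them. The strategy follows the template already used for $G_7$ and $G_8$: first reduce an arbitrary such automorphism to a ``normal form'' by a suitable conjugation (this is Lemma \ref{representatives}(i), which I may cite), then within the normal forms identify which ones are genuinely conjugate and which are not, and finally count.

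First I would invoke Lemma \ref{representatives}(i): every $f$ satisfying \eqref{condition_on_aut} is conjugate to one of the shape $f_0 = f_0(\alpha,\beta,-1,0,\gamma)$, i.e.\ with $g_1 \mapsto g_1^{-1} g_3^\alpha g_4^\beta$, $g_2 \mapsto g_2^{-1} g_4^\gamma$, and $H = \mathrm{diag}(1,-1)$ on $\langle g_3,g_4\rangle$. So it suffices to determine conjugacy among the $f_0$'s. By Lemma \ref{representatives}(ii) the parameter $\gamma$ is an invariant of the conjugacy class (conjugate $f_0$'s have the same $\gamma$), so $\gamma \in \{0,\dots,p-1\}$ already splits the representatives into $p$ families. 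Within a fixed $\gamma$, I would conjugate $f_0(\alpha,\beta,-1,0,\gamma)$ by a general $h_{+}(\alpha',\beta',t,u',\gamma')$ (and possibly $h_{-}$) and compute the effect on the image of $g_1$. The key point is that $g_4 = g_2^p$ lies in $Z(G_i)$ and is fixed by conjugation by anything of the form $h_+(\dots)$ acting trivially on the $g_4$-coordinate; meanwhile the $g_3$-coordinate $\alpha$ of $g_1\mapsto g_1^{-1}g_3^\alpha g_4^\beta$ transforms by multiplication by a unit $t \in \{1,\dots,p-1\}$ coming from the scaling on $g_3$. Hence $\alpha$ can be rescaled: if $\alpha \neq 0$ it can be normalized to $1$, and if $\alpha = 0$ it stays $0$. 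That gives the dichotomy $k = 0$ or $k = 1$ in the statement. The $\beta$-coordinate, which is a $g_4 = Z(G_i)$-component, should be absorbable entirely by a suitable choice of the free parameters $\alpha',\gamma'$ in $h_+$ (a linear, solvable condition over $\mathbb{Z}_p$), so $\beta$ contributes nothing new.

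Thus the representatives are exactly the $f_{k,\gamma}$ with $k \in \{0,1\}$ and $\gamma \in \{0,\dots,p-1\}$, and they are pairwise non-conjugate: $\gamma$ is separated by Lemma \ref{representatives}(ii), and within a fixed $\gamma$ the cases $k=0,1$ are distinguished because conjugation can only rescale the $g_3$-coordinate by a unit, so it can never turn $\alpha = 0$ into $\alpha \neq 0$. To close the count I would verify that every admissible $f$ is conjugate to one of these — which is exactly the normalization argument above — giving $2p$ conjugacy classes total, matching $|\aut{G_i}| = 2(p-1)p^4$ and the number $p^4$ of admissible automorphisms via an orbit–stabilizer/summation check as in the $G_8$ proofs (optional, but it provides a sanity check).

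\textbf{Main obstacle.} The genuinely fiddly part is the explicit conjugation computation showing that the $\beta$-coordinate (and any $g_4$-component spuriously generated when conjugating) can always be killed, and that the $g_3$-coordinate transforms purely by a unit scalar with no interference from the other parameters. This requires tracking how $h_{\pm}(\alpha',\beta',t,u',\gamma')$ acts on $g_1 \mapsto g_1^{-1} g_3^\alpha g_4^\beta$, using the group relations $[g_2,g_1]=g_3$, $[g_3,g_1] = g_2^p = g_4^w$ together with a normal-form lemma for powers analogous to Lemma \ref{normalized reps}; the bookkeeping of the $\binom{n}{2}$-type correction terms is where errors creep in. Everything else — the invariance of $\gamma$, the $k \in \{0,1\}$ dichotomy, the final count — is then immediate. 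I would also double-check whether conjugation by $h_{-}$ (rather than only $h_{+}$) identifies any further pairs; I expect not, since $h_-$ changes the sign on $g_2$ but the normal forms already fix $g_2 \mapsto g_2^{-1}g_4^\gamma$, but this case should be dispatched explicitly.
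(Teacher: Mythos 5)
Your overall architecture (normalize via Lemma \ref{representatives}(i), separate $\gamma$ via Lemma \ref{representatives}(ii), then handle the $g_3$-exponent of the image of $g_1$) is reasonable, but the step you yourself flag as the main obstacle is where the argument actually fails: the $g_3$-coordinate $\alpha$ of $g_1\mapsto g_1^{-1}g_3^{\alpha}g_4^{\beta}$ does \emph{not} transform by multiplication by a unit under conjugation, and in particular it is false that ``conjugation can never turn $\alpha=0$ into $\alpha\neq 0$''. Concretely, take $h=h_{+}(a,0,1,0,0)\in\aut{G_i}$, i.e.\ $h(g_1)=g_1g_3^{a}$, $h(g_2)=g_2$, so that $h$ fixes $g_3$ and $g_4$. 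A direct computation using $[g_3,g_1]=g_4^{w}$ gives
\begin{displaymath}
h\,f_{0,\gamma}\,h^{-1}(g_1)=g_1^{-1}g_3^{-2a}g_4^{aw},\qquad h\,f_{0,\gamma}\,h^{-1}(g_2)=g_2^{-1}g_4^{\gamma},
\end{displaymath}
so $f_{0,\gamma}$ is conjugate to a normal-form automorphism with nonzero $g_3$-exponent $-2a$. The exponent transforms affinely (roughly $\alpha\mapsto t\alpha-2a$, with the additive part controlled by the $g_3$-component of $h(g_1)$), and the $g_3$- and $g_4$-coordinates mix; this is also visible in the paper's own centralizer computation, where membership of $h$ in $C_{k,\gamma}$ forces the parameter $\alpha$ of $h$ to equal $-k/2$ rather than being free. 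Consequently both halves of your plan break: the normalization ``$\alpha\neq 0\Rightarrow$ rescale to $1$, kill $\beta$ independently'' is not justified as stated, and your proposed proof that $f_{0,\gamma}\not\sim f_{1,\gamma}$ rests on an invariant that is not an invariant. The class membership is decided by a subtler condition on the pair $(\alpha,\beta)$, not by $\alpha$ alone.

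The paper avoids this trap entirely. It never normalizes a general admissible automorphism to a representative; instead it (a) shows the $f_{k,\gamma}$ are pairwise non-conjugate, using Lemma \ref{representatives}(ii) to separate different $\gamma$ and the \emph{orders of the centralizers} ($2p^2(p-1)$ for $k=0$ versus $2p^2$ for $k=1$) to separate $k=0$ from $k=1$, and then (b) proves completeness by a counting argument: the sum of the class sizes $\sum_{\gamma}\sum_{k}[\aut{G_i}:C_{k,\gamma}]=p\bigl(p^2+p^2(p-1)\bigr)=p^4$ equals the total number of automorphisms satisfying \eqref{condition_on_aut}, so the listed classes exhaust them. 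If you want to salvage your direct approach you would need to identify the true invariant distinguishing the two classes within a fixed $\gamma$ (an orbit condition on $(\alpha,\beta)$), which is considerably more work than the orbit-counting route.
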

\begin{proof}
Let $h_{\pm}=h_{\pm}(\alpha,\beta,x,y,z)$ and $C_{k,\gamma}= C_{\aut{G_i}}(f_{k, \gamma})$. Then
\begin{eqnarray*}
h_{+}\in C_{k,\gamma} &\Leftrightarrow & y=0 ,\quad \alpha=-\frac{k}{2},\quad k(1-2x)=0\\
h_{-}\in C_{k,\gamma} &\Leftrightarrow & y=0 ,\quad \alpha=k, \qquad k(x+1)=0.
 \end{eqnarray*}
 So the size of $C_{k, \gamma}$ is $
2p^2(p-1)$ if $k =0$ and $2p^2$ otherwise.
%
%
%
In particular $f_{k,\gamma}$ and $f_{0,\gamma}$ are not conjugate since their centralizers have different sizes. There are $p^4$ automorphisms satisfying \eqref{condition_on_aut}. By virtue of Lemma \ref{representatives}(ii), $f_{\gamma,k}$ and $f_{\gamma,l}$ are not conjugate if $k\neq l$. Therefore,
\begin{displaymath}
\sum_{\gamma\in \{0,\ldots p-1 \}} \sum_{k=0}^1 [\aut{G}:C_{k,\gamma}]=p (p^2+p^2(p-1))=p^4,
\end{displaymath}
 and the statement follows.
\end{proof}

%
%

\bibliographystyle{amsalpha}
\bibliography{references} 

\end{document}